\newtheorem{theorem}{Theorem}
\newtheorem{conjecture}[theorem]{Conjecture}
\newtheorem{corollary}[theorem]{Corollary}
\theoremstyle{definition}
\newtheorem{example}[theorem]{Example}
\newtheorem*{definition*}{Definition}
\newtheorem*{notation*}{Notation}
\DeclareMathOperator{\adj}{adj}
\DeclareMathOperator{\pos}{pos}
\DeclareMathOperator{\Sink}{Sink}
\newcommand{\Q}{\mathbb{Q}}
\newcommand{\Z}{\mathbb{Z}}
\newcommand{\colonequal}{\mathrel{\mathop:}=}
\newcommand{\tr}{^\textnormal{\textsf{T}}}
\newcommand{\size}[1]{\lvert{#1}\rvert}
\newcommand{\subsetmatrix}[1]{
	\begin{smallmatrix}
		#1
	\end{smallmatrix}
}
\newcommand{\seq}[1]{\cite[\href{http://oeis.org/#1}{#1}]{OEIS}}
\begin{document}

\title[The entries of the Sinkhorn limit of an $m \times n$ matrix]{The entries of the Sinkhorn \\ limit of an $m \times n$ matrix}
\author{Eric Rowland}
\address{
	Department of Mathematics \\
	Hofstra University \\
	Hempstead, NY \\
	USA
}
\author{Jason Wu}
\address{
	Half Hollow Hills High School East \\
	Dix Hills, NY \\
	USA
}
\curraddr{Department of Mathematics \\
	Cornell University \\
	Ithaca, NY \\
	USA
}
\date{May 25, 2025}

\begin{abstract}
We use a variety of computational tools to obtain a degree-$\binom{m + n - 2}{m - 1}$ polynomial equation conjecturally satisfied by the top-left entry of the Sinkhorn limit of a positive $m \times n$ matrix.
The degree of this equation has a combinatorial interpretation as the number of minors of an $(m - 1) \times (n - 1)$ matrix, and the coefficients involve a determinant formula that reflects new combinatorial structure on sets of minor specifications.
The tools we use include Gr\"obner bases, which produce equations for small matrices; the PSLQ algorithm, which produces equations for larger matrices as part of an interpolation effort that required $1.5$~years of CPU time; and ChatGPT o3-mini-high, which identified the signs of the off-diagonal entries in the determinant formula.
\end{abstract}

\maketitle

\section[Introduction]{Introduction\protect\footnotemark}\label{Introduction}

\footnotetext{For a video introduction to this paper, see \url{https://youtu.be/-uIwboK4nwE}.}

In a 1964 paper, Sinkhorn~\cite{Sinkhorn} considered the following iterative scaling process.
Let $A$ be a square matrix with positive entries.
Scale the rows so that each row sum is $1$.
Then scale the columns so that each column sum is $1$; generically, this changes the row sums.
To restore the row sums $1$, scale the rows again, then scale the columns, and so on.
Sinkhorn showed that the sequence of matrices obtained through this process converges to a matrix whose row and column sums are $1$ (in other words, a \emph{doubly stochastic} matrix).
We call this matrix the \emph{Sinkhorn limit} of $A$ and denote it $\Sink(A)$.
Sinkhorn also showed that $\Sink(A)$ is the unique doubly stochastic matrix $S$ with the same dimensions as $A$ such that $S = D_1 A D_2$ for some diagonal matrices $D_1$ and $D_2$ with positive diagonal entries.
Here $D_1$ can be taken to be the product of the row-scaling matrices from the iterative scaling process and $D_2$ the product of the column-scaling matrices.

The literature on questions related to the iterative scaling process is large;
Idel's extensive survey~\cite{Idel} covers results as of 2016.
Mathematical applications include preconditioning linear systems to improve numerical stability, approximating the permanent of a matrix, and determining whether a graph has a perfect matching.
In a number of other areas, iterative scaling was discovered independently~\cite{Kruithof, Deming--Stephan, Brown}, and it is used in machine learning to efficiently compute optimal transport distances~\cite{Cuturi}.
As a result of its ubiquity and importance, many authors have been interested in fast algorithms for approximating Sinkhorn limits numerically~\cite{Franklin--Lorenz, Kalantari--Khachiyan 1, Kalantari--Khachiyan 2, Linial--Samorodnitsky--Wigderson, Kalantari--Lari--Ricca--Simeone, Allen-Zhu--Li--Oliveira--Wigderson, Cohen--Madry--Tsipras--Vladu}.

However, until recently, nothing was known about the exact values of the entries of Sinkhorn limits.
For a $2 \times 2$ matrix
\[
	A =
	\begin{bmatrix}
		a & b \\
		c & d
	\end{bmatrix}
\]
with positive entries, Nathanson~\cite[Theorem~3]{Nathanson 2x2} showed that
\[
	\Sink(A) =
	\frac{1}{\sqrt{a d} + \sqrt{b c}} \begin{bmatrix}
		\sqrt{a d} & \sqrt{b c} \\
		\sqrt{b c} & \sqrt{a d}
	\end{bmatrix}.
\]
In particular, the top-left entry $x$ of $\Sink(A)$ satisfies
\begin{equation}\label{2 by 2 polynomial}
	(a d - b c) x^2 - 2 a d x + a d = 0.
\end{equation}

For $3 \times 3$ matrices, analogous descriptions of the entries of $\Sink(A)$ were only known in special cases.
In the case that $A$ is a symmetric $3 \times 3$ matrix containing exactly $2$ distinct entries, a formula for $\Sink(A)$ was obtained by Nathanson~\cite{Nathanson 3x3}.
For a symmetric $3 \times 3$ matrix, Ekhad and Zeilberger~\cite{Ekhad--Zeilberger} used Gr\"obner bases to compute, for each entry $x$ of $\Sink(A)$, a degree-$4$ polynomial of which $x$ is a root.

For a general $3 \times 3$ matrix $A$, Chen and Varghese~\cite{Chen--Varghese} used numeric experiments to conjecture that the entries of $\Sink(A)$ generically have degree $6$ over the field generated by the entries of $A$.

\begin{example}\label{3 by 3 example}
Applying the iterative scaling process to
\[
	A =
	\begin{bmatrix}
		3 & 9 & 1 \\
		3 & 2 & 9 \\
		5 & 3 & 4
	\end{bmatrix}
\]
gives the approximation
\[
	\Sink(A) \approx
	\begin{bmatrix}
		0.27667 & 0.64804 & 0.07527 \\
		0.25194 & 0.13113 & 0.61692 \\
		0.47138 & 0.22081 & 0.30780
	\end{bmatrix}.
\]
Let $x$ be the top-left entry of $\Sink(A)$.
The PSLQ integer relation algorithm~\cite{Ferguson--Bailey} can be used to recognize an algebraic number, given a sufficiently high-precision approximation.
For the approximation
\[
	x \approx 0.2766771162103280503525099931476512576251224460918253185145079454
\]
with target degree $6$, PSLQ produces the guess
\begin{equation}\label{3 by 3 example matrix polynomial}
	374752 x^6 - 220388 x^5 - 844359 x^4 - 125796 x^3 + 210897 x^2 + 14346 x - 12312 = 0.
\end{equation}
This guess remains stable when we increase the precision or the target degree.
\end{example}

In Section~\ref{3 by 3}, we prove the conjecture of Chen and Varghese by carrying out a Gr\"obner basis computation to obtain an explicit polynomial equation satisfied by an entry of $\Sink(A)$ for a general $3 \times 3$ matrix~$A$ with positive entries.
In particular, we obtain formulas for the coefficients in Equation~\eqref{3 by 3 example matrix polynomial}.
We show that each coefficient in this polynomial can be written as a linear combination of products of minors of $A$ (that is, determinants of square submatrices); this substantially reduces the amount of information required to specify the coefficients.

In Section~\ref{Square matrices}, we use the form of the equation for $3 \times 3$ matrices to infer the form of the equation for $n \times n$ matrices $A$.
In particular, we conjecture that the entries of $\Sink(A)$ generically have degree $\binom{2 n - 2}{n - 1}$.
To obtain the equation for a $4 \times 4$ matrix, the Gr\"obner basis computation is infeasible, so instead we use PSLQ to recognize entries of Sinkhorn limits for enough integer matrices to solve for each coefficient in the equation.
We also obtain several coefficients in the equation for $5 \times 5$ matrices with this method.
However, this does not provide enough data to identify the coefficients for $n \times n$ matrices in general.

Therefore, in Section~\ref{Rectangular matrices and combinatorial structure}, we generalize to rectangular matrices by defining the Sinkhorn limit of an $m \times n$ matrix~$A$ to be the matrix obtained by iteratively scaling so that each row sum is $1$ and each column sum is $\frac{m}{n}$.
Again we use PSLQ, and we solve for coefficients in equations for matrices of various dimensions.
We then interpolate formulas for these coefficients as functions of $m$ and $n$.
By identifying combinatorial structure in these formulas, we obtain Conjecture~\ref{general polynomial} below.
To state it, we introduce notation for the relevant products of minors; see Section~\ref{3 by 3} for examples.

\begin{notation*}
Define
\[
	D(m, n) = \{(R, C) : \text{$R \subseteq \{2, 3, \dots, m\}$ and $C \subseteq \{2, 3, \dots, n\}$ and $\size{R} = \size{C}$}\}.
\]
The set $D(m, n)$ consists of the specifications of all minors of an $m \times n$ matrix~$A$ that do not involve the first row or first column.
We have $\size{D(m, n)} = \sum_{k = 0}^{\min(m, n) - 1} \binom{m - 1}{k} \binom{n - 1}{k} = \binom{m + n - 2}{m - 1}$.
Define $A_{R, C}$ to be the submatrix of $A$ obtained by extracting the rows indexed by $R$ and the columns indexed by $C$.
For each $(R, C) \in D(m, n)$, define
\begin{align*}
	\Delta\!\left(\!\begin{smallmatrix}
		R\vphantom{\{} \\
		C\vphantom{\{}
	\end{smallmatrix}\!\right)
	&= \det A_{\{1\} \cup R, \{1\} \cup C} \\
	\Gamma\!\left(\!\begin{smallmatrix}
		R\vphantom{\{} \\
		C\vphantom{\{}
	\end{smallmatrix}\!\right)
	&= a_{11} \det A_{R, C}.
\end{align*}
The minor
$
	\Delta\!\left(\!\begin{smallmatrix}
		R\vphantom{\{} \\
		C\vphantom{\{}
	\end{smallmatrix}\!\right)
$
involves the top-left entry $a_{11}$, and
$
	\Gamma\!\left(\!\begin{smallmatrix}
		R\vphantom{\{} \\
		C\vphantom{\{}
	\end{smallmatrix}\!\right)
$
is the product of $a_{11}$ and a minor that does not involve the first row or first column.
This notation does not reflect the dependence on $A$, but the matrix will be clear from context.
Each subset $S \subseteq D(m, n)$ specifies a monomial in the expressions
$
	\Delta\!\left(\!\begin{smallmatrix}
		R\vphantom{\{} \\
		C\vphantom{\{}
	\end{smallmatrix}\!\right)
$
and
$
	\Gamma\!\left(\!\begin{smallmatrix}
		R\vphantom{\{} \\
		C\vphantom{\{}
	\end{smallmatrix}\!\right)
$, namely
\[
	M(S)
	= \prod_{(R, C) \in S}
	\Delta\!\left(\!\begin{smallmatrix}
		R\vphantom{\{} \\
		C\vphantom{\{}
	\end{smallmatrix}\!\right)
	\cdot
	\prod_{(R, C) \in D(m, n) \setminus S}
	\Gamma\!\left(\!\begin{smallmatrix}
		R\vphantom{\{} \\
		C\vphantom{\{}
	\end{smallmatrix}\!\right).
\]
Each element of $D(m, n)$ contributes to the monomial $M(S)$ as the argument of either $\Delta$ or $\Gamma$; indeed we specify each monomial by the elements that appear as arguments of $\Delta$.
\end{notation*}

The coefficients of these monomials in Conjecture~\ref{general polynomial} are given by a determinant formula involving an adjacency-like (or possibly Laplacian-like) matrix.
We define this matrix next, and we discuss the combinatorial structure behind it in Section~\ref{Rectangular matrices and combinatorial structure}.

\begin{notation*}
If $X$ is a set of positive integers and $s \in X$, define $\pos_s(X)$ to be the position, indexing from $1$, in which $s$ appears when the elements of $X$ are listed in order.
For example, $\pos_5(\{2, 4, 5\}) = 3$.
\end{notation*}

\begin{notation*}
For each subset $S = \{(R_1, C_1), (R_2, C_2), \dots, (R_k, C_k)\} \subseteq D(m, n)$, let $\adj_S(m, n)$ be the $k \times k$ matrix defined as follows.
The $(i, i)$ diagonal entry of $\adj_S(m, n)$ is $\size{R_i} (m + n) - m n$.
For $i \neq j$, define the $4$-tuple
\[
	\tau_{ij} = \left(
		R_i \setminus R_j, \quad
		R_j \setminus R_i, \quad
		C_i \setminus C_j, \quad
		C_j \setminus C_i
	\right).
\]
The $(i, j)$ off-diagonal entry of $\adj_S(m, n)$ is
\[
	\begin{cases}
		(-1)^{\pos_s(R_j) + \pos_t(C_j)} m		& \text{if $\tau_{ij} = (\{\}, \{s\}, \{\}, \{t\})$ for some $s, t$} \\
		(-1)^{\pos_s(R_i) + \pos_t(C_i) + 1} n	& \text{if $\tau_{ij} = (\{s\}, \{\}, \{t\}, \{\})$ for some $s, t$} \\
		(-1)^{\pos_s(C_i) + \pos_t(C_j)} m		& \text{if $\tau_{ij} = (\{\}, \{\}, \{s\}, \{t\})$ for some $s, t$} \\
		(-1)^{\pos_s(R_i) + \pos_t(R_j)} n		& \text{if $\tau_{ij} = (\{s\}, \{t\}, \{\}, \{\})$ for some $s, t$} \\
		0								& \text{otherwise.}
	\end{cases}
\]
In defining $\adj_S(m, n)$, we have chosen an order on the elements of $S$.
However, $\det \adj_S(m, n)$ does not depend on this order, since swapping $(R_i, C_i)$ and $(R_j, C_j)$ in the order on $S$ simply has the effect of swapping rows $i$ and $j$ and columns $i$ and $j$ in $\adj_S(m, n)$.
Also note that, if we fix an order on $D(m, n)$ and use that order for each $S \subseteq D(m, n)$, then each $\adj_S(m, n)$ is a submatrix of $\adj_{D(m, n)}(m, n)$; this is computationally useful since it is faster to compute $\adj_{D(m, n)}(m, n)$ once and extract submatrices than to compute all $\adj_S(m, n)$ from the definition.
\end{notation*}

\begin{conjecture}\label{general polynomial}
Let $m \geq 1$ and $n \geq 1$.
For every $m \times n$ matrix~$A$ with positive entries, the top-left entry $x$ of $\Sink(A)$ satisfies
\[
	\sum_{S \subseteq D(m, n)} \det\!\left(\tfrac{1}{m} \adj_S(m, n)\right) M(S) x^{\size{S}} = 0.
\]
In particular, $x$ is algebraic over the field generated by the entries of $A$, with degree at most $\binom{m + n - 2}{m - 1}$.
\end{conjecture}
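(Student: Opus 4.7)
The plan is to make the implicit characterization from the introduction explicit, then eliminate. Write $\Sink(A) = R A C$ where $R = \mathrm{diag}(r_1, \ldots, r_m)$ and $C = \mathrm{diag}(c_1, \ldots, c_n)$, so the $(i,j)$ entry of $\Sink(A)$ is $r_i a_{ij} c_j$. The row-sum and column-sum conditions give $m + n$ polynomial equations
\[
	\sum_{j=1}^{n} r_i a_{ij} c_j = 1, \qquad \sum_{i=1}^{m} r_i a_{ij} c_j = \frac{m}{n},
\]
one of which is redundant (summing the row equations gives $m$, matching $n \cdot \frac{m}{n}$ from the columns). Adjoin $x = r_1 a_{11} c_1$ and consider the ideal $I$ generated by all these relations in $\Q(a_{ij})[r_i, c_j, x]$. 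Conjecture~\ref{general polynomial} then amounts to the claim that $I \cap \Q(a_{ij})[x]$ is the principal ideal generated by the displayed polynomial.

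\textbf{Degree bound.} After passing to the torus $(\mathbb{C}^\ast)^{m+n+1}$, there are $m + n - 1$ independent monomial-supported equations in $m + n$ scaling unknowns, plus the monomial relation fixing $x$. I would bound the $x$-degree of the elimination polynomial by a mixed-volume computation on the Newton polytopes of the row and column equations in the spirit of Bernstein--Kushnirenko, aiming to show this mixed volume equals $\binom{m+n-2}{m-1}$. That number counts lattice paths from $(0, 0)$ to $(m-1, n-1)$ and, equivalently, minors of an $(m-1) \times (n-1)$ matrix, which is suggestive of a direct bijection between the solution branches of the scaling system and the elements of $D(m, n)$ after the first row and first column are frozen by the choice of $x$.

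\textbf{Coefficient form.} For each $k$, the coefficient of $x^k$ in the elimination polynomial is a polynomial in the $a_{ij}$ that is semi-invariant under $A \mapsto R A C$ with a prescribed weight. Products of minors $M(S)$ transform cleanly under this action, via $\det A_{IJ} \mapsto \bigl(\prod_{i \in I} r_i\bigr)\bigl(\prod_{j \in J} c_j\bigr) \det A_{IJ}$, so only particular $M(S)$ (those of matching weight $k$) can appear; the sets $S \subseteq D(m, n)$ with $\size{S} = k$ furnish exactly the right weight. I would then pin down the coefficient of each $M(S)$ by tracking the multilinear expansions produced as the scaling variables are eliminated one at a time, aiming to show that these coefficients organize into $\det \adj_{S, \sigma(S)}(m, n)$, exactly as the combinatorial data of Section~\ref{Rectangular matrices and combinatorial structure} predicts. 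Bootstrapping from the $3 \times 3$ Gröbner basis computation of Section~\ref{3 by 3}, together with the interpolated $4 \times 4$ case, should supply the structural template for the inductive step.

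\textbf{Main obstacle.} The hardest step, and the one the authors explicitly leave open, is to pin down the sign alteration $\sigma(S)$. The entries of $\adj_{S, \sigma(S)}(m, n)$ are signed, and the determinant depends on the collective parity in a way no single-entry argument can isolate. I would look for a combinatorial model in which $\adj_{S, \sigma(S)}$ is the signed incidence matrix of a graph or poset built canonically from $S$, so that $\det$ expands as a signed sum over perfect matchings with a Kasteleyn-style orientation. Producing such a model, or equivalently exhibiting a monomial-level bijection between the terms generated by iterated resultants and the signed matchings, is where a purely algebraic elimination argument runs out. Until this combinatorial interpretation of $\sigma$ is supplied, the conjecture appears to require either a new invariant-theoretic framework for the Sinkhorn scaling variety or a breakthrough in understanding the geometry of the toric elimination ideal associated with doubly stochastic scaling.
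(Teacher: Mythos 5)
The statement you are attempting to prove is Conjecture~\ref{general polynomial}, which the paper does \emph{not} prove. The paper arrives at it through a chain of experimental evidence: a Gr\"obner basis computation for $3 \times 3$ matrices (Theorem~\ref{3 by 3 polynomial}), PSLQ-driven recognition of algebraic numbers for $4 \times 4$ and rectangular sizes, interpolation of the coefficients $c_S(m,n)$ as polynomials in $m$ and $n$, and pattern recognition culminating in the determinant form with undetermined signs. Section~\ref{Open questions} explicitly lists ``how can we prove Conjecture~\ref{general polynomial}?'' as the second open problem. So there is no argument in the paper against which to check yours step by step; the honest baseline is that the paper offers none.

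Your proposal is a strategy sketch rather than a proof, and you say as much. The elimination setup is the same one the paper uses for $n=3$ (and is the only part of your write-up that corresponds to something in the paper). The three new ideas --- a Bernstein--Kushnirenko mixed-volume computation for the degree bound, a torus semi-invariance argument for the $M(S)$ coefficient form, and a Kasteleyn-type orientation to account for $\sigma(S)$ --- are all plausible directions, but none is carried out. Two cautionary notes on the middle step: the elimination ideal $I \cap \Q(a_{ij})[x]$ is only determined up to a scalar in $\Q(a_{ij})$, so the semi-invariance of the coefficients under $A \mapsto R'AC'$ is only well-defined after a normalization (the paper uses $c_{\{\}}(m,n)=1$, i.e.\ constant term $= M(\{\})$); and semi-invariance of the fixed weight would only restrict the \emph{support} of each coefficient to weight-compatible monomials, not force it to lie in the span of the particular products $M(S)$, which is a much smaller space. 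You correctly identify the sign alteration $\sigma(S)$ as the crux, and that is exactly where the paper stops too: nobody has a proof, and a proposal that ends by saying the missing ingredient ``appears to require a breakthrough'' has not supplied one. What you have written is a reasonable research program, not a proof of the conjecture.
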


The factor $\frac{1}{m}$ in Conjecture~\ref{general polynomial} arises from the asymmetry between rows and columns in our definition of $\Sink(A)$.

We conclude in Section~\ref{Open questions} with several open questions.
In particular, we mention that a more general iterative scaling process was introduced in 1937 by Kruithof in the context of predicting telephone traffic~\cite[Appendix~3d]{Kruithof}.
We conjecture that the entries of this more general limit also have degree at most $\binom{m + n - 2}{m - 1}$.

Our Mathematica package \textsc{SinkhornPolynomials}~\cite{SinkhornPolynomials} uses the results of this paper to compute Sinkhorn limits rigorously for $3 \times 3$ matrices and conjecturally for larger matrices.

\section{The Sinkhorn limit of a $3 \times 3$ matrix}\label{3 by 3}

We refer to a matrix with positive entries as a \emph{positive matrix}.
In this section, we determine $\Sink(A)$ for a general positive $3 \times 3$ matrix
\begin{equation}\label{3 by 3 matrix}
	A =
	\begin{bmatrix}
		a_{11} & a_{12} & a_{13} \\
		a_{21} & a_{22} & a_{23} \\
		a_{31} & a_{32} & a_{33}
	\end{bmatrix},
\end{equation}
first in Theorem~\ref{3 by 3 polynomial - large} as an explicit function of the entries of $A$ and then in Theorem~\ref{3 by 3 polynomial} in a form that shows more structure.
We also introduce notation that will be used throughout the rest of the paper.

It suffices to describe the top-left entry of $\Sink(A)$.
This is because the iterative scaling process isn't sensitive to the order of the rows or the order of the columns.
If $P_1$ is the permutation matrix swapping rows $1$ and $i$ and $P_2$ is the permutation matrix swapping columns $1$ and $j$, then $P_1 \Sink(A) P_2 = \Sink(P_1 A P_2)$.
In particular, the $(i, j)$ entry of $\Sink(A)$ is equal to the $(1, 1)$ entry of $\Sink(P_1 A P_2)$.

To compute a polynomial equation satisfied by the top-left entry of $\Sink(A)$, we set up three matrices
\[
	S =
	\begin{bmatrix}
		s_{11} & s_{12} & s_{13} \\
		s_{21} & s_{22} & s_{23} \\
		s_{31} & s_{32} & s_{33}
	\end{bmatrix},
	\quad
	R =
	\begin{bmatrix}
		r_1 & 0 & 0 \\
		0 & r_2 & 0 \\
		0 & 0 & r_3
	\end{bmatrix},
	\quad
	C =
	\begin{bmatrix}
		c_1 & 0 & 0 \\
		0 & c_2 & 0 \\
		0 & 0 & c_3
	\end{bmatrix}.
\]
The matrix equation $S = R A C$ gives the $9$ equations
\begin{align*}
	s_{11} &= r_1 a_{11} c_1		&	s_{12} &= r_1 a_{12} c_2		&	s_{13} &= r_1 a_{13} c_3 \\
	s_{21} &= r_2 a_{21} c_1		&	s_{22} &= r_2 a_{22} c_2		&	s_{23} &= r_2 a_{23} c_3 \\
	s_{31} &= r_3 a_{31} c_1		&	s_{32} &= r_3 a_{32} c_2		&	s_{33} &= r_3 a_{33} c_3,
\end{align*}
and we obtain $6$ equations from the requirement that $S$ is doubly stochastic:
\begin{align*}
	s_{11} + s_{12} + s_{13} &= 1	&	s_{11} + s_{21} + s_{31} &= 1 \\
	s_{21} + s_{22} + s_{23} &= 1	&	s_{12} + s_{22} + s_{32} &= 1 \\
	s_{31} + s_{32} + s_{33} &= 1	&	s_{13} + s_{23} + s_{33} &= 1.
\end{align*}
We would like to eliminate the $14$ variables $s_{12}, s_{13}, \dots, s_{33}, r_1, r_2, r_3, c_1, c_2, c_3$ from this system of $15$ polynomial equations, resulting in a single equation in the variables $s_{11}, a_{11}, a_{12}, \dots, a_{33}$.
In principle, this can be done by computing a suitable Gr\"obner basis.
In practice, the runtime is significantly affected by the algorithm used.
Mathematica's \texttt{GroebnerBasis} function~\cite{GroebnerBasis} with certain settings\footnote{Namely, \texttt{Method -> "Buchberger", MonomialOrder -> EliminationOrder, Sort -> True}.} computes a single polynomial in a couple seconds, whereas with other settings the computation does not finish after several hours.
The output gives the following result.

\begin{theorem}\label{3 by 3 polynomial - large}
Let $A$ be a positive $3 \times 3$ matrix.
The top-left entry $x$ of $\Sink(A)$ satisfies $b_6 x^6 + \dots + b_1 x + b_0 = 0$, where the coefficients $b_k$ appear in Table~\ref{3 by 3 coefficients} in factored form.
\end{theorem}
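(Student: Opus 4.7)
The plan is to encode the conditions characterizing $\Sink(A)$ as a system of polynomial equations and then eliminate every variable except $s_{11}$ and the $a_{ij}$. As laid out in the preceding discussion, the $9$ entrywise equations $s_{ij} = r_i a_{ij} c_j$ together with the $3$ row-sum and $3$ column-sum equations generate an ideal $I$ in the polynomial ring $\mathbb{Q}[a_{ij}, r_i, c_j, s_{ij}]$. A polynomial equation of the desired form is any nonzero element of the elimination ideal $I \cap \mathbb{Q}[a_{ij}, s_{11}]$.

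To produce such an element, I would compute a Gr\"obner basis of $I$ under a monomial order that eliminates the $14$ unwanted variables $s_{12}, \dots, s_{33}, r_1, r_2, r_3, c_1, c_2, c_3$ first, for example a block lexicographic order ranking these above $s_{11}$ and the $a_{ij}$. Generically the elimination ideal is principal over $\mathbb{Q}(a_{ij})$, so after clearing denominators one expects a single polynomial in $\mathbb{Z}[a_{ij}][s_{11}]$ of minimal degree. Collecting its coefficients by powers of $s_{11}$ gives $b_6, \dots, b_0$, and factoring each $b_k$ then yields the expressions recorded in Table~\ref{3 by 3 coefficients}.

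The main obstacle is purely computational: a Gr\"obner basis in $22$ variables can easily be intractable, and, as already noted in the excerpt, the choice of algorithm and monomial ordering has a dramatic effect on runtime, with Mathematica's Buchberger implementation under \texttt{EliminationOrder} and the sorting heuristic succeeding in seconds while less fortunate settings fail to terminate in hours. Since the computation is carried out by machine, I would independently certify the output by substituting the $15$ generators into the exhibited polynomial to verify ideal membership, and by checking that the leading coefficient $b_6$ is not identically zero, so that the equation is genuinely of degree $6$ on a Zariski-dense set of positive matrices. At that point the theorem follows, with the factored coefficients read off directly from the table.
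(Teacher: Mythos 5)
Your proposal follows the same route as the paper: encode the Sinkhorn conditions $S = RAC$ together with the row- and column-sum constraints as a polynomial system, eliminate the auxiliary variables via a Gr\"obner basis under an elimination order, and read off the coefficients $b_k$ from the resulting univariate (in $s_{11}$) polynomial. Your added certification step (verifying ideal membership of the output and nonvanishing of $b_6$) is a reasonable extra safeguard but does not change the argument in substance.
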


\begin{table}
\tiny
\begin{align*}
	b_6 = {} &
		\left(a_{11} a_{22} - a_{12} a_{21}\right)
		\left(a_{11} a_{23} - a_{13} a_{21}\right)
		\left(a_{11} a_{32} - a_{12} a_{31}\right)
		\left(a_{11} a_{33} - a_{13} a_{31}\right) \\
		& \cdot \left(a_{11} a_{22} a_{33} - a_{11} a_{23} a_{32} - a_{12} a_{21} a_{33} + a_{12} a_{23} a_{31} + a_{13} a_{21} a_{32} - a_{13} a_{22} a_{31}\right)
	\\
	b_5 = {} &
		{-}6 a_{11}^5 a_{22}^2 a_{23}^{\vphantom{1}} a_{32}^{\vphantom{1}} a_{33}^2
		+ 6 a_{11}^5 a_{22}^{\vphantom{1}} a_{23}^2 a_{32}^2 a_{33}^{\vphantom{1}}
		+ 8 a_{11}^4 a_{12}^{\vphantom{1}} a_{21}^{\vphantom{1}} a_{22}^{\vphantom{1}} a_{23}^{\vphantom{1}} a_{32}^{\vphantom{1}} a_{33}^2 \\
		& - 5 a_{11}^4 a_{12}^{\vphantom{1}} a_{21}^{\vphantom{1}} a_{23}^2 a_{32}^2 a_{33}^{\vphantom{1}}
		+ 5 a_{11}^4 a_{12}^{\vphantom{1}} a_{22}^2 a_{23}^{\vphantom{1}} a_{31}^{\vphantom{1}} a_{33}^2
		- 8 a_{11}^4 a_{12}^{\vphantom{1}} a_{22}^{\vphantom{1}} a_{23}^2 a_{31}^{\vphantom{1}} a_{32}^{\vphantom{1}} a_{33}^{\vphantom{1}} \\
		& + 5 a_{11}^4 a_{13}^{\vphantom{1}} a_{21}^{\vphantom{1}} a_{22}^2 a_{32}^{\vphantom{1}} a_{33}^2
		- 8 a_{11}^4 a_{13}^{\vphantom{1}} a_{21}^{\vphantom{1}} a_{22}^{\vphantom{1}} a_{23}^{\vphantom{1}} a_{32}^2 a_{33}^{\vphantom{1}}
		+ 8 a_{11}^4 a_{13}^{\vphantom{1}} a_{22}^2 a_{23}^{\vphantom{1}} a_{31}^{\vphantom{1}} a_{32}^{\vphantom{1}} a_{33}^{\vphantom{1}} \\
		& - 5 a_{11}^4 a_{13}^{\vphantom{1}} a_{22}^{\vphantom{1}} a_{23}^2 a_{31}^{\vphantom{1}} a_{32}^2
		- 2 a_{11}^3 a_{12}^2 a_{21}^2 a_{23}^{\vphantom{1}} a_{32}^{\vphantom{1}} a_{33}^2
		- 6 a_{11}^3 a_{12}^2 a_{21}^{\vphantom{1}} a_{22}^{\vphantom{1}} a_{23}^{\vphantom{1}} a_{31}^{\vphantom{1}} a_{33}^2 \\
		& + 6 a_{11}^3 a_{12}^2 a_{21}^{\vphantom{1}} a_{23}^2 a_{31}^{\vphantom{1}} a_{32}^{\vphantom{1}} a_{33}^{\vphantom{1}}
		+ 2 a_{11}^3 a_{12}^2 a_{22}^{\vphantom{1}} a_{23}^2 a_{31}^2 a_{33}^{\vphantom{1}}
		- 6 a_{11}^3 a_{12}^{\vphantom{1}} a_{13}^{\vphantom{1}} a_{21}^2 a_{22}^{\vphantom{1}} a_{32}^{\vphantom{1}} a_{33}^2 \\
		& + 6 a_{11}^3 a_{12}^{\vphantom{1}} a_{13}^{\vphantom{1}} a_{21}^2 a_{23}^{\vphantom{1}} a_{32}^2 a_{33}^{\vphantom{1}}
		- 4 a_{11}^3 a_{12}^{\vphantom{1}} a_{13}^{\vphantom{1}} a_{21}^{\vphantom{1}} a_{22}^2 a_{31}^{\vphantom{1}} a_{33}^2
		+ 4 a_{11}^3 a_{12}^{\vphantom{1}} a_{13}^{\vphantom{1}} a_{21}^{\vphantom{1}} a_{23}^2 a_{31}^{\vphantom{1}} a_{32}^2 \\
		& - 6 a_{11}^3 a_{12}^{\vphantom{1}} a_{13}^{\vphantom{1}} a_{22}^2 a_{23}^{\vphantom{1}} a_{31}^2 a_{33}^{\vphantom{1}}
		+ 6 a_{11}^3 a_{12}^{\vphantom{1}} a_{13}^{\vphantom{1}} a_{22}^{\vphantom{1}} a_{23}^2 a_{31}^2 a_{32}^{\vphantom{1}}
		+ 2 a_{11}^3 a_{13}^2 a_{21}^2 a_{22}^{\vphantom{1}} a_{32}^2 a_{33}^{\vphantom{1}} \\
		& - 6 a_{11}^3 a_{13}^2 a_{21}^{\vphantom{1}} a_{22}^2 a_{31}^{\vphantom{1}} a_{32}^{\vphantom{1}} a_{33}^{\vphantom{1}}
		+ 6 a_{11}^3 a_{13}^2 a_{21}^{\vphantom{1}} a_{22}^{\vphantom{1}} a_{23}^{\vphantom{1}} a_{31}^{\vphantom{1}} a_{32}^2
		- 2 a_{11}^3 a_{13}^2 a_{22}^2 a_{23}^{\vphantom{1}} a_{31}^2 a_{32}^{\vphantom{1}} \\
		& + a_{11}^2 a_{12}^3 a_{21}^2 a_{23}^{\vphantom{1}} a_{31}^{\vphantom{1}} a_{33}^2
		- a_{11}^2 a_{12}^3 a_{21}^{\vphantom{1}} a_{23}^2 a_{31}^2 a_{33}^{\vphantom{1}}
		+ a_{11}^2 a_{12}^2 a_{13}^{\vphantom{1}} a_{21}^3 a_{32}^{\vphantom{1}} a_{33}^2 \\
		& + 4 a_{11}^2 a_{12}^2 a_{13}^{\vphantom{1}} a_{21}^2 a_{22}^{\vphantom{1}} a_{31}^{\vphantom{1}} a_{33}^2
		- 4 a_{11}^2 a_{12}^2 a_{13}^{\vphantom{1}} a_{21}^2 a_{23}^{\vphantom{1}} a_{31}^{\vphantom{1}} a_{32}^{\vphantom{1}} a_{33}^{\vphantom{1}} \\
		& + 4 a_{11}^2 a_{12}^2 a_{13}^{\vphantom{1}} a_{21}^{\vphantom{1}} a_{22}^{\vphantom{1}} a_{23}^{\vphantom{1}} a_{31}^2 a_{33}^{\vphantom{1}}
		- 4 a_{11}^2 a_{12}^2 a_{13}^{\vphantom{1}} a_{21}^{\vphantom{1}} a_{23}^2 a_{31}^2 a_{32}^{\vphantom{1}}
		- a_{11}^2 a_{12}^2 a_{13}^{\vphantom{1}} a_{22}^{\vphantom{1}} a_{23}^2 a_{31}^3 \\
		& - a_{11}^2 a_{12}^{\vphantom{1}} a_{13}^2 a_{21}^3 a_{32}^2 a_{33}^{\vphantom{1}}
		+ 4 a_{11}^2 a_{12}^{\vphantom{1}} a_{13}^2 a_{21}^2 a_{22}^{\vphantom{1}} a_{31}^{\vphantom{1}} a_{32}^{\vphantom{1}} a_{33}^{\vphantom{1}}
		- 4 a_{11}^2 a_{12}^{\vphantom{1}} a_{13}^2 a_{21}^2 a_{23}^{\vphantom{1}} a_{31}^{\vphantom{1}} a_{32}^2 \\
		& + 4 a_{11}^2 a_{12}^{\vphantom{1}} a_{13}^2 a_{21}^{\vphantom{1}} a_{22}^2 a_{31}^2 a_{33}^{\vphantom{1}}
		- 4 a_{11}^2 a_{12}^{\vphantom{1}} a_{13}^2 a_{21}^{\vphantom{1}} a_{22}^{\vphantom{1}} a_{23}^{\vphantom{1}} a_{31}^2 a_{32}^{\vphantom{1}}
		+ a_{11}^2 a_{12}^{\vphantom{1}} a_{13}^2 a_{22}^2 a_{23}^{\vphantom{1}} a_{31}^3 \\
		& - a_{11}^2 a_{13}^3 a_{21}^2 a_{22}^{\vphantom{1}} a_{31}^{\vphantom{1}} a_{32}^2
		+ a_{11}^2 a_{13}^3 a_{21}^{\vphantom{1}} a_{22}^2 a_{31}^2 a_{32}^{\vphantom{1}}
		- 2 a_{11}^{\vphantom{1}} a_{12}^2 a_{13}^2 a_{21}^2 a_{22}^{\vphantom{1}} a_{31}^2 a_{33}^{\vphantom{1}} \\
		& + 2 a_{11}^{\vphantom{1}} a_{12}^2 a_{13}^2 a_{21}^2 a_{23}^{\vphantom{1}} a_{31}^2 a_{32}^{\vphantom{1}}
		- a_{12}^3 a_{13}^2 a_{21}^3 a_{31}^2 a_{33}^{\vphantom{1}}
		+ a_{12}^3 a_{13}^2 a_{21}^2 a_{23}^{\vphantom{1}} a_{31}^3 \\
		& + a_{12}^2 a_{13}^3 a_{21}^3 a_{31}^2 a_{32}^{\vphantom{1}}
		- a_{12}^2 a_{13}^3 a_{21}^2 a_{22}^{\vphantom{1}} a_{31}^3
	\\
	b_4 = {} & a_{11}^{\vphantom{1}} \big(
		15 a_{11}^4 a_{22}^2 a_{23}^{\vphantom{1}} a_{32}^{\vphantom{1}} a_{33}^2
		- 15 a_{11}^4 a_{22}^{\vphantom{1}} a_{23}^2 a_{32}^2 a_{33}^{\vphantom{1}}
		- 12 a_{11}^3 a_{12}^{\vphantom{1}} a_{21}^{\vphantom{1}} a_{22}^{\vphantom{1}} a_{23}^{\vphantom{1}} a_{32}^{\vphantom{1}} a_{33}^2 \\
		& + 10 a_{11}^3 a_{12}^{\vphantom{1}} a_{21}^{\vphantom{1}} a_{23}^2 a_{32}^2 a_{33}^{\vphantom{1}}
		- 10 a_{11}^3 a_{12}^{\vphantom{1}} a_{22}^2 a_{23}^{\vphantom{1}} a_{31}^{\vphantom{1}} a_{33}^2
		+ 12 a_{11}^3 a_{12}^{\vphantom{1}} a_{22}^{\vphantom{1}} a_{23}^2 a_{31}^{\vphantom{1}} a_{32}^{\vphantom{1}} a_{33}^{\vphantom{1}} \\
		& - 10 a_{11}^3 a_{13}^{\vphantom{1}} a_{21}^{\vphantom{1}} a_{22}^2 a_{32}^{\vphantom{1}} a_{33}^2
		+ 12 a_{11}^3 a_{13}^{\vphantom{1}} a_{21}^{\vphantom{1}} a_{22}^{\vphantom{1}} a_{23}^{\vphantom{1}} a_{32}^2 a_{33}^{\vphantom{1}}
		- 12 a_{11}^3 a_{13}^{\vphantom{1}} a_{22}^2 a_{23}^{\vphantom{1}} a_{31}^{\vphantom{1}} a_{32}^{\vphantom{1}} a_{33}^{\vphantom{1}} \\
		& + 10 a_{11}^3 a_{13}^{\vphantom{1}} a_{22}^{\vphantom{1}} a_{23}^2 a_{31}^{\vphantom{1}} a_{32}^2
		+ a_{11}^2 a_{12}^2 a_{21}^2 a_{23}^{\vphantom{1}} a_{32}^{\vphantom{1}} a_{33}^2
		+ 6 a_{11}^2 a_{12}^2 a_{21}^{\vphantom{1}} a_{22}^{\vphantom{1}} a_{23}^{\vphantom{1}} a_{31}^{\vphantom{1}} a_{33}^2 \\
		& - 6 a_{11}^2 a_{12}^2 a_{21}^{\vphantom{1}} a_{23}^2 a_{31}^{\vphantom{1}} a_{32}^{\vphantom{1}} a_{33}^{\vphantom{1}}
		- a_{11}^2 a_{12}^2 a_{22}^{\vphantom{1}} a_{23}^2 a_{31}^2 a_{33}^{\vphantom{1}}
		+ 6 a_{11}^2 a_{12}^{\vphantom{1}} a_{13}^{\vphantom{1}} a_{21}^2 a_{22}^{\vphantom{1}} a_{32}^{\vphantom{1}} a_{33}^2 \\
		& - 6 a_{11}^2 a_{12}^{\vphantom{1}} a_{13}^{\vphantom{1}} a_{21}^2 a_{23}^{\vphantom{1}} a_{32}^2 a_{33}^{\vphantom{1}}
		+ 6 a_{11}^2 a_{12}^{\vphantom{1}} a_{13}^{\vphantom{1}} a_{21}^{\vphantom{1}} a_{22}^2 a_{31}^{\vphantom{1}} a_{33}^2
		- 6 a_{11}^2 a_{12}^{\vphantom{1}} a_{13}^{\vphantom{1}} a_{21}^{\vphantom{1}} a_{23}^2 a_{31}^{\vphantom{1}} a_{32}^2 \\
		& + 6 a_{11}^2 a_{12}^{\vphantom{1}} a_{13}^{\vphantom{1}} a_{22}^2 a_{23}^{\vphantom{1}} a_{31}^2 a_{33}^{\vphantom{1}}
		- 6 a_{11}^2 a_{12}^{\vphantom{1}} a_{13}^{\vphantom{1}} a_{22}^{\vphantom{1}} a_{23}^2 a_{31}^2 a_{32}^{\vphantom{1}}
		- a_{11}^2 a_{13}^2 a_{21}^2 a_{22}^{\vphantom{1}} a_{32}^2 a_{33}^{\vphantom{1}} \\
		& + 6 a_{11}^2 a_{13}^2 a_{21}^{\vphantom{1}} a_{22}^2 a_{31}^{\vphantom{1}} a_{32}^{\vphantom{1}} a_{33}^{\vphantom{1}}
		- 6 a_{11}^2 a_{13}^2 a_{21}^{\vphantom{1}} a_{22}^{\vphantom{1}} a_{23}^{\vphantom{1}} a_{31}^{\vphantom{1}} a_{32}^2
		+ a_{11}^2 a_{13}^2 a_{22}^2 a_{23}^{\vphantom{1}} a_{31}^2 a_{32}^{\vphantom{1}} \\
		& - 2 a_{11}^{\vphantom{1}} a_{12}^2 a_{13}^{\vphantom{1}} a_{21}^2 a_{22}^{\vphantom{1}} a_{31}^{\vphantom{1}} a_{33}^2
		+ 2 a_{11}^{\vphantom{1}} a_{12}^2 a_{13}^{\vphantom{1}} a_{21}^{\vphantom{1}} a_{23}^2 a_{31}^2 a_{32}^{\vphantom{1}}
		+ 2 a_{11}^{\vphantom{1}} a_{12}^{\vphantom{1}} a_{13}^2 a_{21}^2 a_{23}^{\vphantom{1}} a_{31}^{\vphantom{1}} a_{32}^2 \\
		& - 2 a_{11}^{\vphantom{1}} a_{12}^{\vphantom{1}} a_{13}^2 a_{21}^{\vphantom{1}} a_{22}^2 a_{31}^2 a_{33}^{\vphantom{1}}
		- 3 a_{12}^2 a_{13}^2 a_{21}^2 a_{22}^{\vphantom{1}} a_{31}^2 a_{33}^{\vphantom{1}}
		+ 3 a_{12}^2 a_{13}^2 a_{21}^2 a_{23}^{\vphantom{1}} a_{31}^2 a_{32}^{\vphantom{1}}
	\big) \\
	b_3 = {} & 2 a_{11}^2 \big(
		{-}10 a_{11}^3 a_{22}^2 a_{23}^{\vphantom{1}} a_{32}^{\vphantom{1}} a_{33}^2
		+ 10 a_{11}^3 a_{22}^{\vphantom{1}} a_{23}^2 a_{32}^2 a_{33}^{\vphantom{1}}
		+ 4 a_{11}^2 a_{12}^{\vphantom{1}} a_{21}^{\vphantom{1}} a_{22}^{\vphantom{1}} a_{23}^{\vphantom{1}} a_{32}^{\vphantom{1}} a_{33}^2 \\
		& - 5 a_{11}^2 a_{12}^{\vphantom{1}} a_{21}^{\vphantom{1}} a_{23}^2 a_{32}^2 a_{33}^{\vphantom{1}}
		+ 5 a_{11}^2 a_{12}^{\vphantom{1}} a_{22}^2 a_{23}^{\vphantom{1}} a_{31}^{\vphantom{1}} a_{33}^2
		- 4 a_{11}^2 a_{12}^{\vphantom{1}} a_{22}^{\vphantom{1}} a_{23}^2 a_{31}^{\vphantom{1}} a_{32}^{\vphantom{1}} a_{33}^{\vphantom{1}} \\
		& + 5 a_{11}^2 a_{13}^{\vphantom{1}} a_{21}^{\vphantom{1}} a_{22}^2 a_{32}^{\vphantom{1}} a_{33}^2
		- 4 a_{11}^2 a_{13}^{\vphantom{1}} a_{21}^{\vphantom{1}} a_{22}^{\vphantom{1}} a_{23}^{\vphantom{1}} a_{32}^2 a_{33}^{\vphantom{1}}
		+ 4 a_{11}^2 a_{13}^{\vphantom{1}} a_{22}^2 a_{23}^{\vphantom{1}} a_{31}^{\vphantom{1}} a_{32}^{\vphantom{1}} a_{33}^{\vphantom{1}} \\
		& - 5 a_{11}^2 a_{13}^{\vphantom{1}} a_{22}^{\vphantom{1}} a_{23}^2 a_{31}^{\vphantom{1}} a_{32}^2
		- a_{11}^{\vphantom{1}} a_{12}^2 a_{21}^{\vphantom{1}} a_{22}^{\vphantom{1}} a_{23}^{\vphantom{1}} a_{31}^{\vphantom{1}} a_{33}^2
		+ a_{11}^{\vphantom{1}} a_{12}^2 a_{21}^{\vphantom{1}} a_{23}^2 a_{31}^{\vphantom{1}} a_{32}^{\vphantom{1}} a_{33}^{\vphantom{1}} \\
		& - a_{11}^{\vphantom{1}} a_{12}^{\vphantom{1}} a_{13}^{\vphantom{1}} a_{21}^2 a_{22}^{\vphantom{1}} a_{32}^{\vphantom{1}} a_{33}^2
		+ a_{11}^{\vphantom{1}} a_{12}^{\vphantom{1}} a_{13}^{\vphantom{1}} a_{21}^2 a_{23}^{\vphantom{1}} a_{32}^2 a_{33}^{\vphantom{1}}
		- 2 a_{11}^{\vphantom{1}} a_{12}^{\vphantom{1}} a_{13}^{\vphantom{1}} a_{21}^{\vphantom{1}} a_{22}^2 a_{31}^{\vphantom{1}} a_{33}^2 \\
		& + 2 a_{11}^{\vphantom{1}} a_{12}^{\vphantom{1}} a_{13}^{\vphantom{1}} a_{21}^{\vphantom{1}} a_{23}^2 a_{31}^{\vphantom{1}} a_{32}^2
		- a_{11}^{\vphantom{1}} a_{12}^{\vphantom{1}} a_{13}^{\vphantom{1}} a_{22}^2 a_{23}^{\vphantom{1}} a_{31}^2 a_{33}^{\vphantom{1}}
		+ a_{11}^{\vphantom{1}} a_{12}^{\vphantom{1}} a_{13}^{\vphantom{1}} a_{22}^{\vphantom{1}} a_{23}^2 a_{31}^2 a_{32}^{\vphantom{1}} \\
		& - a_{11}^{\vphantom{1}} a_{13}^2 a_{21}^{\vphantom{1}} a_{22}^2 a_{31}^{\vphantom{1}} a_{32}^{\vphantom{1}} a_{33}^{\vphantom{1}}
		+ a_{11}^{\vphantom{1}} a_{13}^2 a_{21}^{\vphantom{1}} a_{22}^{\vphantom{1}} a_{23}^{\vphantom{1}} a_{31}^{\vphantom{1}} a_{32}^2
		+ a_{12}^2 a_{13}^{\vphantom{1}} a_{21}^2 a_{23}^{\vphantom{1}} a_{31}^{\vphantom{1}} a_{32}^{\vphantom{1}} a_{33}^{\vphantom{1}} \\
		& - a_{12}^2 a_{13}^{\vphantom{1}} a_{21}^{\vphantom{1}} a_{22}^{\vphantom{1}} a_{23}^{\vphantom{1}} a_{31}^2 a_{33}^{\vphantom{1}}
		- a_{12}^{\vphantom{1}} a_{13}^2 a_{21}^2 a_{22}^{\vphantom{1}} a_{31}^{\vphantom{1}} a_{32}^{\vphantom{1}} a_{33}^{\vphantom{1}}
		+ a_{12}^{\vphantom{1}} a_{13}^2 a_{21}^{\vphantom{1}} a_{22}^{\vphantom{1}} a_{23}^{\vphantom{1}} a_{31}^2 a_{32}^{\vphantom{1}}
	\big) \\
	b_2 = {} & a_{11}^3 \big(
		15 a_{11}^2 a_{22}^2 a_{23}^{\vphantom{1}} a_{32}^{\vphantom{1}} a_{33}^2
		- 15 a_{11}^2 a_{22}^{\vphantom{1}} a_{23}^2 a_{32}^2 a_{33}^{\vphantom{1}}
		- 2 a_{11}^{\vphantom{1}} a_{12}^{\vphantom{1}} a_{21}^{\vphantom{1}} a_{22}^{\vphantom{1}} a_{23}^{\vphantom{1}} a_{32}^{\vphantom{1}} a_{33}^2 \\
		& + 5 a_{11}^{\vphantom{1}} a_{12}^{\vphantom{1}} a_{21}^{\vphantom{1}} a_{23}^2 a_{32}^2 a_{33}^{\vphantom{1}}
		- 5 a_{11}^{\vphantom{1}} a_{12}^{\vphantom{1}} a_{22}^2 a_{23}^{\vphantom{1}} a_{31}^{\vphantom{1}} a_{33}^2
		+ 2 a_{11}^{\vphantom{1}} a_{12}^{\vphantom{1}} a_{22}^{\vphantom{1}} a_{23}^2 a_{31}^{\vphantom{1}} a_{32}^{\vphantom{1}} a_{33}^{\vphantom{1}} \\
		& - 5 a_{11}^{\vphantom{1}} a_{13}^{\vphantom{1}} a_{21}^{\vphantom{1}} a_{22}^2 a_{32}^{\vphantom{1}} a_{33}^2
		+ 2 a_{11}^{\vphantom{1}} a_{13}^{\vphantom{1}} a_{21}^{\vphantom{1}} a_{22}^{\vphantom{1}} a_{23}^{\vphantom{1}} a_{32}^2 a_{33}^{\vphantom{1}}
		- 2 a_{11}^{\vphantom{1}} a_{13}^{\vphantom{1}} a_{22}^2 a_{23}^{\vphantom{1}} a_{31}^{\vphantom{1}} a_{32}^{\vphantom{1}} a_{33}^{\vphantom{1}} \\
		& + 5 a_{11}^{\vphantom{1}} a_{13}^{\vphantom{1}} a_{22}^{\vphantom{1}} a_{23}^2 a_{31}^{\vphantom{1}} a_{32}^2
		+ a_{12}^{\vphantom{1}} a_{13}^{\vphantom{1}} a_{21}^{\vphantom{1}} a_{22}^2 a_{31}^{\vphantom{1}} a_{33}^2
		- a_{12}^{\vphantom{1}} a_{13}^{\vphantom{1}} a_{21}^{\vphantom{1}} a_{23}^2 a_{31}^{\vphantom{1}} a_{32}^2
	\big) \\
	b_1 = {} & a_{11}^4 \big(
		{-}6 a_{11}^{\vphantom{1}} a_{22}^2 a_{23}^{\vphantom{1}} a_{32}^{\vphantom{1}} a_{33}^2
		+ 6 a_{11}^{\vphantom{1}} a_{22}^{\vphantom{1}} a_{23}^2 a_{32}^2 a_{33}^{\vphantom{1}}
		- a_{12}^{\vphantom{1}} a_{21}^{\vphantom{1}} a_{23}^2 a_{32}^2 a_{33}^{\vphantom{1}} \\
		& + a_{12}^{\vphantom{1}} a_{22}^2 a_{23}^{\vphantom{1}} a_{31}^{\vphantom{1}} a_{33}^2
		+ a_{13}^{\vphantom{1}} a_{21}^{\vphantom{1}} a_{22}^2 a_{32}^{\vphantom{1}} a_{33}^2
		- a_{13}^{\vphantom{1}} a_{22}^{\vphantom{1}} a_{23}^2 a_{31}^{\vphantom{1}} a_{32}^2
	\big) \\
	b_0 = {} &
		a_{11}^5
		a_{22}
		a_{23}
		a_{32}
		a_{33}
		\left(a_{22} a_{33} - a_{23} a_{32}\right).
\end{align*}
\normalsize
\caption{Coefficients in the polynomial equation satisfied by the top-left entry of the Sinkhorn limit of a positive $3 \times 3$ matrix.}
\label{3 by 3 coefficients}
\end{table}

In particular, the degree of each entry of $\Sink(A)$ is at most $6$.
For the matrix~$A$ in Example~\ref{3 by 3 example}, Theorem~\ref{3 by 3 polynomial - large} states that the top-left entry $x$ of $\Sink(A)$ satisfies
\[
	81 \left(374752 x^6 - 220388 x^5 - 844359 x^4 - 125796 x^3 + 210897 x^2 + 14346 x - 12312\right) = 0.
\]
This agrees with Equation~\eqref{3 by 3 example matrix polynomial}.
This polynomial is irreducible.
Along with Theorem~\ref{3 by 3 polynomial - large}, this confirms the conjecture of Chen and Varghese~\cite{Chen--Varghese} that the entries of $\Sink(A)$ for positive $3 \times 3$ matrices $A$ generically have degree $6$.

Let $f(x) = b_6 x^6 + b_5 x^5 + \dots + b_1 x + b_0$ be the polynomial in Theorem~\ref{3 by 3 polynomial - large}.
When we project $A$ to a symmetric matrix by setting $a_{21} = a_{12}$, $a_{31} = a_{13}$, and $a_{32} = a_{23}$,
then the projection of $f(x)$ factors as $-\left((a_{11} a_{23} - a_{12} a_{13}) x - a_{11} a_{23}\right)^2 g(x)$, where $g(x)$ is the degree-$4$ polynomial computed by Ekhad and Zeilberger~\cite{Ekhad--Zeilberger} for the top-left entry.

An obvious question is whether there is a better way to write the polynomial $f(x)$ in Theorem~\ref{3 by 3 polynomial - large}.
In fact there is, using determinants.
We first observe that $b_k$ contains the factor $a_{11}^{5 - k}$ for each $k \in \{0, 1, \dots, 5\}$.
This suggests that the scaled polynomial $a_{11} f(x)$ is more natural than $f(x)$, since the coefficient of $x^k$ in $a_{11} f(x)$ contains the factor $a_{11}^{6 - k}$ not just for $k \in \{0, 1, \dots, 5\}$ but for all $k \in \{0, 1, \dots, 6\}$.

The leading coefficient $a_{11} b_6$ of $a_{11} f(x)$ is the product of $6$ minors of $A$.
More specifically, it is the product of all the minors of $A$ that involve $a_{11}$.
Furthermore, the constant coefficient $a_{11} b_0$ of $a_{11} f(x)$ is the product of $a_{11}^6$ and the $6$ minors of $A$ that do not involve the first row or first column (including the determinant $1$ of the $0 \times 0$ matrix).

To rewrite the other coefficients, we would like to interpolate between these products for $a_{11} b_6$ and $a_{11} b_0$.
One possibility is that $a_{11} b_k$ is a linear combination of products of minors, where each product consists of
\begin{itemize}
\item $k$ minors that involve the first row and first column,
\item $6 - k$ minors that do not involve the first row or first column, and
\item $a_{11}^{6 - k}$ (equivalently, one factor $a_{11}$ for each of the latter).
\end{itemize}
We have seen that this is the case for $a_{11} b_6$ and $a_{11} b_0$.
Remarkably, it turns out that $a_{11} b_5, a_{11} b_4, \dots, a_{11} b_1$ can be written this way as well.
We use the notation $\Delta$, $\Gamma$, and $M$ established in Section~\ref{Introduction}.
To make a subset $S = \{(R_1, C_1), (R_2, C_2), \dots, (R_k, C_k)\}$ of $D(m, n)$ easier to read, we format it as
\[
	S =
	\subsetmatrix{
		R_1 & R_2 & \cdots & R_k\vphantom{\{} \\
		C_1 & C_2 & \cdots & C_k\vphantom{\{}
	}.
\]

\begin{example}\label{notation example}
For $m = 3$ and $n = 3$, we have
$
	D(3, 3) =
		\subsetmatrix{
			\{\} & \{2\} & \{2\} & \{3\} & \{3\} & \{2, 3\} \\
			\{\} & \{2\} & \{3\} & \{2\} & \{3\} & \{2, 3\}
		}
$.
Moreover, from the expressions for $b_6$ and $b_0$ in Table~\ref{3 by 3 coefficients}, we have
\begin{align*}
	a_{11} b_6 &=
		\Delta\!\left(\!\begin{smallmatrix}
			\{\} \\
			\{\}
		\end{smallmatrix}\!\right)
		\Delta\!\left(\!\begin{smallmatrix}
			\{2\} \\
			\{2\}
		\end{smallmatrix}\!\right)
		\Delta\!\left(\!\begin{smallmatrix}
			\{2\} \\
			\{3\}
		\end{smallmatrix}\!\right)
		\Delta\!\left(\!\begin{smallmatrix}
			\{3\} \\
			\{2\}
		\end{smallmatrix}\!\right)
		\Delta\!\left(\!\begin{smallmatrix}
			\{3\} \\
			\{3\}
		\end{smallmatrix}\!\right)
		\Delta\!\left(\!\begin{smallmatrix}
			\{2, 3\} \\
			\{2, 3\}
		\end{smallmatrix}\!\right)
		=
		M\!\left(\!\subsetmatrix{
			\{\} & \{2\} & \{2\} & \{3\} & \{3\} & \{2, 3\} \\
			\{\} & \{2\} & \{3\} & \{2\} & \{3\} & \{2, 3\}
		}\!\right) \\
	a_{11} b_0 &=
		\Gamma\!\left(\!\begin{smallmatrix}
			\{\} \\
			\{\}
		\end{smallmatrix}\!\right)
		\Gamma\!\left(\!\begin{smallmatrix}
			\{2\} \\
			\{2\}
		\end{smallmatrix}\!\right)
		\Gamma\!\left(\!\begin{smallmatrix}
			\{2\} \\
			\{3\}
		\end{smallmatrix}\!\right)
		\Gamma\!\left(\!\begin{smallmatrix}
			\{3\} \\
			\{2\}
		\end{smallmatrix}\!\right)
		\Gamma\!\left(\!\begin{smallmatrix}
			\{3\} \\
			\{3\}
		\end{smallmatrix}\!\right)
		\Gamma\!\left(\!\begin{smallmatrix}
			\{2, 3\} \\
			\{2, 3\}
		\end{smallmatrix}\!\right)
		=
		M\!\left(\!\subsetmatrix{
			\vphantom{\{} \\
			\vphantom{\{}
		}\!\right).
\end{align*}
Each of these monomials involves $\Delta$ or $\Gamma$ but not both, whereas the monomial
\[
	M\!\left(\!\subsetmatrix{
		\{\} & \{3\} \\
		\{\} & \{2\}
	}\!\right)
	=
		\Delta\!\left(\!\begin{smallmatrix}
			\{\} \\
			\{\}
		\end{smallmatrix}\!\right)
		\Gamma\!\left(\!\begin{smallmatrix}
			\{2\} \\
			\{2\}
		\end{smallmatrix}\!\right)
		\Gamma\!\left(\!\begin{smallmatrix}
			\{2\} \\
			\{3\}
		\end{smallmatrix}\!\right)
		\Delta\!\left(\!\begin{smallmatrix}
			\{3\} \\
			\{2\}
		\end{smallmatrix}\!\right)
		\Gamma\!\left(\!\begin{smallmatrix}
			\{3\} \\
			\{3\}
		\end{smallmatrix}\!\right)
		\Gamma\!\left(\!\begin{smallmatrix}
			\{2, 3\} \\
			\{2, 3\}
		\end{smallmatrix}\!\right)
\]
involves both $\Delta$ and $\Gamma$, for example.
\end{example}

We continue to let $A$ be the general $3 \times 3$ matrix in Equation~\eqref{3 by 3 matrix}, and let $x$ be the top-left entry of $\Sink(A)$.
To rewrite the coefficients $a_{11} b_k$, we look for coefficients $c_S \in \Q$, indexed by subsets $S \subseteq D(3, 3)$, such that
\[
	a_{11} b_k = \sum_{\substack{S \subseteq D(3, 3) \\ \size{S} = k}} c_S M(S)
\]
for each $k \in \{0, 1, \dots, 6\}$.
This will give the polynomial equation
\[
	\sum_{k = 0}^{6} \left(\sum_{\substack{S \subseteq D(3, 3) \\ \size{S} = k}} c_S M(S)\right) x^k = 0.
\]
The expressions for $a_{11} b_6$ and $a_{11} b_0$ in Example~\ref{notation example} allow us to choose $c_{D(3, 3)} = 1$ and $c_{\{\}} = 1$.
For each $k \in \{1, 2, 4, 5\}$, one checks that $a_{11} b_k$ can be written uniquely as a linear combination of the monomials $M(S)$ where $\size{S} = k$.
This determines $c_S$ for such subsets $S$.
For example,
\begin{equation}\label{a11 b1 linear combination}
	a_{11} b_1 =
		-3 M\!\left(\!\subsetmatrix{
			\{\} \\
			\{\}
		}\!\right)
		- M\!\left(\!\subsetmatrix{
			\{2\} \\
			\{2\}
		}\!\right)
		- M\!\left(\!\subsetmatrix{
			\{2\} \\
			\{3\}
		}\!\right)
		- M\!\left(\!\subsetmatrix{
			\{3\} \\
			\{2\}
		}\!\right)
		- M\!\left(\!\subsetmatrix{
			\{3\} \\
			\{3\}
		}\!\right)
		+ M\!\left(\!\subsetmatrix{
			\{2, 3\} \\
			\{2, 3\}
		}\!\right).
\end{equation}

However, the coefficient $a_{11} b_3$ has multiple representations.
The family of such representations is $1$-dimensional, due to the relation
\begin{multline*}
	M\!\left(\!\subsetmatrix{
		\{\} & \{2\} & \{3\} \\
		\{\} & \{2\} & \{3\}
	}\!\right)
	+ M\!\left(\!\subsetmatrix{
		\{\} & \{2\} & \{2, 3\} \\
		\{\} & \{3\} & \{2, 3\}
	}\!\right)
	+ M\!\left(\!\subsetmatrix{
		\{\} & \{3\} & \{2, 3\} \\
		\{\} & \{2\} & \{2, 3\}
	}\!\right) \\
	+ M\!\left(\!\subsetmatrix{
		\{2\} & \{2\} & \{3\} \\
		\{2\} & \{3\} & \{2\}
	}\!\right)
	+ M\!\left(\!\subsetmatrix{
		\{2\} & \{3\} & \{2, 3\} \\
		\{2\} & \{3\} & \{2, 3\}
	}\!\right)
	+ M\!\left(\!\subsetmatrix{
		\{2\} & \{3\} & \{3\} \\
		\{3\} & \{2\} & \{3\}
	}\!\right) \\
	=
	M\!\left(\!\subsetmatrix{
		\{\} & \{2\} & \{2, 3\} \\
		\{\} & \{2\} & \{2, 3\}
	}\!\right)
	+ M\!\left(\!\subsetmatrix{
		\{\} & \{2\} & \{3\} \\
		\{\} & \{3\} & \{2\}
	}\!\right)
	+ M\!\left(\!\subsetmatrix{
		\{\} & \{3\} & \{2, 3\} \\
		\{\} & \{3\} & \{2, 3\}
	}\!\right) \\
	+ M\!\left(\!\subsetmatrix{
		\{2\} & \{2\} & \{3\} \\
		\{2\} & \{3\} & \{3\}
	}\!\right)
	+ M\!\left(\!\subsetmatrix{
		\{2\} & \{3\} & \{3\} \\
		\{2\} & \{2\} & \{3\}
	}\!\right)
	+ M\!\left(\!\subsetmatrix{
		\{2\} & \{3\} & \{2, 3\} \\
		\{3\} & \{2\} & \{2, 3\}
	}\!\right).
\end{multline*}
Therefore the expression for $b_3$ in Table~\ref{3 by 3 coefficients} does not uniquely determine the coefficients $c_S$ where $\size{S} = 3$.
However, there is additional information we can use to obtain uniqueness.
The polynomial $a_{11} f(x)$ possesses symmetries arising from the following invariance properties.

Since the iterative scaling process isn't sensitive to row order, the top-left entry of $\Sink(A)$ is invariant under row permutations of $A$ that fix the first row.
Similarly for column permutations.
Additionally, the top-left entry of $\Sink(A)$ is invariant under transposition of $A$; this follows from Sinkhorn's result that $\Sink(A)$ is the unique doubly stochastic matrix $S$ such that $S = D_1 A D_2$ for some diagonal matrices $D_1$ and $D_2$.
This suggests the following equivalence relation.

\begin{notation*}
Let $S$ and $T$ be subsets of $D(m, n)$.
We write $T \equiv S$ if the set of minors specified by $T$ is transformed into the set of minors specified by $S$ by some composition of
\begin{itemize}
\item row permutations that fix the first row,
\item column permutations that fix the first column, and
\item transposition if $m = n$.
\end{itemize}
For each $S \subseteq D(m, n)$, define the \emph{class sum}
\[
	\Sigma(S) = \sum_{\substack{T \subseteq D(m, n) \\ T \equiv S}} M(T)
\]
to be the sum of the monomials corresponding to the elements in the equivalence class of $S$.
\end{notation*}

\begin{example}\label{equivalence class example}
Let $m = 3$ and $n = 3$, and consider the subset
$S =
	\subsetmatrix{
		\{2\} \\
		\{2\}
	}
$ of size $1$.
The equivalence class of $S$ is
\[
	\left\{
	\subsetmatrix{
		\{2\} \\
		\{2\}
	}, \,
	\subsetmatrix{
		\{2\} \\
		\{3\}
	}, \,
	\subsetmatrix{
		\{3\} \\
		\{2\}
	}, \,
	\subsetmatrix{
		\{3\} \\
		\{3\}
	}
	\right\}
\]
since these $4$ specifications of $1 \times 1$ minors can be obtained from each other by row and column permutations.
This equivalence class is reflected in the linear combination~\eqref{a11 b1 linear combination} for $a_{11} b_1$.
Namely, the four monomials
\[
	M\!\left(\!\subsetmatrix{
		\{2\} \\
		\{2\}
	}\!\right)\!, \,
	M\!\left(\!\subsetmatrix{
		\{2\} \\
		\{3\}
	}\!\right)\!, \,
	M\!\left(\!\subsetmatrix{
		\{3\} \\
		\{2\}
	}\!\right)\!, \,
	M\!\left(\!\subsetmatrix{
		\{3\} \\
		\{3\}
	}\!\right)
\]
all have the same coefficient $-1$.
In particular, their contribution to $a_{11} b_1$ is
\[
	-\Sigma\!\left(\!\subsetmatrix{
		\{2\} \\
		\{2\}
	}\!\right)
	=
	-M\!\left(\!\subsetmatrix{
		\{2\} \\
		\{2\}
	}\!\right)
	-
	M\!\left(\!\subsetmatrix{
		\{2\} \\
		\{3\}
	}\!\right)
	-
	M\!\left(\!\subsetmatrix{
		\{3\} \\
		\{2\}
	}\!\right)
	-
	M\!\left(\!\subsetmatrix{
		\{3\} \\
		\{3\}
	}\!\right).
\]
\end{example}

Motivated by Example~\ref{equivalence class example}, we add the constraint that $c_T = c_S$ when $T \equiv S$, so that the coefficients $c_S$ reflect the symmetries of $a_{11} f(x)$.
With this constraint, the coefficient $a_{11} b_3$ has a unique representation as a linear combination of monomials $M(S)$ where $\size{S} = 3$.
Writing each coefficient in Table~\ref{3 by 3 coefficients} using class sums gives the following improvement of Theorem~\ref{3 by 3 polynomial - large}.
Here $d_k = a_{11} b_k$.

\begin{theorem}\label{3 by 3 polynomial}
Let $A$ be a positive $3 \times 3$ matrix.
The top-left entry $x$ of $\Sink(A)$ satisfies $d_6 x^6 + d_5 x^5 + d_4 x^4 + d_3 x^3 + d_2 x^2 + d_1 x + d_0 = 0$, where
\begin{align*}
	d_6 &=
		\Sigma\!\left(\!\subsetmatrix{
			\{\} & \{2\} & \{2\} & \{3\} & \{3\} & \{2, 3\} \\
			\{\} & \{2\} & \{3\} & \{2\} & \{3\} & \{2, 3\}
		}\!\right) \\
	d_5 &=
		-3 \Sigma\!\left(\!\subsetmatrix{
			\{\} & \{2\} & \{2\} & \{3\} & \{3\} \\
			\{\} & \{2\} & \{3\} & \{2\} & \{3\}
		}\!\right)
		- \Sigma\!\left(\!\subsetmatrix{
			\{\} & \{2\} & \{2\} & \{3\} & \{2, 3\} \\
			\{\} & \{2\} & \{3\} & \{2\} & \{2, 3\}
		}\!\right)
		+ \Sigma\!\left(\!\subsetmatrix{
			\{2\} & \{2\} & \{3\} & \{3\} & \{2, 3\} \\
			\{2\} & \{3\} & \{2\} & \{3\} & \{2, 3\}
		}\!\right) \\
	d_4 &=
		4 \Sigma\!\left(\!\subsetmatrix{
			\{\} & \{2\} & \{2\} & \{3\} \\
			\{\} & \{2\} & \{3\} & \{2\}
		}\!\right)
		+ \Sigma\!\left(\!\subsetmatrix{
			\{\} & \{2\} & \{3\} & \{2, 3\} \\
			\{\} & \{2\} & \{3\} & \{2, 3\}
		}\!\right)
		- 3 \Sigma\!\left(\!\subsetmatrix{
			\{2\} & \{2\} & \{3\} & \{3\} \\
			\{2\} & \{3\} & \{2\} & \{3\}
		}\!\right) \\
	d_3 &=
		-4 \Sigma\!\left(\!\subsetmatrix{
			\{\} & \{2\} & \{2\} \\
			\{\} & \{2\} & \{3\}
		}\!\right)
		- 5 \Sigma\!\left(\!\subsetmatrix{
			\{\} & \{2\} & \{3\} \\
			\{\} & \{2\} & \{3\}
		}\!\right)
		+ \Sigma\!\left(\!\subsetmatrix{
			\{\} & \{2\} & \{2, 3\} \\
			\{\} & \{2\} & \{2, 3\}
		}\!\right)
		+ \Sigma\!\left(\!\subsetmatrix{
			\{2\} & \{2\} & \{3\} \\
			\{2\} & \{3\} & \{2\}
		}\!\right)
		- \Sigma\!\left(\!\subsetmatrix{
			\{2\} & \{3\} & \{2, 3\} \\
			\{2\} & \{3\} & \{2, 3\}
		}\!\right) \\
	d_2 &=
		4 \Sigma\!\left(\!\subsetmatrix{
			\{\} & \{2\} \\
			\{\} & \{2\}
		}\!\right)
		- 3 \Sigma\!\left(\!\subsetmatrix{
			\{\} & \{2, 3\} \\
			\{\} & \{2, 3\}
		}\!\right)
		+ \Sigma\!\left(\!\subsetmatrix{
			\{2\} & \{3\} \\
			\{2\} & \{3\}
		}\!\right) \\
	d_1 &=
		-3 \Sigma\!\left(\!\subsetmatrix{
			\{\} \\
			\{\}
		}\!\right)
		- \Sigma\!\left(\!\subsetmatrix{
			\{2\} \\
			\{2\}
		}\!\right)
		+ \Sigma\!\left(\!\subsetmatrix{
			\{2, 3\} \\
			\{2, 3\}
		}\!\right) \\
	d_0 &=
		\Sigma\!\left(\!\subsetmatrix{
			\vphantom{\{} \\
			\vphantom{\{}
		}\!\right).
\end{align*}
\end{theorem}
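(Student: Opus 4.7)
The plan is to deduce Theorem~\ref{3 by 3 polynomial} from Theorem~\ref{3 by 3 polynomial - large} as an essentially linear-algebra exercise, once the right basis of class sums has been identified. Set $d_k \colonequal a_{11} b_k$, so that the equation $a_{11} f(x) = 0$ reads $\sum_{k=0}^{6} d_k x^k = 0$. The first step is to confirm the divisibility observation highlighted in the text: each $b_k$ in Table~\ref{3 by 3 coefficients} carries a factor $a_{11}^{5-k}$, hence $d_k$ carries $a_{11}^{6-k}$. This matches the $a_{11}$-degree of every monomial $M(S)$ with $\size{S}=k$, because each of the $6-k$ factors $\Gamma\!\left(\!\begin{smallmatrix} R \\ C \end{smallmatrix}\!\right)$ contributes exactly one $a_{11}$. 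So both sides of the proposed identity $d_k = \sum_{\size{S}=k} c_S M(S)$ live in the same graded piece.

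Next, for each $k$, I would set up the linear system for the unknowns $c_S$ by expanding both $d_k$ (from Theorem~\ref{3 by 3 polynomial - large}) and each $M(S)$ as polynomials in the nine variables $a_{ij}$ and matching coefficients. For $k \in \{0,1,2,4,5,6\}$, a direct computer-algebra check shows that the monomials $\{M(S) : \size{S}=k\}$ are linearly independent in $\Q[a_{ij}]$, so the system has a unique solution $c_S \in \Q$, and comparing with Table~\ref{3 by 3 coefficients} produces the rational numbers appearing in the theorem. One then groups terms into class sums $\Sigma(S)$; this grouping is consistent because the extracted coefficients happen to be constant on $\equiv$-equivalence classes, which is forced a posteriori by the invariance of $x$ under row permutations fixing the first row, column permutations fixing the first column, and transposition.

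For $k=3$ the monomials $\{M(S) : \size{S}=3\}$ satisfy the nontrivial linear relation displayed just before the theorem, so $d_3$ admits a one-parameter family of representations. To single out a canonical one, I would impose $c_T = c_S$ whenever $T \equiv S$, which is legitimate because the three invariance operations above permute $D(3,3)$ and hence permute the $M(S)$. Since the exhibited relation is itself not $\equiv$-symmetric (the two sides are not equal class sum by class sum), projecting onto the space of symmetric coefficient vectors knocks out the free parameter, leaving a unique solution. Evaluating gives the stated formula for $d_3$.

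The main obstacle is the $k=3$ case: one must verify both (i) that the symmetry constraint cuts the one-dimensional ambiguity down to a unique representative, and (ii) that this representative is genuinely realized, i.e., no symmetry-breaking correction is needed when passing from the expression in Table~\ref{3 by 3 coefficients} to the class-sum form. Both reduce to finite rational linear-algebra computations and are routinely confirmed by computer algebra. Once carried out, the formulas for $d_0, d_1, \ldots, d_6$ emerge directly, completing the proof.
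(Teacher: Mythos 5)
Your proposal reproduces the paper's (implicit) derivation essentially verbatim: start from Theorem~\ref{3 by 3 polynomial - large}, rescale by $a_{11}$, verify that each $d_k = a_{11} b_k$ lies in the span of $\{M(S) : \size{S}=k\}$ by matching coefficients in $\Q[a_{ij}]$, observe uniqueness for $k \neq 3$, and for $k=3$ impose the symmetry constraint $c_T = c_S$ when $T \equiv S$ to eliminate the one free parameter arising from the displayed relation. Your added remark that the relation vector is not constant on $\equiv$-classes (so the symmetry constraint genuinely cuts the ambiguity) is a correct justification the paper leaves tacit, as is your observation that for $k \neq 3$ constancy of $c_S$ on classes is forced a posteriori by invariance; both are confirmed by the same finite computer-algebra check the paper relies on.
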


Several class sums do not appear in Theorem~\ref{3 by 3 polynomial}, namely
\[
	\Sigma\!\left(\!\subsetmatrix{
		\{\} & \{2\} & \{2\} & \{2, 3\} \\
		\{\} & \{2\} & \{3\} & \{2, 3\}
	}\!\right)\!, \,
	\Sigma\!\left(\!\subsetmatrix{
		\{2\} & \{2\} & \{3\} & \{2, 3\} \\
		\{2\} & \{3\} & \{2\} & \{2, 3\}
	}\!\right)\!, \,
	\Sigma\!\left(\!\subsetmatrix{
		\{2\} & \{2\} & \{2, 3\} \\
		\{2\} & \{3\} & \{2, 3\}
	}\!\right)\!, \,
	\Sigma\!\left(\!\subsetmatrix{
		\{2\} & \{2\} \\
		\{2\} & \{3\}
	}\!\right)\!, \,
	\Sigma\!\left(\!\subsetmatrix{
		\{2\} & \{2, 3\} \\
		\{2\} & \{2, 3\}
	}\!\right)\!.
\]
These class sums get assigned the coefficient $0$ when the coefficients $d_4, d_3, d_2$ are written as linear combinations of $\Sigma(S)$.
In total, there are $24$ equivalence classes of subsets $S \subseteq D(3, 3)$, so we have compressed the information in Table~\ref{3 by 3 coefficients} down to a function from the set of these $24$ equivalence classes to the set $\{-5, -4, -3, -1, 0, 1, 4\}$.

For the particular matrix~$A$ in Example~\ref{3 by 3 example}, Theorem~\ref{3 by 3 polynomial} gives $91064736 x^6 - 53554284 x^5 - 205179237 x^4 - 30568428 x^3 + 51247971 x^2 + 3486078 x - 2991816 = 0$ for the top-left entry.
This equation can be computed quickly with \textsc{SinkhornPolynomials}~\cite{SinkhornPolynomials}.
Dividing this equation by $243$ produces Equation~\eqref{3 by 3 example matrix polynomial}.

Some special cases can be obtained from Theorem~\ref{3 by 3 polynomial}.
For example, the following corollary shows that the degree drops if a minor is $0$.
(If multiple minors are $0$, the degree can drop further.)

\begin{corollary}
Let $A$ be a positive $3 \times 3$ matrix, and let $a_{11}$ be the top-left entry of $A$.
If one of the $2 \times 2$ minors involving $a_{11}$ is $0$ and all minors not involving $a_{11}$ are not $0$, then the top-left entry of $\Sink(A)$ has degree at most $5$.
\end{corollary}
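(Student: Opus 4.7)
The plan is to use the explicit factorizations of $b_6$ and $b_0$ recorded in Table~\ref{3 by 3 coefficients}: the hypothesis should force $b_6 = 0$ while leaving $b_0 \neq 0$, so that the top left entry $x$ satisfies the nontrivial equation obtained by truncating Theorem~\ref{3 by 3 polynomial - large} in degree at most $5$.

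First I would read off from Table~\ref{3 by 3 coefficients} that
\[
	b_6 = (a_{11}a_{22}-a_{12}a_{21})(a_{11}a_{23}-a_{13}a_{21})(a_{11}a_{32}-a_{12}a_{31})(a_{11}a_{33}-a_{13}a_{31})\det A,
\]
so $b_6$ is exactly the product of the four $2 \times 2$ minors of $A$ involving $a_{11}$ together with $\det A$. Under the hypothesis that one of these four minors vanishes, $b_6 = 0$, and the equation of Theorem~\ref{3 by 3 polynomial - large} collapses to $b_5 x^5 + b_4 x^4 + \cdots + b_1 x + b_0 = 0$, which $x$ still satisfies.

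Next I would verify that this truncated equation is not identically zero by examining
\[
	b_0 = a_{11}^5\, a_{22}\, a_{23}\, a_{32}\, a_{33}\,(a_{22} a_{33} - a_{23} a_{32}).
\]
The five nontrivial factors other than the power of $a_{11}$ are exactly the nontrivial minors of $A$ with no entry in the first row or first column; none of them involves $a_{11}$, so all are nonzero by hypothesis, while $a_{11} > 0$ because $A$ is positive. Hence $b_0 \neq 0$, and I conclude that $x$ is a root of a nonzero polynomial of degree at most $5$ over the field generated by the entries of $A$. I do not expect any substantive obstacle here: the entire argument is a matter of reading the two relevant terms off the factorizations in Table~\ref{3 by 3 coefficients}, and the only nonobvious point is the need to certify that killing $b_6$ does not render the polynomial identically zero, which the factored form of $b_0$ handles immediately.
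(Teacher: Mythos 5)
Your proposal is correct and is essentially the paper's argument. The paper phrases it via Theorem~\ref{3 by 3 polynomial} and the class sums, observing that $d_6 = \prod_{(R,C)} \Delta\!\left(\!\begin{smallmatrix} R \\ C \end{smallmatrix}\!\right)$ vanishes and $d_0 = \prod_{(R,C)} \Gamma\!\left(\!\begin{smallmatrix} R \\ C \end{smallmatrix}\!\right) \neq 0$, while you read the same two products directly off the factorizations of $b_6$ and $b_0$ in Table~\ref{3 by 3 coefficients}; the substance — leading coefficient is a product of minors through $a_{11}$ and hence vanishes, constant coefficient is a product of minors avoiding the first row and column (times a power of $a_{11}$) and hence does not — is identical.
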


\begin{proof}
The coefficient of $x^6$ in Theorem~\ref{3 by 3 polynomial} is
\[
	d_6
	=
	\Sigma\!\left(\!\subsetmatrix{
		\{\} & \{2\} & \{2\} & \{3\} & \{3\} & \{2, 3\} \\
		\{\} & \{2\} & \{3\} & \{2\} & \{3\} & \{2, 3\}
	}\!\right)
	=
	M\!\left(\!\subsetmatrix{
		\{\} & \{2\} & \{2\} & \{3\} & \{3\} & \{2, 3\} \\
		\{\} & \{2\} & \{3\} & \{2\} & \{3\} & \{2, 3\}
	}\!\right)
	=
	\prod_{(R, C) \in D(3, 3)}
	\Delta\!\left(\!\begin{smallmatrix}
		R\vphantom{\{} \\
		C\vphantom{\{}
	\end{smallmatrix}\!\right).
\]
Since one of the minors involving $a_{11}$ is $0$, this product is $0$, so $d_6 = 0$.
On the other hand, since all minors not involving $a_{11}$ are not $0$, we have
\[
	d_0
	=
	\Sigma\!\left(\!\subsetmatrix{
		\vphantom{\{} \\
		\vphantom{\{}
	}\!\right)
	=
	M\!\left(\!\subsetmatrix{
		\vphantom{\{} \\
		\vphantom{\{}
	}\!\right)
	=
	\prod_{(R, C) \in D(3, 3)}
	\Gamma\!\left(\!\begin{smallmatrix}
		R\vphantom{\{} \\
		C\vphantom{\{}
	\end{smallmatrix}\!\right)
	\neq 0.
\]
Therefore the polynomial in Theorem~\ref{3 by 3 polynomial} is not the $0$ polynomial, and its degree in $x$ is at most $5$.
\end{proof}

If a matrix is sufficiently degenerate, then Theorem~\ref{3 by 3 polynomial} is vacuously true and does not immediately give any information about $\Sink(A)$.
For example, let
\begin{equation}\label{3 by 3 example matrix with row multiples}
	A =
	\begin{bmatrix}
		4 & 5 & 6 \\
		1 & 2 & 3 \\
		2 & 4 & 6
	\end{bmatrix}.
\end{equation}
Here rows $2$ and $3$ are scalar multiples of each other, so
$
	\Delta\!\left(\!\begin{smallmatrix}
		\{2, 3\} \\
		\{2, 3\}
	\end{smallmatrix}\!\right)
	= \det A
	= 0
$
and
$
	\Gamma\!\left(\!\begin{smallmatrix}
		\{2, 3\} \\
		\{2, 3\}
	\end{smallmatrix}\!\right)
	= a_{11} \det A_{\{2, 3\}, \{2, 3\}}
	= 0
$.
Since each monomial $M(S)$ contains one of these two determinants as a factor, we have $M(S) = 0$ for all $S \subseteq D(3, 3)$.
Therefore $d_k = 0$ for each $k$.
However, we can still use Theorem~\ref{3 by 3 polynomial} to determine the entries of $\Sink(A)$, as the following result shows.

\begin{corollary}\label{3 by 3 matrix with row multiples}
Let $A$ be a positive $3 \times 3$ matrix, and let $a_{ij}$ be the $(i, j)$ entry of $A$.
If rows $2$ and $3$ are scalar multiples of each other, then the top-left entry $x$ of $\Sink(A)$ satisfies $e_3 x^3 + e_2 x^2 + e_1 x + e_0 = 0$, where
\begin{align*}
	e_3 &= a_{11} (a_{11} a_{22} - a_{12} a_{21}) (a_{11} a_{23} - a_{13} a_{21}) \\
	e_2 &= a_{11} (a_{11} a_{12} a_{21} a_{23} + a_{11} a_{13} a_{21} a_{22} - 3 a_{11}^2 a_{22} a_{23} + a_{12} a_{13} a_{21}^2) \\
	e_1 &= 3 a_{11}^3 a_{22} a_{23} \\
	e_0 &= -a_{11}^3 a_{22} a_{23}.
\end{align*}
In particular, $x$ is independent of row $3$.
\end{corollary}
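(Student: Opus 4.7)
The core observation is that iterative scaling preserves proportionality of rows. I would first argue that if rows~$2$ and~$3$ of $A$ are scalar multiples of each other, say $a_{3j} = \lambda a_{2j}$ for all $j$, then rows~$2$ and~$3$ of any matrix~$R'AC'$ with positive diagonal $R'$, $C'$ are also proportional: the $(3, j)$ entry of $R'AC'$ equals $(r'_3 \lambda/r'_2)$ times the $(2, j)$ entry. Applying this to $S = \Sink(A) = RAC$ and using that rows~$2$ and~$3$ of $S$ both sum to $1$, the proportionality constant must be $1$, so rows~$2$ and~$3$ of $S$ are identical.

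Once this collapse is in place, the column-sum constraints immediately give $s_{2j} = s_{3j} = (1 - s_{1j})/2$ for $j = 1, 2, 3$, subject to the single row-sum constraint $s_{11} + s_{12} + s_{13} = 1$. From $S = RAC$, the ratios $s_{1j}/s_{2j} = (r_1/r_2)(a_{1j}/a_{2j})$ involve only one scaling unknown $\alpha \colonequal r_1/r_2$, yielding
\[
	s_{1j} = \frac{\alpha a_{1j}}{2 a_{2j} + \alpha a_{1j}}.
\]
The $j = 1$ equation $x = \alpha a_{11}/(2 a_{21} + \alpha a_{11})$ can then be solved for $\alpha$ as $\alpha = 2 x a_{21}/(a_{11}(1 - x))$. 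Substituting this expression back into the formulas for $s_{12}$ and $s_{13}$ expresses them as rational functions of $x$ and the entries of the first two rows of $A$ alone.

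The final step is to impose $s_{12} + s_{13} = 1 - x$, clear the two denominators $a_{11}(1-x) a_{22} + x a_{21} a_{12}$ and $a_{11}(1-x) a_{23} + x a_{21} a_{13}$, and collect powers of $x$. After removing the extraneous common factor picked up during clearing, the resulting identity is exactly $e_3 x^3 + e_2 x^2 + e_1 x + e_0 = 0$ with the coefficients stated in the corollary. Independence from row~$3$ is automatic from the construction, since the proportionality scalar $\lambda$ is absorbed into $r_3$ and never appears in any equation involving $x$.

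No step presents a real obstacle; the only point that deserves genuine care is the reduction step at the start, since it relies on Sinkhorn's uniqueness of the representation $S = RAC$ (to rule out the possibility of distinct proportional rows in $S$). The remainder is a routine algebraic manipulation that can be checked directly or with a computer algebra system.
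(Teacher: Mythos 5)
Your proof is correct and takes a genuinely different route from the paper's. The paper derives the corollary from Theorem~\ref{3 by 3 polynomial} by a perturbation-and-limit argument: it replaces the degenerate matrix by nearby matrices with no zero minors, applies the general degree-$6$ equation, divides out the vanishing factor $\det A_{\{2,3\},\{2,3\}}$ (after encoding $\det A = s\det A_{\{2,3\},\{2,3\}}$), and passes to the limit using continuity of Sinkhorn limits and of roots of polynomials. You instead work directly from the representation $S = RAC$: if rows $2$ and $3$ of $A$ are proportional, the same holds for rows $2$ and $3$ of $RAC$, and the row-sum constraint forces them to be equal, which collapses the problem to a single unknown $\alpha = r_1/r_2$. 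Solving the $j=1$ relation for $\alpha$ in terms of $x$ and substituting into $s_{12}+s_{13}=1-x$ produces the cubic directly. This is more elementary and self-contained (it does not rely on Theorem~\ref{3 by 3 polynomial} at all), and it gives a structural explanation of why the degree drops to $3$: the problem effectively reduces to a $2 \times 3$ scaling problem. Two small corrections to the write-up: (1) for the collapse of rows $2$ and $3$ you only need Sinkhorn's existence of the diagonal scalings $R,C$ with $S=RAC$, not uniqueness; and (2) after clearing the two denominators $a_{11}(1-x)a_{22}+x a_{21}a_{12}$ and $a_{11}(1-x)a_{23}+x a_{21}a_{13}$, there is no extraneous common factor to remove — one obtains $(e_3/a_{11})x^3+(e_2/a_{11})x^2+(e_1/a_{11})x+(e_0/a_{11})=0$ directly, which agrees with the stated equation only after multiplying through by $a_{11}$, an overall positive scalar that does not affect the root.
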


The analogous result holds if columns $2$ and $3$ are scalar multiples of each other.
For the matrix in Equation~\eqref{3 by 3 example matrix with row multiples}, Corollary~\ref{3 by 3 matrix with row multiples} gives $24 (3 x^3 - 25 x^2 + 48 x - 16) = 0$.

\begin{proof}[Proof of Corollary~\ref{3 by 3 matrix with row multiples}]
The idea is to carefully factor out the minors that are $0$.
First we describe how to obtain the polynomial $e_3 x^3 + e_2 x^2 + e_1 x + e_0$, and then we justify it.

Begin with a general $3 \times 3$ matrix~$A$ with symbolic entries;
in particular, there are no algebraic relations between the entries.
Let $r, s$ be symbols; eventually we will set $r$ to be the scalar factor $\frac{a_{31}}{a_{21}}$.
Apply Theorem~\ref{3 by 3 polynomial} to $A$ to obtain a polynomial equation satisfied by the top-left entry of $\Sink(A)$.
Then replace each instance of $\det A$ in this polynomial with $s \det A_{\{2, 3\}, \{2, 3\}}$.
By the definition of $M(S)$, each monomial now contains $\det A_{\{2, 3\}, \{2, 3\}}$ as factor; divide by this factor.
Finally, replace $a_{3,j}$ with $r a_{2,j}$ for each $j \in \{1, 2, 3\}$.
The resulting polynomial factors as a product two cubic polynomials in $x$.
One of these cubic factors is independent of $r$ and $s$;
this is the polynomial $e_3 x^3 + e_2 x^2 + e_1 x + e_0$.

To justify the construction, let $A$ now be the matrix in the statement of the corollary, let $r = \frac{a_{31}}{a_{21}}$ be the scalar factor, and fix a real number $s \neq 0$.
We approximate $A$ by matrices with no $0$ minors.
Namely, let $B(t)$ be a continuous \textcolor{Gray}{(}$3 \times 3$ matrix\textcolor{Gray}{)}-valued function such that $\lim_{t \to 0^+} B(t) = A$ and, for all $t > 0$, no minor of $B(t)$ is $0$.
Further, we assume for all $t > 0$ that $\frac{\det B(t)}{\det B(t)_{\{2, 3\}, \{2, 3\}}} = s$.
Apply Theorem~\ref{3 by 3 polynomial} to $B(t)$ where $t > 0$.
Since $\det B(t) = s \det B(t)_{\{2, 3\}, \{2, 3\}}$ by assumption, the polynomial given by Theorem~\ref{3 by 3 polynomial} is of the form $(\det B(t)_{\{2, 3\}, \{2, 3\}}) g(t, x)$.
The entries of $\Sink(B(t))$ are continuous functions of the entries of $B(t)$, and the roots of a polynomial are continuous functions of its coefficients, so $g(x) \colonequal \lim_{t \to 0^+} g(t, x)$ is a polynomial for the top-left entry of $\Sink(A)$.
Moreover, since only one of the two cubic factors of $g(x)$ is independent of $s$, the top-left entry of $\Sink(A)$ is a root of that cubic factor.
\end{proof}

\section{Square matrices and solving for coefficients}\label{Square matrices}

In this section, we use Theorem~\ref{3 by 3 polynomial} to infer the form of an equation satisfied by the top-left entry of the Sinkhorn limit of a positive $n \times n$ matrix.
We then interpolate the coefficients in the equation for $4 \times 4$ matrices to obtain Conjecture~\ref{4 by 4 polynomial - large}.

For a positive $2 \times 2$ matrix $A$, the top-left entry $x$ of $\Sink(A)$ satisfies Equation~\eqref{2 by 2 polynomial}, namely $\left(a_{11} a_{22} - a_{12} a_{21}\right) x^2 - 2 a_{11} a_{22} \, x + a_{11} a_{22} = 0$.
Multiplying by $a_{11}$, this can be written using $\Delta$ and $\Gamma$ as
\begin{equation}\label{2 by 2 polynomial - general form}
	\Delta\!\left(\!\begin{smallmatrix}
		\{\} \\
		\{\}
	\end{smallmatrix}\!\right)
	\Delta\!\left(\!\begin{smallmatrix}
		\{2\} \\
		\{2\}
	\end{smallmatrix}\!\right)
	x^2
	- 2
	\Delta\!\left(\!\begin{smallmatrix}
		\{\} \\
		\{\}
	\end{smallmatrix}\!\right)
	\Gamma\!\left(\!\begin{smallmatrix}
		\{2\} \\
		\{2\}
	\end{smallmatrix}\!\right)
	x
	+
	\Gamma\!\left(\!\begin{smallmatrix}
		\{\} \\
		\{\}
	\end{smallmatrix}\!\right)
	\Gamma\!\left(\!\begin{smallmatrix}
		\{2\} \\
		\{2\}
	\end{smallmatrix}\!\right)
	= 0
\end{equation}
or, equivalently,
\begin{equation}\label{2 by 2 polynomial - compact general form}
	M\!\left(\!\subsetmatrix{
		\{\} & \{2\} \\
		\{\} & \{2\}
	}\!\right)
	x^2
	- 2
	M\!\left(\!\subsetmatrix{
		\{\} \\
		\{\}
	}\!\right)
	x
	+
	M\!\left(\!\subsetmatrix{
		\vphantom{\{} \\
		\vphantom{\{}
	}\!\right)
	= 0.
\end{equation}
Along with Theorem~\ref{3 by 3 polynomial}, this suggests that, for an $n \times n$ matrix, the coefficient of $x^k$ is a $\Z$-linear combination of the monomials $M(S)$ where $\size{S} = k$.
In particular, we expect the degree of the equation to be $\size{D(n, n)} = \sum_{k = 0}^{n - 1} \binom{n - 1}{k}^2 = \binom{2 n - 2}{n - 1}$, so that generically the entries of $\Sink(A)$ for a $4 \times 4$ matrix have degree $\binom{6}{3} = 20$ and for a $5 \times 5$ matrix have degree $\binom{8}{4} = 70$.
We generalize the notation for the coefficients $c_S$ from the previous section to $c_S(n)$ for an $n \times n$ matrix, where $c_S(3) \colonequal c_S$.

\begin{conjecture}\label{polynomial form}
Let $n \geq 1$.
There exist integers $c_S(n)$, indexed by subsets $S \subseteq D(n, n)$, such that, for every positive $n \times n$ matrix~$A$, the top-left entry $x$ of $\Sink(A)$ satisfies
\begin{equation}\label{sum over subsets}
	\sum_{k = 0}^{\binom{2 n - 2}{n - 1}} \left(\sum_{\substack{S \subseteq D(n, n) \\ \size{S} = k}} c_S(n) M(S)\right) x^k = 0
\end{equation}
or, equivalently,
\[
	\sum_{S \subseteq D(n, n)} c_S(n) M(S) x^{\size{S}} = 0.
\]
\end{conjecture}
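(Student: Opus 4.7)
My plan is to combine an elimination-theoretic existence argument with a torus-equivariance argument that forces the precise combinatorial form, mirroring the strategy that succeeded for $n = 3$ in Theorem~\ref{3 by 3 polynomial}.

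\emph{Step 1 (existence of a defining polynomial).} Set up the polynomial system exactly as in Section~\ref{3 by 3}: variables $s_{ij}, r_i, c_j$ satisfying the $n^2$ scaling equations $s_{ij} = r_i a_{ij} c_j$ together with the $2n - 1$ independent doubly-stochastic equations. Classical elimination theory, applied to this system after homogenizing away the scaling freedom $(r_i, c_j) \mapsto (\lambda r_i, c_j / \lambda)$, guarantees the existence of a nonzero polynomial $P(x, A) \in \Q[a_{ij}][x]$ vanishing on $s_{11}$; among all such polynomials, choose a primitive integer representative of minimal $x$-degree.

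\emph{Step 2 (bihomogeneity from torus symmetry).} Sinkhorn's uniqueness characterization mentioned in the introduction immediately implies $\Sink(R A C) = \Sink(A)$ for every pair of positive diagonal matrices $R, C$, so $s_{11}$ is invariant under the $(2n - 1)$-dimensional torus acting on $A$ by $A \mapsto R A C$. Consequently, every coefficient of $x^k$ in $P$, regarded as a polynomial in the $a_{ij}$, must be bihomogeneous of some fixed degree $d_i^{(k)}$ in each row $i$ and $e_j^{(k)}$ in each column $j$. Comparing the extremal $x$-powers pins these multidegrees down: the coefficient of the highest $x$-power must be a scalar multiple of $M(D(n,n)) = \prod_{(R, C)} \Delta\!\left(\!\begin{smallmatrix} R \\ C \end{smallmatrix}\!\right)$, the coefficient of $x^0$ a scalar multiple of $M(\emptyset) = \prod_{(R, C)} \Gamma\!\left(\!\begin{smallmatrix} R \\ C \end{smallmatrix}\!\right)$, and these two products jointly force the degree bound $\size{D(n,n)} = \binom{2 n - 2}{n - 1}$.

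\emph{Step 3 (spanning by the $M(S)$).} The $\Delta$/$\Gamma$ notation is engineered so that $M(S)$ has exactly the bihomogeneous multidegree demanded by Step 2 whenever $\size{S} = k$. It remains to show that every bihomogeneous polynomial of that multidegree lies in the $\Z$-span of $\{M(S) : \size{S} = k\}$. A natural route is via the straightening law for bideterminants, combined with the first fundamental theorem of invariant theory for the diagonal torus of $\mathrm{GL}_n \times \mathrm{GL}_n$ acting on the matrix entries: these give a spanning set of each torus weight space in terms of products of minors, and one would then rewrite such products as $\Z$-combinations of the specific products $M(S)$ using Pl\"ucker-type identities.

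The principal obstacle is Step 3. Even for $n = 3$ the authors already observed a nontrivial six-term relation among the $M(S)$ (just before Theorem~\ref{3 by 3 polynomial}), and only after imposing the row, column, and transposition symmetries did they obtain a canonical representation. For larger $n$ the ambient bihomogeneous weight spaces and their syzygies grow rapidly, and controlling them in a uniform combinatorial way is the heart of the difficulty; separating genuinely $n$-independent structure from the case-by-case $n = 3, 4$ evidence appears to require new ideas beyond the direct Gr\"obner computation that sufficed for $n = 3$. A secondary difficulty is integrality of the coefficients $c_S(n)$, which one hopes falls out of primitivity of the eliminant but which is not immediate from a merely rational-coefficient spanning argument.
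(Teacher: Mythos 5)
The statement you are attempting to prove is a \emph{conjecture} in the paper, not a theorem; the authors offer only computational evidence for it (Gr\"obner bases for $n \leq 3$, PSLQ interpolation for $n = 4, 5$), so there is no ``paper's own proof'' to compare against. Your sketch is an interesting candidate strategy, and you are right to flag Step~3 as the principal difficulty. However, there is an earlier and unacknowledged gap in Step~2 that undermines the whole plan.

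Step~2 claims that torus bihomogeneity, after pinning down the extremal $x$-powers, forces the multidegree structure, and then Step~3 says that $M(S)$ with $\size{S} = k$ ``has exactly the bihomogeneous multidegree demanded by Step~2 whenever $\size{S} = k$.'' This presupposes that the multidegree of $M(S)$ depends on $\size{S}$. It does not. For a fixed $(R, C)$, the expressions
$\Delta\!\left(\!\begin{smallmatrix} R\vphantom{\{} \\ C\vphantom{\{}\end{smallmatrix}\!\right) = \det A_{\{1\}\cup R,\, \{1\}\cup C}$
and
$\Gamma\!\left(\!\begin{smallmatrix} R\vphantom{\{} \\ C\vphantom{\{}\end{smallmatrix}\!\right) = a_{11}\det A_{R, C}$
have \emph{identical} row and column multidegree --- each is degree $1$ in every row of $\{1\}\cup R$ and every column of $\{1\}\cup C$ and degree $0$ elsewhere (this is precisely why the paper multiplies by $a_{11}$ in the definition of $\Gamma$). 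Consequently $M(S)$ has the same multidegree for \emph{every} $S \subseteq D(n, n)$. The torus-equivariance argument correctly shows that all coefficients $p_k(A)$ of $P(x, A)$ share a common multidegree, but it cannot single out $M(D(n,n))$ as the leading coefficient, cannot bound the $x$-degree by $\size{D(n,n)} = \binom{2n-2}{n-1}$, and --- most importantly --- cannot explain why the coefficient of $x^k$ should be supported on $\{M(S) : \size{S} = k\}$. That last restriction is the genuine content of the conjecture and needs a completely different mechanism, one the paper itself does not supply.

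In summary: you correctly diagnose Step~3 as hard, and your honesty about the secondary integrality issue is appropriate, but Step~2 already fails as stated because all $M(S)$ are isobaric under the torus action. Any successful proof would need a source of information beyond bihomogeneity (for example, a finer grading, a resultant/hyperdeterminant structure controlling the $x$-degree, or a direct combinatorial model of the elimination ideal) that explains the correlation between the exponent $k$ and the set size $\size{S}$.
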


This conjecture implies that, since both
$
	\Delta\!\left(\!\begin{smallmatrix}
		R\vphantom{\{} \\
		C\vphantom{\{}
	\end{smallmatrix}\!\right)
$ and $
	\Gamma\!\left(\!\begin{smallmatrix}
		R\vphantom{\{} \\
		C\vphantom{\{}
	\end{smallmatrix}\!\right)
$
are homogeneous degree-$(\size{R} + 1)$ polynomials in the entries of $A$, each monomial $M(S)$ (and therefore also the coefficient of each $x^k$ in Equation~\eqref{sum over subsets}) is a homogeneous polynomial with degree
\[
	\sum_{(R, C) \in D(n, n)} (\size{R} + 1)
	= \sum_{k = 0}^{n - 1} \binom{n - 1}{k}^2 (k + 1)
	= \frac{n + 1}{2} \binom{2 n - 2}{n - 1}.
\]
For $n = 1, 2, 3, \dots$, this degree is $1, 3, 12, 50, 210, 882, 3696, 15444, \dots$~\seq{A092443}.

The coefficients $c_S(n)$ in Conjecture~\ref{polynomial form} are not uniquely determined, since we can scale the polynomial.
To remove this source of non-uniqueness, we define $c_{\{\}}(n) = 1$ for all $n \geq 1$.
Based on the polynomials for $n = 2$ and $n = 3$, we conjecture that $c_{D(n, n)}(n) = 1$ for all $n \geq 2$.

We mention two surprising properties of the polynomial in Theorem~\ref{3 by 3 polynomial}.
These properties also hold for the quadratic polynomial in Equation~\eqref{2 by 2 polynomial - general form}, so we expect them to hold more generally for the polynomial in Conjecture~\ref{polynomial form}.
The first is a symmetry in the coefficients.
Each coefficient $d_k$ in Theorem~\ref{3 by 3 polynomial} can be obtained from $d_{6 - k}$ by replacing
\[
	\Delta\!\left(\!\begin{smallmatrix}
		R\vphantom{\{} \\
		C\vphantom{\{}
	\end{smallmatrix}\!\right)
	\mapsto
	\Gamma\!\left(\!\begin{smallmatrix}
		\{2, 3\} \setminus R \\
		\{2, 3\} \setminus C
	\end{smallmatrix}\!\right)
	\quad \text{and} \quad
	\Gamma\!\left(\!\begin{smallmatrix}
		R\vphantom{\{} \\
		C\vphantom{\{}
	\end{smallmatrix}\!\right)
	\mapsto
	\Delta\!\left(\!\begin{smallmatrix}
		\{2, 3\} \setminus R \\
		\{2, 3\} \setminus C
	\end{smallmatrix}\!\right).
\]

\begin{conjecture}\label{symmetry}
Let $n \geq 2$.
For each $(R, C) \in D(n, n)$, define
$
	\kappa\!\left(\!\begin{smallmatrix}
		R\vphantom{\{} \\
		C\vphantom{\{}
	\end{smallmatrix}\!\right)
	=
	\begin{smallmatrix}
		\{2, 3, \dots, n\} \setminus R \\
		\{2, 3, \dots, n\} \setminus C
	\end{smallmatrix}
$,
and extend $\kappa$ to subsets of $D(n, n)$ by
\[
	\kappa\!\left(\!\begin{smallmatrix}
		R_1 & R_2 & \cdots & R_k\vphantom{\{} \\
		C_1 & C_2 & \cdots & C_k\vphantom{\{}
	\end{smallmatrix}\!\right)
	= \left\{
		\kappa\!\left(\!\begin{smallmatrix}
			R_1\vphantom{\{} \\
			C_1\vphantom{\{}
		\end{smallmatrix}\!\right),
		\kappa\!\left(\!\begin{smallmatrix}
			R_2\vphantom{\{} \\
			C_2\vphantom{\{}
		\end{smallmatrix}\!\right),
		\dots,
		\kappa\!\left(\!\begin{smallmatrix}
			R_k\vphantom{\{} \\
			C_k\vphantom{\{}
		\end{smallmatrix}\!\right)
	\right\}.
\]
For each $S \subseteq D(n, n)$, we have $c_S(n) = c_{\kappa(D(n, n) \setminus S)}(n)$.
\end{conjecture}

The second is that, for each $k$, the sum of the coefficients $c_S(n)$ for $\size{S} = k$ is a signed binomial coefficient.
For example, the coefficients in the linear combination~\eqref{a11 b1 linear combination} for $d_1$ sum to $-3 - 1 - 1 - 1 - 1 + 1 = -6 = -\binom{6}{1}$.

\begin{conjecture}\label{binomial sum}
For each $n \geq 1$, we have
\[
	\sum_{S \subseteq D(n, n)} c_S(n) x^{\size{S}} = (x - 1)^{\binom{2 n - 2}{n - 1}}.
\]
\end{conjecture}

We do not have explanations for either of these properties.

It remains to determine the coefficients $c_S(n)$.
For $n = 3$, we determined the coefficients $c_S(3)$ from the output of a Gr\"obner basis computation, but for $n \geq 4$ this computation seems to be infeasible.
For a general $4 \times 4$ matrix, we aborted the computation after $1$ week.

Instead, we generate many pseudorandom $n \times n$ matrices $A$, identify the top-left entry of $\Sink(A)$ as an algebraic number for each, and set up systems of linear equations in $c_S(n)$.
We describe these three steps next.

For the first step, we generate matrices with entries from $\{1, 2, \dots, 20\}$.
Since the entries are integers, each $M(S)$ is also an integer; this will be important in the third step.
For each matrix, we check that none of its minors are $0$, since a $0$ minor implies $M(S) = 0$ for several $S$, removing the dependence on the corresponding coefficients $c_S(n)$.
If any minors are $0$, we discard that matrix.

In the second step, for each matrix~$A$ generated in the first step, we determine a polynomial equation satisfied by the top-left entry $x$ of $\Sink(A)$.
There are two possible methods.
One method is to use Gr\"obner bases;
this is faster than the Gr\"obner basis computation for a matrix with symbolic entries, but for $n \geq 5$ it is still slow.
Therefore we use another method, which is to apply the iterative scaling process to obtain a numeric approximation to $\Sink(A)$ and then guess a polynomial for its top-left entry.
We begin by numericizing the integer entries of $A$ to high precision.
For $n = 4$ we use precision $2^{12}$, and for $n = 5$ we use precision $2^{15}$.
Then we iteratively scale until we reach a fixed point.
The precision of the entries drops during the scaling process, but with the initial precisions $2^{12}$ and $2^{15}$ we get entries with sufficiently high precision that we can reliably recognize them using PSLQ.
The expected degree of the polynomial is $d = \binom{2 n - 2}{n - 1}$ according to Conjecture~\ref{polynomial form}.
Building in redundancy, we use Mathematica's \texttt{RootApproximant}~\cite{RootApproximant} to approximate $x$ by an algebraic number with target degree $d + 2$.
When the output has degree $d$, which is almost always the case, this is strong evidence that the approximation is in fact the exact algebraic number we seek.
Occasionally the output has degree less than $d$, in which case we discard the matrix; for example, one $4 \times 4$ matrix produced a polynomial with degree $8$ rather than $20$, presumably because the general degree-$20$ polynomial, when evaluated at the entries of the matrix, is reducible.
Finally, we perform a check on the output by computing the ratio of its leading coefficient to its constant coefficient.
This ratio should be $\frac{M(D(n, n))}{M(\{\})}$, assuming Conjecture~\ref{polynomial form} is correct and $c_{D(n, n)}(n) = 1$.
All outputs passed this test.
We record the matrix~$A$ along with the polynomial equation satisfied by $x$, and this will give us $1$ equation in the third step.
(In fact we can get $n^2$ equations by applying PSLQ to each entry of the numeric approximation to $\Sink(A)$ and recording each polynomial along with the matrix obtained by swapping the appropriate rows and columns of $A$.
For $n = 5$ this is worthwhile, since our implementation of the iterative scaling process takes roughly $3$ minutes to reach a fixed point with initial precision $2^{15}$.)

The third step is to determine the coefficients $c_S(n)$ in the coefficient of $x^k$ in Conjecture~\ref{polynomial form}.
For a given $k$, we do this by solving a system of linear equations involving the coefficients $c_S(n)$ where $\size{S} = k$.
We assume $c_T(n) = c_S(n)$ if $T \equiv S$, so it suffices to determine $c_S(n)$ for one representative $S$ of each equivalence class, analogous to Theorem~\ref{3 by 3 polynomial}.
This reduces the number of unknown coefficients, which reduces the number of equations we need, which reduces the number of matrices $A$ we apply the iterative scaling process to in the second step above.
However, first we must partition $\{S \subseteq D(n, n) : \size{S} = k\}$ into its equivalence classes under $\equiv$.
Some care must be taken to do this efficiently;
we make use of the fact that row permutations commute with column permutations, so it suffices to apply row permutations first.
Once we have computed the equivalence classes, we take each polynomial computed in the second step above, scale it by an integer so that its leading coefficient is $M(D(n, n))$ (and therefore its constant coefficient is $M(\{\})$), extract the coefficient of $x^k$, and set this coefficient equal to $\sum_S c_S(n) \Sigma(S)$ where the sum is over one representative from each equivalence class of size-$k$ subsets.
The number of equivalence classes tells us how many such equations we need in order to solve for the unknown coefficients $c_S(n)$.
We include more equations than necessary in the system, building in redundancy, so that if a solution is found then we can be confident that the conjectured form is correct.
Then we solve the system.
In practice, even setting up the system can be computationally expensive.
For $n = 5$ and $k = 4$, our initial implementation took $6$ days to set up the system (before solving!)\ because there are $1518$ unknown coefficients $c_S(n)$, so we need at least that many equations, and each equation involves $\binom{70}{4} = 916895$ monomials, each of which is a product of $\binom{8}{4} = 70$ determinants.
For small $k$, a more efficient way to construct each equation is to take advantage of the fact that, for each pair $S, T$ of subsets of $D(5, 5)$, the products $M(S)$ and $M(T)$ have most factors in common, and almost all are $\Gamma$ factors.
Therefore, we can reduce the number of determinant computations by dividing both sides by
$
	M(\{\})
	= \prod_{(R, C) \in D(5, 5)}
	\Gamma\!\left(\!\begin{smallmatrix}
		R\vphantom{\{} \\
		C\vphantom{\{}
	\end{smallmatrix}\!\right)
$,
which is nonzero by the assumption that no minor is $0$.
On the left, we compute $M(\{\})$ once and divide the extracted coefficient of $x^k$ by $M(\{\})$.
On the right, we divide each $M(S)$ by $M(\{\})$.
Instead of computing $M(S)/M(\{\})$ from definitions, we precompute the ratio
$
	\Delta\!\left(\!\begin{smallmatrix}
		R\vphantom{\{} \\
		C\vphantom{\{}
	\end{smallmatrix}\!\right)
	/
	\Gamma\!\left(\!\begin{smallmatrix}
		R\vphantom{\{} \\
		C\vphantom{\{}
	\end{smallmatrix}\!\right)
$
for each $(R, C) \in D(5, 5)$; then, for each $S$ with $\size{S} = k$, we use the precomputed ratios to compute
\[
	\frac{M(S)}{M(\{\})}
	=
	\prod_{(R, C) \in S}
	\frac{\Delta\!\left(\!\begin{smallmatrix}
		R\vphantom{\{} \\
		C\vphantom{\{}
	\end{smallmatrix}\!\right)}
	{\Gamma\!\left(\!\begin{smallmatrix}
		R\vphantom{\{} \\
		C\vphantom{\{}
	\end{smallmatrix}\!\right)}.
\]
This product can be computed quickly small since $k$ is small.
(This shortcut is also used by \textsc{SinkhornPolynomials}~\cite{SinkhornPolynomials} to compute polynomials more quickly when no minor is $0$.)

We now carry out these three steps for $n = 4$ to obtain a polynomial for the top-left entry $x$ of $\Sink(A)$ for $4 \times 4$ matrices $A$.
This polynomial is analogous to the polynomial in Theorem~\ref{3 by 3 polynomial} for $3 \times 3$ matrices.
The number of equivalence classes of size-$k$ subsets of $D(4, 4)$ for $k = 0, 1, \dots, 20$ is
\[
	1, 4, 12, 40, 123, 324, 724, 1352, 2108, 2760, 3024, 2760, \dots, 4, 1.
\]
The number of unknown coefficients $c_S(4)$ is the sum of these numbers, which is $17920$.
We use the definition $c_{\{\}}(4) = 1$ and the conjecture $c_{D(4, 4)}(4) = 1$, and we solve the remaining $19$ systems of linear equations, the largest of which requires $3024$ equations.
Unfortunately, for each $k \in \{4, 5, \dots, 16\}$ the system has multiple solutions.
For example, when we solve the system for $k = 4$, only $104$ of the $123$ unknown coefficients $c_S(4)$ with $\size{S} = 4$ are uniquely determined;
the other $19$ are parameterized by $2$ free variables.
For $k = 10$, the solution space has dimension $1141$.
This is not a weakness of the interpolation strategy but rather implies that the coefficients of $x^4, x^5, \dots, x^{16}$ have multiple representations as linear combinations of class sums $\Sigma(S)$ and therefore that relations exist among these class sums.
However, by setting the free variables to $0$ (or any other values), we obtain numeric coefficients $c_S(4)$.

\begin{conjecture}\label{4 by 4 polynomial - large}
There are $17920$ rational numbers
\begin{align*}
	c_{D(4, 4)}(4) &= 1 \\
	&\;\; \vdots \\
	c_{\subsetmatrix{
		\{\} \\
		\{\}
	}}\!(4) &= -4 \\
	c_{\subsetmatrix{
		\{2\} \\
		\{2\}
	}}\!(4) &= -2 \\
	c_{\subsetmatrix{
		\{2, 3\} \\
		\{2, 3\}
	}}\!(4) &= 0 \\
	c_{\subsetmatrix{
		\{2, 3, 4\} \\
		\{2, 3, 4\}
	}}\!(4) &= 2 \\
	c_{\{\}}(4) &= 1
\end{align*}
such that the top-left entry $x$ of the Sinkhorn limit of a $4 \times 4$ matrix satisfies
\[
	\sum_{k = 0}^{20} \left(\sum_S c_S(4) \Sigma(S)\right) x^k = 0
\]
where the inner sum is over one representative $S$ from each equivalence class of size-$k$ subsets of $D(4, 4)$.
\end{conjecture}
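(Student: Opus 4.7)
The plan is to promote this conjecture from a numerically-interpolated formula to a rigorous theorem on all positive $4 \times 4$ matrices. Let $P(x, A) = \sum_{S \subseteq D(4, 4)} c_S(4) \, M(S) \, x^{\size{S}}$ denote the polynomial with the conjectured coefficients, and let $I$ be the ideal in $\Q[a_{ij}, r_i, c_j, s_{ij}]$ generated by the $9$ scaling relations $s_{ij} - r_i a_{ij} c_j$ together with $7$ independent doubly stochastic constraints (the $8$-th is redundant). The conjecture is equivalent to the ideal membership $P(s_{11}, A) \in I$ after saturation by the $a_{ij}$ (which are positive, hence nonzero). The direct route, a Gr\"obner basis elimination of the $14$ auxiliary variables, is exactly what the authors tried and aborted after a week, so the proof must avoid this.

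The first reduction I would make is to replace elimination by verification on a parameterization. Since every positive $A$ factors as $A = R^{-1} S C^{-1}$ with $R, C$ positive diagonal and $S$ positive doubly stochastic, I would substitute $a_{ij} = s_{ij}/(r_i c_j)$ and $x = s_{11}$ into $P$, parameterize the $16$-dimensional space of positive $4 \times 4$ doubly stochastic matrices by the entries of the upper-left $3 \times 3$ block of $S$ (the remaining entries being linear in these), and quotient by the simultaneous rescaling $(R, C) \mapsto (t R, t^{-1} C)$. The claim then reduces to showing that the numerator of $P$, now a polynomial in $16$ parameters, is identically zero. This is a purely symbolic polynomial identity, sidestepping the elimination bottleneck but at the cost of an enormous expression built from $2^{16}$ terms, each a product of $36$ minors. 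To make this tractable, I would exploit the bigraded structure: $P$ is homogeneous of known bidegree in the rows and columns of $A$, and the symmetries $R \to $ permutation, $C \to$ permutation, $A \to A\tr$ restrict the certificate to a fundamental domain, cutting the number of terms dramatically.

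A second, complementary angle is to construct an explicit Nullstellensatz-style cofactor representation $P = \sum g_\alpha h_\alpha$ with $g_\alpha$ the generators of $I$; verifying such a certificate is polynomial in the data even when finding it is not. The non-uniqueness the authors observed (free parameters in each linear system) is an asset here, because it indicates that the syzygies among the monomials $M(S)$ are the obstruction to direct symbolic verification, and these syzygies can be computed separately from the much smaller $3 \times 3$ Gr\"obner basis by specialization. Concretely, I would first derive the full family of algebraic relations among the $\Sigma(S)$ for $n = 4$ by specializing to one-parameter deformations of degenerate matrices (analogous to the proof of Corollary~\ref{3 by 3 matrix with row multiples}) and then use these relations to reduce the verification polynomial to a canonical form. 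The main obstacle is plainly computational, not conceptual: even after all the symmetry reductions, the identity to verify involves tens of thousands of rational monomials in sixteen variables, and the bottleneck will be engineering an intermediate representation (likely via modular reduction and rational reconstruction) small enough to certify the identity in finite memory. Without such an engineering breakthrough, the conjecture is likely to remain a conjecture even though the formula itself is fully explicit.
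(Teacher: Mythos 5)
Your proposal targets a goal the paper itself does not attain: Conjecture~\ref{4 by 4 polynomial - large} is stated and left as a \emph{conjecture}, not proved. The paper arrives at the $17920$ coefficients $c_S(4)$ by numerical interpolation, as described at length in Section~\ref{Square matrices}: generate pseudorandom positive integer $4 \times 4$ matrices with no vanishing minors, iteratively scale each to high precision, recognize the top~left entry of the Sinkhorn limit as a degree-$20$ algebraic number via PSLQ/\texttt{RootApproximant}, then for each exponent $k$ solve an overdetermined linear system for the class-sum coefficients under the ansatz of Conjecture~\ref{polynomial form} and the symmetry constraint $c_T(4) = c_S(4)$ when $T \equiv S$, setting residual free variables to zero. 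That is an evidence-gathering and interpolation procedure, not a proof; the paper is explicit that the general $4 \times 4$ Gr\"obner basis elimination was aborted after a week. Your proposal — verifying the identity by parameterizing $a_{ij} = s_{ij}/(r_i c_j)$, or producing an explicit Nullstellensatz cofactor certificate — is a genuinely different route, and if completed it would upgrade the conjecture to a theorem. But by your own final sentence you concede the execution is infeasible, so what you have is a research programme, not a proof, and thus no more a proof than the paper's interpolation is.

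Beyond the admitted infeasibility, several specifics in your sketch need repair before it could be carried out. The positive $4 \times 4$ doubly stochastic matrices form a $9$-parameter family, not $16$; the $16$ you cite is the dimension of $A$ itself, and the upper-left $3 \times 3$ block gives exactly the $9$ free entries of $S$, with the remaining $7$ degrees of freedom coming from $(R, C)$ modulo the $t R, t^{-1} C$ scaling. The number of monomials $M(S)$ is $2^{20}$, not $2^{16}$, since $\lvert D(4,4)\rvert = \binom{6}{3} = 20$; and each $M(S)$ is a product of $20$ factors (one per element of $D(4,4)$), not $36$. Most seriously, the assertion that the $n = 4$ syzygies among the class sums $\Sigma(S)$ can be derived ``by specialization'' from the $n = 3$ computation is unsupported: the $n = 3$ relation quoted in Section~\ref{3 by 3} lives in a different polynomial ring, and the paper establishes only that nontrivial kernels exist at $n = 4$ (e.g.\ a $1141$-dimensional solution space for $k = 10$), without exhibiting a method for generating the relations independently. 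Absent a concrete cofactor representation or a completed identity verification, the proposal does not close the conjecture; it argues plausibly that a closure might exist.
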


\begin{example}
Consider the matrix
\[
	A =
	\begin{bmatrix}
		3 & 1 & 2 & 2 \\
		2 & 2 & 2 & 1 \\
		1 & 2 & 3 & 2 \\
		1 & 4 & 2 & 3
	\end{bmatrix},
\]
and let $x$ be the top-left entry of $\Sink(A)$.
Conjecture~\ref{4 by 4 polynomial - large} gives
\small
\begin{multline*}
	382625520076800 x^{20} - 15753370260418560 x^{19} + 224644720812019200 x^{18} \\
	- 1949693785825830912 x^{17} + 11625683820163305984 x^{16} - 50547801347982259200 x^{15} \\
	+ 165827284134596798976 x^{14} - 419342005165888558080 x^{13} + 828111699533723747328 x^{12} \\
	- 1284220190788992755712 x^{11} + 1558933050581256001536 x^{10} - 1456458194243244008448 x^9 \\
	+ 999710159534823121920 x^8 - 435645828109071673344 x^7 + 31060141423020794880 x^6 \\
	+ 122853118332060905472 x^5 - 110123924197151416320 x^4 + 53612068706701295616 x^3 \\
	- 16383341182381572096 x^2 + 2975198930601246720 x - 246790694704250880
	= 0.
\end{multline*}
\normalsize
\end{example}

The values of the coefficients $c_S(4)$ in Conjecture~\ref{4 by 4 polynomial - large} are not all canonical, since we don't have natural conditions under which they are uniquely determined.
However, we continue under the assumption that there is a unique natural function $c_S(n)$ and seek to identify it.

In principle, we can use the same method to interpolate a polynomial equation for $n \times n$ matrices for any given $n$.
However, the computation is formidable.
For $5 \times 5$ matrices, we were able to compute the coefficients for $k = 4$ and $k = 5$ (where the families of representations are respectively $8$-dimensional and $44$-dimensional), but for $k = 6$ we could not get the computation of equivalence classes to finish (aborted after $2$ weeks).
Completing the computation up to the halfway point $k = \frac{1}{2} \binom{8}{4} = 35$, after which we could use the symmetry in Conjecture~\ref{symmetry}, seems to be infeasible.

For $6 \times 6$ matrices, PSLQ must recognize algebraic numbers with degree $\binom{10}{5} = 252$.
We couldn't get any of these computations to finish with sufficiently high precision.

\section{Rectangular matrices and combinatorial structure}\label{Rectangular matrices and combinatorial structure}

The coefficients $c_S(n)$ for $n \in \{2, 3, 4, 5\}$ that we have computed so far are not enough to guess the general formula for $c_S(n)$.
In this section, we expand the scope to include matrices that are not square.
This allows us to compute enough coefficients to identify formulas in special cases.
These formulas reveal the relevant combinatorial structure on subsets $S$, and this allows us to piece together a general formula for the coefficients, resulting in Conjecture~\ref{general polynomial}.

Doubly stochastic matrices are necessarily square.
This is because, in every matrix, the sum of the row sums is equal to the sum of the column sums.
Therefore we must generalize how we scale.

\begin{definition*}
Let $A$ be a positive $m \times n$ matrix.
The \emph{Sinkhorn limit} of $A$ is the matrix obtained by iteratively scaling so that each row sum is $1$ and each column sum is $\frac{m}{n}$.
Its existence was established (in a more general form) by Sinkhorn in a 1967 paper~\cite{Sinkhorn 1967}.
\end{definition*}

Conjecture~\ref{polynomial form} generalizes to $m \times n$ matrices as follows.
We extend the coefficients $c_S(n)$ in the previous section to coefficients $c_S(m, n)$, where $c_S(n, n) = c_S(n)$.
For each $m$ and $n$, we scale the coefficients so that $c_{\{\}}(m, n) = 1$.

\begin{conjecture}\label{general polynomial form}
Let $m \geq 1$ and $n \geq 1$.
There exist rational numbers $c_S(m, n)$, indexed by subsets $S \subseteq D(m, n)$, such that, for every positive $m \times n$ matrix~$A$, the top-left entry $x$ of $\Sink(A)$ satisfies
\[
	\sum_{S \subseteq D(m, n)} c_S(m, n) M(S) x^{\size{S}} = 0.
\]
\end{conjecture}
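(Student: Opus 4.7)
The plan is to follow the Gröbner template of Section~\ref{3 by 3} but carry out the elimination in full generality, attacking the conjecture in two stages: algebraicity with the degree bound $\binom{m + n - 2}{m - 1}$, and then the structural form of the coefficients. First I would reduce the $mn + m + n$ scaling and marginal equations to a smaller toric system: set $u_i = r_i / r_1$, $v_j = c_j / c_1$ (so $u_1 = v_1 = 1$) and $x = s_{11} = r_1 a_{11} c_1$. The row-sum equations solve each $u_i$ as a rational function of $(x, v_2, \ldots, v_n)$, and substituting these into the column-sum equations yields $n - 1$ independent rational equations in $x$ and $v_2, \ldots, v_n$. Sinkhorn's existence and uniqueness~\cite{Sinkhorn 1967} guarantees a unique positive solution, so elimination of $v_2, \ldots, v_n$ produces a polynomial equation $p(x; A) = 0$, establishing algebraicity.

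For the degree bound, I would apply the Bernstein--Kushnirenko theorem to the reduced system. After clearing denominators, the column-sum equations are multilinear with Newton polytopes that are products of simplices, and I expect the corresponding mixed-volume calculation to evaluate to $\binom{m + n - 2}{m - 1} = \size{D(m, n)}$. This number equals the count of lattice paths from $(0, 0)$ to $(m - 1, n - 1)$, matching the combinatorial interpretation the authors emphasize, so a direct path-counting or sparse-resultant argument should give the matching upper bound.

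The main obstacle is showing that, for each $k$, the coefficient of $x^k$ in $p$ is a $\mathbb{Q}$-linear combination of $\{M(S) : \size{S} = k\}$. Two structural constraints come for free: each $M(S)$ is $a$-homogeneous of the same total degree $\sum_{(R, C) \in D(m, n)}(\size{R} + 1)$, forcing each coefficient of $p$ (after uniform scaling) to be homogeneous of that same degree; and the invariance of $x$ under row and column permutations fixing row~$1$ and column~$1$ transfers to the coefficients. These alone do not pin down the form. My tentative line of attack would be an inductive elimination using Cramer's rule: each intermediate equation is multilinear in the remaining $v_j$'s, so each intermediate resultant factors through determinants of submatrices of $A$, and the content of the conjecture is that these factorizations organize themselves along subsets $S \subseteq D(m, n)$ rather than some coarser indexing set. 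Why that specific organization should occur, rather than the minors combining into more general polynomials in the $a_{ij}$, is the step I do not see how to prove a priori; the $3 \times 3$ case was recognized only empirically from Gröbner output, and absent a structural argument the remaining route is the one the authors pursue---computing many cases via Gröbner bases or PSLQ and interpolating---which proves the statement only for fixed small $(m, n)$ and not uniformly.
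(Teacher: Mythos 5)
The statement you are trying to prove is Conjecture~\ref{general polynomial form}, which the paper itself does not prove; it is offered as a conjecture supported by empirical evidence (Gr\"obner computation for $3 \times 3$, PSLQ interpolation for $4 \times 4$, and partial data for larger sizes). So there is no paper proof to compare against, and your proposal should be judged on its own terms.

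Your sketch is candid that it is incomplete, and your self-assessment is accurate. Two specific issues: First, you present ``algebraicity'' as established by Sinkhorn's existence/uniqueness plus elimination, which is fine, but that is a much weaker claim than the conjecture; elimination theory alone says nothing about the degree $\binom{m+n-2}{m-1}$ or the minor-product form. Second, the Bernstein--Kushnirenko/mixed-volume route to the degree bound is genuinely an idea the paper does not discuss and could be worth pursuing, but as written it is an assertion that the mixed volume ``should'' evaluate to $\binom{m+n-2}{m-1}$; you would have to actually compute the Newton polytopes of the cleared column-sum equations, verify the mixed volume, and then argue that the eliminant's degree in $x$ meets rather than merely bounds that count (generically). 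Neither step is carried out. The central gap --- why the coefficient of $x^k$ should lie in the $\Q$-span of $\{M(S) : \size{S} = k\}$ rather than some larger space of homogeneous, symmetry-invariant polynomials in the $a_{ij}$ --- is exactly the part you acknowledge you cannot prove, and it is also exactly the part the authors could not prove; they discovered the minor structure by inspection of the $3 \times 3$ Gr\"obner output and then conjectured it outward. Your Cramer's-rule/inductive-elimination heuristic is reasonable intuition for why determinants appear, but ``resultants of multilinear systems involve determinants'' does not force the precise indexing by $S \subseteq D(m,n)$, and you correctly flag this. In short, the proposal is a plausible research plan for attacking an open conjecture, not a proof, and it does not close the gaps that leave the statement conjectural in the paper.
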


To interpolate values of $c_S(m, n)$, we use the method described in Section~\ref{Square matrices}.
Overall, we spent $1.5$~years of CPU time iteratively scaling matrices of various dimensions and recognizing $102000$ algebraic numbers.
An additional month of CPU time was spent setting up and solving systems of linear equations in the coefficients $c_S(m, n)$, which resulted in the identification of $63000$ rational coefficients (and an additional $56000$ coefficients parameterized by free variables).

Rather than fixing $m$ and $n$ and varying $S$ as in previous sections, we change our perspective now by fixing $S$ and working to identify $c_S(m, n)$ as a function of $m$ and $n$.
Accordingly, we define $c_S(m, n)$ to be $0$ if $S \nsubseteq D(m, n)$ (that is, $S$ contains row or column indices that are larger than an $m \times n$ matrix supports).
We begin with subsets $S$ where $\size{S} = 1$.

\begin{example}
Let $S = \subsetmatrix{ \{\} \\ \{\} }$.
Equation~\eqref{2 by 2 polynomial - compact general form} implies $c_S(2, 2) = -2$, Theorem~\ref{3 by 3 polynomial} implies $c_S(3, 3) = -3$, and Conjecture~\ref{4 by 4 polynomial - large} implies $c_S(4, 4) = -4$.
The values of $c_S(m, n)$ we computed for several additional matrix sizes appear in the following table.
\small
\[
	\begin{array}{r|rrrr}
		& n = 1 & 2 & 3 & 4 \\
		\hline
		m = 1 & -1 & -2 & -3 & -4 \\
		2 & -1 & -2 & -3 & -4 \\
		3 & -1 & -2 & -3 & -4 \\
		4 & -1 & -2 & -3 & -4
	\end{array}
\]
\normalsize
This suggests the formula $c_S(m, n) = -n$.
\end{example}

There is an asymmetry in our definition of $\Sink(A)$ for non-square matrices, since $\Sink(A)$ has row sums $1$ and column sums $\frac{m}{n}$.
However, we expect some symmetry in the coefficients $c_S(m, n)$ since $m \Sink(A\tr) = n \Sink(A)\tr$.
We can obtain this symmetry by considering $\frac{1}{m} \Sink(A)$, which has row sums $\frac{1}{m}$ and column sums $\frac{1}{n}$.
The form of the polynomial for the top-left entry $y$ of $\frac{1}{m} \Sink(A)$ can be obtained from Conjecture~\ref{general polynomial form} by substituting $x = m y$.
Therefore, the coefficients $c_S(m, n)$ should satisfy $m^{\size{S}} c_S(m, n) = n^{\size{S}} c_{S\tr}(n, m)$, where $S\tr$ is defined by
\[
	\left(\subsetmatrix{
		R_1 & R_2 & \cdots & R_k\vphantom{\{} \\
		C_1 & C_2 & \cdots & C_k\vphantom{\{}
	}\right)\tr
	=
	\subsetmatrix{
		C_1 & C_2 & \cdots & C_k\vphantom{\{} \\
		R_1 & R_2 & \cdots & R_k\vphantom{\{}
	}.
\]

\begin{example}
As in the previous example, let $S = \subsetmatrix{ \{\} \\ \{\} }$.
Since $S\tr = S$, we have $m c_S(m, n) = n c_S(n, m)$.
In other words, $m c_S(m, n)$ is a symmetric function of $m$ and $n$.
Indeed, the table of values suggests $m c_S(m, n) = -m n$.
\end{example}

The coefficients for other size-$1$ subsets $S$ also seem to be simple polynomial functions of $m$ and $n$.

\begin{example}
For $S = \subsetmatrix{ \{2\} \\ \{2\} }$, the data suggests $m c_S(m, n) = m + n - m n$ (for all $m \geq 2$ and $n \geq 2$).
For $S = \subsetmatrix{ \{2\} \\ \{3\} }$, the data suggests the same formula $m c_S(m, n) = m + n - m n$; indeed this is expected, because $\subsetmatrix{ \{2\} \\ \{2\} } \equiv \subsetmatrix{ \{2\} \\ \{3\} }$.
Since every size-$1$ subset
$
	\subsetmatrix{
		R \vphantom{\{} \\
		C \vphantom{\{}
	}
$ is equivalent to $S = \subsetmatrix{ \{2, 3, \dots, \size{R} + 1\} \\ \{2, 3, \dots, \size{R} + 1\} }$, it suffices to consider the latter.
For $S = \subsetmatrix{ \{2, 3\} \\ \{2, 3\} }$, the data suggests $m c_S(m, n) = 2 m + 2 n - m n$.
For $S = \subsetmatrix{ \{2, 3, 4\} \\ \{2, 3, 4\} }$, it suggests $m c_S(m, n) = 3 m + 3 n - m n$.
\end{example}

These formulas lead to the following.

\begin{conjecture}\label{coefficient for size-1 subsets}
Let $m \geq 1$ and $n \geq 1$, and let
$
	S =
	\subsetmatrix{
		R \vphantom{\{} \\
		C \vphantom{\{}
	}
	\subseteq D(m, n)
$.
Then $m c_S(m, n) = \size{R} (m + n) - m n$.
\end{conjecture}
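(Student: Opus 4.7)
The plan is to assume Conjecture~\ref{general polynomial form} and then determine the size-$1$ coefficients by combining the transpose symmetry with anchor values and a perturbative expansion. Dividing the polynomial identity $\sum_{S} c_S(m,n) M(S) x^{\size{S}} = 0$ by the constant term $M(\{\}) = \prod_{(R,C) \in D(m,n)} \Gamma\binom{R}{C}$ reduces the problem to determining the coefficient of $x^1$, namely $\sum_{(R,C) \in D(m,n)} c_{\{(R,C)\}}(m,n)\, \Delta\binom{R}{C}/\Gamma\binom{R}{C}$.

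First, I would invoke the transpose symmetry: the identity $m \Sink(A\tr) = n \Sink(A)\tr$ forces $m^{\size{S}} c_S(m,n) = n^{\size{S}} c_{S\tr}(n,m)$, as already observed in the text. For $\size{S} = 1$, together with equivalence-class invariance, this makes $f(m,n,i) \colonequal m c_{\{(R,C)\}}(m,n)$ a symmetric function of $m$ and $n$ depending only on $i = \size{R}$, reducing the task to showing $f(m,n,i) = i(m+n) - mn$. Two rigorous anchors are already available: the $1 \times n$ case, where $\Sink(A) = (1/n, \ldots, 1/n)$ forces $f(1,n,0) = -n$; and the $3 \times 3$ case, where Theorem~\ref{3 by 3 polynomial} yields $f(3,3,0) = -9$, $f(3,3,1) = -3$, and $f(3,3,2) = 3$, all consistent with the claimed formula.

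To extend to general $(m,n)$, I would use a perturbative approach. Consider $A(\varepsilon) = J + \varepsilon B$, where $J$ is the $m \times n$ all-ones matrix and $B$ is chosen to control which ratios $\Delta\binom{R}{C}/\Gamma\binom{R}{C}$ are nonzero to leading order. Since $\Sink(J) = \tfrac{1}{n} J$ is explicit, an implicit-function expansion of the Sinkhorn equations around $\varepsilon = 0$ produces the top-left entry of $\Sink(A(\varepsilon))$ as a power series in $\varepsilon$. Matching this expansion against $P(x, A(\varepsilon)) = 0$ order by order, with $B$ concentrated on a single submatrix $A_{\{1\} \cup R, \{1\} \cup C}$, should isolate $c_{\{(R,C)\}}(m,n)$ and allow it to be extracted as a closed form linear in $m$, $n$, and $\size{R}$.

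The main obstacle is the decoupling step. Generic $A$ yield identities in which every $c_S$ contributes, while fully degenerate $A$ collapse the polynomial (as noted after Equation~\eqref{3 by 3 example matrix with row multiples}), so one must work at precisely the right intermediate order in $\varepsilon$ and control the cross-contributions from size-$\geq 2$ monomials that survive to the same order. Further, the entire argument is conditional on Conjecture~\ref{general polynomial form}, which is proven only for $m = n = 3$. I expect the hardest step is upgrading the perturbative match into a rigorous identity valid for generic positive $A$ and for all $(m,n)$; this would likely require first establishing the existence and uniqueness of $P$ in the general case, without which one cannot rule out that $m c_S(m,n)$ matches the linear formula only along the test curve $A(\varepsilon)$.
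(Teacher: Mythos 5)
This statement is a \emph{conjecture} in the paper, not a theorem; the paper supplies no proof of it. Its only justification is empirical: the authors computed $c_S(m,n)$ for various small $m,n$ (via Gr\"obner bases and PSLQ), tabulated the values, and interpolated a polynomial formula that fits the data. There is therefore no argument in the paper to compare your proposal against, and you are attempting something the paper explicitly leaves open.

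As a proof strategy, your perturbation around the all-ones matrix $J$ is an interesting idea, but it has a concrete difficulty beyond the gaps you flag. Since
$\Delta\!\left(\!\begin{smallmatrix}\{\}\\\{\}\end{smallmatrix}\!\right) = \Gamma\!\left(\!\begin{smallmatrix}\{\}\\\{\}\end{smallmatrix}\!\right) = a_{11}$
holds identically, for any $S$ not containing $(\{\},\{\})$ one has $M(S) = M(S \cup \{(\{\},\{\})\})$ as polynomials in the $a_{ij}$; they differ only in the power of $x$ they multiply. In your expansion, every ratio $\Delta\!\left(\!\begin{smallmatrix}R\\C\end{smallmatrix}\!\right)/\Gamma\!\left(\!\begin{smallmatrix}R\\C\end{smallmatrix}\!\right)$ with $\size{R}\geq 1$ is $O(\epsilon)$, so the size-$1$ contribution $c_{\{(R,C)\}} M(\{(R,C)\}) x$ and the size-$2$ contribution $c_{\{(\{\},\{\}),(R,C)\}} M(\{(\{\},\{\}),(R,C)\}) x^2$ enter at the \emph{same} $\epsilon$-order with the \emph{same} prefactor $M$, distinguished only by a factor $x\approx 1/n$. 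The $\epsilon^1$ equation you would use to extract $c_{\{(R,C)\}}$ for $\size{R}=1$ therefore also involves $c_{\{(\{\},\{\}),(R,C)\}}$ and the first-order correction $x^{(1)}(B)$ to the top-left entry, and it is not clear that varying $B$ produces enough independent linear relations to separate these unknowns; in fact all size-$1$ subsets with $\size{R}=1$ lie in a single equivalence class, so you get one relation in essentially two unknowns per $\epsilon$-order. Your lowest-order computation does correctly recover $c_{\{(\{\},\{\})\}}(m,n) = -n$ (the $\size{R}=0$ case), and your anchors from the $1\times n$ case and Theorem~\ref{3 by 3 polynomial} are valid, but those are precisely the cases the paper already settles. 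Everything also remains conditional on Conjecture~\ref{general polynomial form}, which is unproven for $\min(m,n)\geq 4$, as you acknowledge.
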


Next we consider subsets $S$ where $\size{S} = 2$.

\begin{example}\label{type 1 link example}
Let $S = \subsetmatrix{ \{\} & \{2\} \\ \{\} & \{2\} }$.
Several values of $m^2 c_S(m, n)$ appear in the following table.
\small
\[
	\begin{array}{r|rrrr}
		& n = 2 & 3 & 4 & 5 \\
		\hline
		m = 2 & 4 & 12 & 24 & 40 \\
		3 & 12 & 36 & 72 & 120 \\
		4 & 24 & 72 & 144 & 240 \\
		5 & 40 & 120 & 240 & 400
	\end{array}
\]
\normalsize
This suggests $m^2 c_S(m, n) = (m - m n) (n - m n)$.
\end{example}

\begin{example}\label{type 2 link example}
Let $S = \subsetmatrix{ \{2\} & \{2\} \\ \{2\} & \{3\} }$.
The two minor specifications in $S$ involve the same row but different columns.
Therefore there is no subset $T$ in the equivalence class of $S$ such that $T\tr = T$, so we do not expect $m^2 c_S(m, n)$ to be a symmetric function of $m$ and $n$.
Here are several of its values (where the bottom-right entry is not included because we have no data for $6 \times 6$ matrices):
\small
\[
	\begin{array}{r|rrrr}
		& n = 3 & 4 & 5 & 6 \\
		\hline
		m = 2 & -3 & 0 & 5 & 12 \\
		3 & 0 & 16 & 40 & 72 \\
		4 & 9 & 48 & 105 & 180 \\
		5 & 24 & 96 & 200 & 336 \\
		6 & 45 & 160 & 325
	\end{array}
\]
\normalsize
This suggests $m^2 c_S(m, n) = (n - m n) (2 m + n - m n)$.
\end{example}

By interpolating polynomial formulas for additional subsets $S$, one conjectures that the scaled coefficient $m^{\size{S}} c_S(m, n)$ is a polynomial function of $m$ and $n$ with degree $\size{S}$ in each variable.
Moreover, there seem to be two basic relationships between minor specifications in $S$ that play a central role in the form of the polynomial function.
These are illustrated by the previous two examples.

\begin{definition*}
We say that two minor specifications $(R_1, C_1), (R_2, C_2) \in D(m, n)$ are \emph{linked} if either of the following conditions holds.
\begin{enumerate}
\item
Their sizes differ by $1$, and the smaller is a subset of the larger.
That is,
\begin{itemize}
\item
$R_1 = R_2 \setminus \{s\}$ and $C_1 = C_2 \setminus \{t\}$ for some $s \in R_2$ and $t \in C_2$, or
\item
$R_2 = R_1 \setminus \{s\}$ and $C_2 = C_1 \setminus \{t\}$ for some $s \in R_1$ and $t \in C_1$.
\end{itemize}
In this case, we say that they form a \emph{type-1 link}.
\item
Their sizes are the same, and they differ in exactly $1$ row index or $1$ column index.
That is,
\begin{itemize}
\item
$R_1 = R_2$ and $C_1 \setminus \{s\} = C_2 \setminus \{t\}$ for some $s \in C_1$ and $t \in C_2$ such that $s \neq t$, or
\item
$C_1 = C_2$ and $R_1 \setminus \{s\} = R_2 \setminus \{t\}$ for some $s \in R_1$ and $t \in R_2$ such that $s \neq t$.
\end{itemize}
In this case, we say that they form a \emph{type-2 link}.
\end{enumerate}
\end{definition*}

The minor specifications $(\{\}, \{\})$ and $(\{2\}, \{2\})$ in Example~\ref{type 1 link example} form a type-1 link, and the minor specifications $(\{2\}, \{2\})$ and $(\{2\}, \{3\})$ in Example~\ref{type 2 link example} form a type-2 link.
Additional size-$2$ subsets $S$ are illustrated in Table~\ref{links}, along with corresponding formulas for $m^2 c_S(m, n)$ that agree with numeric data.

\begin{table}
\[
	\begin{array}{cccc}
		S & \text{diagram} & \text{link type} & m^2 c_S(m, n) \\
		\hline
		\hline
		\subsetmatrix{
			\{2\} & \{2, 3\} \\
			\{2\} & \{2, 3\}
		}
		& \vcenter{\hbox{\includegraphics{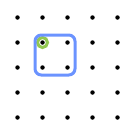}}}
		& 1
		& (m + 2 n - m n) (2 m + n - m n)
		\\
		\subsetmatrix{
			\{2, 3\} & \{2, 3, 4\} \\
			\{2, 3\} & \{2, 3, 4\}
		}
		& \vcenter{\hbox{\includegraphics{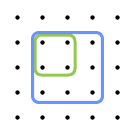}}}
		& 1
		& (2 m + 3 n - m n) (3 m + 2 n - m n)
		\\
		\subsetmatrix{
			\{2, 3, 4\} & \{2, 3, 4, 5\} \\
			\{2, 3, 4\} & \{2, 3, 4, 5\}
		}
		& \vcenter{\hbox{\includegraphics{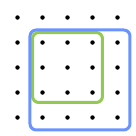}}}
		& 1
		& (3 m + 4 n - m n) (4 m + 3 n - m n)
		\\
		\hline
		\subsetmatrix{
			\{2\} & \{3\} \\
			\{2\} & \{2\}
		}
		& \vcenter{\hbox{\includegraphics{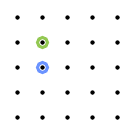}}}
		& 2
		& (m - m n) (m + 2 n - m n)
		\\
		\subsetmatrix{
			\{2, 3\} & \{3, 4\} \\
			\{2, 3\} & \{2, 3\}
		}
		& \vcenter{\hbox{\includegraphics{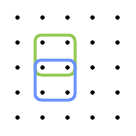}}}
		& 2
		& (2 m + n - m n) (2 m + 3 n - m n)
		\\
		\subsetmatrix{
			\{2, 3, 4\} & \{3, 4, 5\} \\
			\{2, 3, 4\} & \{2, 3, 4\}
		}
		& \vcenter{\hbox{\includegraphics{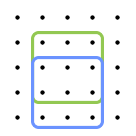}}}
		& 2
		& (3 m + 2 n - m n) (3 m + 4 n - m n)
		\\
		\hline
		\subsetmatrix{
			\{2\} & \{2, 3, 4\} \\
			\{2\} & \{2, 3, 4\}
		}
		& \vcenter{\hbox{\includegraphics{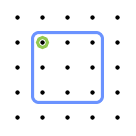}}}
		& \text{not linked}
		& (m + n - m n) (3 m + 3 n - m n)
		\\
		\subsetmatrix{
			\{2, 3\} & \{4, 5\} \\
			\{2, 3\} & \{2, 3\}
		}
		& \vcenter{\hbox{\includegraphics{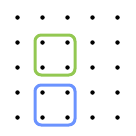}}}
		& \text{not linked}
		& (2 m + 2 n - m n)^2
	\end{array}
\]
\caption{Examples of size-$2$ subsets $S$, with formulas for $m^2 c_S(m, n)$ that agree with numeric data. Each minor specification in these examples consists of a contiguous block of rows and a contiguous block of columns, so we can draw them as squares (drawn here on $5 \times 5$ matrices).}
\label{links}
\end{table}

These examples, along with others, suggest general formulas for size-$2$ subsets.

\begin{conjecture}\label{coefficient for size-2 subsets}
Let $m \geq 1$ and $n \geq 1$, and let
$
	S =
	\subsetmatrix{
		R_1 & R_2\vphantom{\{} \\
		C_1 & C_2\vphantom{\{}
	}
	\subseteq D(m, n)
$.
\begin{itemize}
\item
If $(R_1, C_1)$ and $(R_2, C_2)$ form a type-1 link with $\size{R_1} + 1 = \size{R_2}$, then
\[
	m^2 c_S(m, n)
	= \big(\size{R_1} (m + n) + n - m n\big) \big(\size{R_1} (m + n) + m - m n\big).
\]
\item
If $(R_1, C_1)$ and $(R_2, C_2)$ form a type-2 link with $R_1 = R_2$, then
\[
	m^2 c_S(m, n)
	= \big(\size{R_1} (m + n) - m - m n\big) \big(\size{R_1} (m + n) + m - m n\big).
\]
\item
If $(R_1, C_1)$ and $(R_2, C_2)$ form a type-2 link with $C_1 = C_2$, then
\[
	m^2 c_S(m, n)
	= \big(\size{R_1} (m + n) - n - m n\big) \big(\size{R_1} (m + n) + n - m n\big).
\]
\item
If $(R_1, C_1)$ and $(R_2, C_2)$ are not linked, then
\[
	m^2 c_S(m, n)
	= \big(\size{R_1} (m + n) - m n\big) \big(\size{R_2} (m + n) - m n\big).
\]
\end{itemize}
\end{conjecture}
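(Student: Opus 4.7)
The plan is to derive Conjecture~\ref{coefficient for size-2 subsets} as an instance of the general determinantal formula in Conjecture~\ref{general polynomial}, and then to verify the resulting formulas case by case using polynomial interpolation. Concretely, I would first make the adjacency matrix $\adj_{S,\sigma(S)}(m,n)$ explicit for $\size{S}=2$: each diagonal entry should equal the linear form $\size{R_i}(m+n)-mn$ attached to the minor specification $(R_i,C_i)$, matching the size-1 formula in Conjecture~\ref{coefficient for size-1 subsets}, and the off-diagonal entries should be signed choices among $m$, $n$, and $0$ whose product expansion reproduces the four cases. A quick calculation shows that a type-1 link needs off-diagonal product $-mn$, a type-2 link sharing a row needs $m^2$, a type-2 link sharing a column needs $n^2$, and unlinked pairs need $0$; these precisely recover the four factorizations in the conjecture. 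This reduces the problem to showing that the $2\times 2$ determinant assembled this way does compute $m^{\size{S}}c_S(m,n)$.

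To carry out the verification, I would next establish that $m^{\size{S}}c_S(m,n)$ is a polynomial in $m$ and $n$ of bidegree at most $(\size{S},\size{S})$, so that the identity can be checked on a finite grid. The transposition symmetry $m^{\size{S}}c_S(m,n)=n^{\size{S}}c_{S\tr}(n,m)$ constrains the coefficient, and degeneration arguments of the kind used in Corollary~\ref{3 by 3 matrix with row multiples}, where one specializes a row or column of $A$ to be a scalar multiple of another, should produce vanishing relations among the $c_S(m,n)$ as certain indices drop out of the relevant equivalence classes. Combined with the conjectured polynomial form from Conjecture~\ref{general polynomial form}, these constraints should bound the bidegree. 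Once the bidegree bound is in hand, it suffices to check the four formulas on a $3\times 3$ grid of $(m,n)$ pairs for each of the four link types, and these finitely many values can be obtained rigorously by the Gr\"obner basis computation underlying Theorem~\ref{3 by 3 polynomial} applied to small special cases, rather than by numerical PSLQ.

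The main obstacle is the second step: proving that $m^{\size{S}}c_S(m,n)$ is polynomial in $m$ and $n$, and indeed that the coefficients $c_S(m,n)$ are well-defined in the sense that Conjecture~\ref{general polynomial form} holds for general $m\times n$ matrices. The excerpt observes these features empirically but does not prove them, and without them the strategy degenerates into verifying a finite coincidence rather than establishing a structural identity. I expect that the cleanest path forward is to prove the full determinantal identity in Conjecture~\ref{general polynomial} directly, perhaps via a careful analysis of how scaling the Sinkhorn iterates transforms the minors of $A$, from which Conjecture~\ref{coefficient for size-2 subsets} would follow by inspecting $2\times 2$ determinants. Attacking the $\size{S}=2$ case in isolation, by contrast, leaves the polynomial-ness of $m^{\size{S}}c_S(m,n)$ as the central unresolved difficulty.
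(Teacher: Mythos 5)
This statement is a conjecture, not a theorem: the paper does not prove it, and the only evidence offered is the interpolation data tabulated in the surrounding examples (``This suggests that\dots''). Your algebraic observation---that the four factorizations are exactly the $2\times 2$ determinants with off-diagonal products $-mn$, $m^2$, $n^2$, and $0$---is correct and is precisely what the paper records in Conjecture~\ref{coefficient for size-2 subsets - determinants}. But your proposed logical direction inverts the paper's. In the paper, the size-1 and size-2 formulas are first guessed from numerical data and only afterward rewritten as determinants, and that rewriting is what motivates Conjecture~\ref{general polynomial}. Deriving Conjecture~\ref{coefficient for size-2 subsets} as an instance of Conjecture~\ref{general polynomial} is therefore circular: the general formula is itself unproven and was abstracted from the very size-2 formulas you are trying to establish.

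Your fallback route---bound the bidegree of $m^{\size{S}}c_S(m,n)$ in $(m,n)$ and verify on a finite grid---has two unresolved gaps beyond the one you flag. First, the degenerations in Corollary~\ref{3 by 3 matrix with row multiples} concern a fixed $(m,n)$ with $A$ made singular; they give relations among minors of a single $A$ and say nothing about how $c_S(m,n)$ varies as $m,n$ change, so they cannot bound the bidegree. The transposition symmetry $m^{\size{S}}c_S(m,n)=n^{\size{S}}c_{S\tr}(n,m)$ similarly constrains but does not bound degree, and it presupposes that the $c_S(m,n)$ are well-defined, i.e.\ Conjecture~\ref{general polynomial form}, which is also open. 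Second, the ``rigorous'' finite check via Gr\"obner bases is not available for most $(m,n)$: the paper explicitly aborts the symbolic $4\times4$ computation after a week, and rigorous coefficients are only obtained for $3\times 3$ and $2\times n$. So your grid values would have to come from the same PSLQ-based heuristics the paper uses, which does not upgrade the argument to a proof. In short, you correctly identify polynomial-ness as the central obstruction, but the proposal does not supply any new idea that closes it, and the derivation-from-\ref{general polynomial} step is circular rather than reductive.
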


Conjectures~\ref{coefficient for size-1 subsets} and \ref{coefficient for size-2 subsets} imply that, if $\size{S} \leq 2$, then the polynomial $m^{\size{S}} c_S(m, n)$ is a product of factors that are linear in $m$ and linear in $n$.
When we consider subsets $S$ with $\size{S} \geq 3$, we find additional polynomials with this property.
In particular, when $S$ has no linked pairs, there seems to be a simple description of $m^{\size{S}} c_S(m, n)$ as follows.

\begin{conjecture}\label{disconnected}
Let $m \geq 1$ and $n \geq 1$, and let
$
	S =
	\subsetmatrix{
		R_1 & R_2 & \cdots & R_k\vphantom{\{} \\
		C_1 & C_2 & \cdots & C_k\vphantom{\{}
	}
	\subseteq D(m, n)
$.
If $S$ contains no linked pairs, then
\[
	m^k c_S(m, n)
	= \prod_{i = 1}^k \big(\size{R_i} (m + n) - m n\big).
\]
\end{conjecture}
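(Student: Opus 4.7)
The plan is to proceed by induction on $k = \size{S}$, using the two preceding conjectures as base cases. For $k = 1$, Conjecture~\ref{coefficient for size-1 subsets} gives precisely the claimed formula, and for $k = 2$ with an unlinked pair, the fourth bullet of Conjecture~\ref{coefficient for size-2 subsets} does the same. So the work is in the inductive step $k \geq 3$.

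The most promising route passes through the framework of Conjecture~\ref{general polynomial}, which asserts $c_S(m, n) = \det \adj_{S, \sigma(S)}(m, n)$ for an $\size{S} \times \size{S}$ matrix whose entries are linear in $m$ and $n$. Under this framework, factorization of $c_S(m, n)$ reduces to a structural claim about the adjacency-like matrix. Specifically, I would aim to show (a) that the diagonal entry in row $i$ is $\pm(\size{R_i}(m + n) - m n)$, forced by matching against the size-one formula on the singleton $\{(R_i, C_i)\}$, and (b) that the $(i, j)$ off-diagonal entry vanishes whenever $(R_i, C_i)$ and $(R_j, C_j)$ are not linked, forced by matching against the unlinked size-two formula on $\{(R_i, C_i), (R_j, C_j)\}$. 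Granting a locality principle that each entry of $\adj_{S, \sigma(S)}(m, n)$ depends only on the one or two minor specifications it connects, an $S$ with no linked pairs has a diagonal adjacency matrix, whose determinant is (up to a sign absorbed into $\sigma(S)$) exactly $\prod_{i = 1}^k (\size{R_i}(m + n) - m n)$.

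A more elementary complementary route is direct polynomial interpolation. Assuming Conjecture~\ref{general polynomial form}, the scaled coefficient $m^k c_S(m, n)$ appears in all our computed examples to be a polynomial function of $m$ and $n$. I would first establish a degree bound, namely degree at most $k$ in each of $m$ and $n$ separately, matching the right-hand side, via a homogeneity or scaling argument on the Sinkhorn limit together with the multilinearity of each $M(S)$ in the rows and columns of $A$. The polynomial would then be pinned down by its values on a $(k + 1) \times (k + 1)$ grid of integer pairs $(m, n)$, producible via the PSLQ pipeline of Section~\ref{Rectangular matrices and combinatorial structure}. Agreement on that grid with the conjectured product would upgrade the conjecture to a theorem for each fixed unlinked $S$, and uniformity over $S$ would then follow from the equivalence-class symmetry together with row- and column-permutation invariance.

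The main obstacle in the adjacency-matrix route is the locality claim (b): even when each pairwise restriction of $\adj_{S, \sigma(S)}(m, n)$ has the right structure, one must rule out nontrivial contributions to the $(i, j)$ entry from the other minor specifications in $S$. Since $\adj_{S, \sigma(S)}(m, n)$ is so far defined only by the conjectural pattern, this locality is not yet available as a proven fact and would have to be extracted from the Sinkhorn limit directly, presumably by tracking how disjoint minor specifications interact during the elimination of the variables $r_i, c_j, s_{ij}$ from the polynomial system of Section~\ref{3 by 3}. The interpolation route sidesteps this but replaces it with the need to make the high-degree PSLQ step rigorous, which seems out of reach for the matrix sizes required as $k$ grows.
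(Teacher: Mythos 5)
This statement is a conjecture in the paper, not a theorem: the authors give no proof, only the empirical interpolation of computed coefficients that precedes it in Section~\ref{Rectangular matrices and combinatorial structure}, so there is no proof in the paper to compare against. Your first route --- reading the claim off the adjacency-matrix framework of Conjecture~\ref{general polynomial}, or equivalently off Conjecture~\ref{connected components} together with the size-one formula of Conjecture~\ref{coefficient for size-1 subsets} --- is a correct observation about logical dependence: if $S$ contains no linked pairs then every off-diagonal entry of $\adj_S(m, n)$ is $0$ by definition, so $\adj_{S, \sigma(S)}(m, n)$ is diagonal regardless of $\sigma(S)$, with $i$th diagonal entry $\frac{1}{m}\bigl(\size{R_i}(m + n) - m n\bigr)$ since $\sigma(S)((i,i)) = 1$, and the determinant is exactly the claimed product over $m^k$. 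But you should flag the circularity more sharply than you do: both Conjecture~\ref{connected components} and Conjecture~\ref{general polynomial} appear \emph{after} this conjecture in the paper and are explicitly presented as generalizations of it (the text reads ``This conjecture suggests more generally that the structure of the polynomial \ldots is determined by the connected components''), so deducing the statement from them proves $A$ from a $B$ that was itself reached by generalizing $A$, and adds no evidence.

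Your second route is essentially the authors' actual procedure --- interpolate $m^k c_S(m, n)$ as a polynomial in $(m, n)$ from PSLQ-recognized data --- and, as you correctly note, it cannot be made rigorous: it verifies only finitely many $(m, n)$ and finitely many $S$, and there is no independent proof that $m^k c_S(m, n)$ is a polynomial in $(m, n)$ at all, let alone one of total degree $k$ in each variable, so even the interpolation step rests on conjecture (indeed the whole enterprise rests on the unproven Conjecture~\ref{general polynomial form}, without which $c_S(m, n)$ is not even well-defined). You have identified the gaps accurately and your honesty about them is appropriate, but what you have produced is a research plan conditional on open conjectures, not a proof, and the paper itself makes no stronger claim.
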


Conjecture~\ref{disconnected} suggests more generally that the structure of the polynomial $m^k c_S(m, n)$ is determined by the connected components of linked pairs in $S$ and, moreover, that the contributions of the components are independent of each other.
This is the content of Conjecture~\ref{connected components} below.
We make this precise by defining the following graph.

\begin{notation*}
For each $S \subseteq D(m, n)$, let $G_S$ be the graph whose vertex set is $S$ and whose edges connect pairs of linked vertices.
\end{notation*}

\begin{example}
Let $S = \subsetmatrix{ \{\} & \{2\} & \{2, 3\} & \{2, 4\} \\ \{\} & \{2\} & \{3, 4\} & \{3, 4\} }$.
The first two elements of $S$ form a type-1 link, and the last two form a type-2 link.
These are the only links, so $G_S$ is the graph on $4$ vertices with two non-adjacent edges.
The two connected components are equivalent to the subsets $\subsetmatrix{ \{\} & \{2\} \\ \{\} & \{2\} }$ in Example~\ref{type 1 link example} and $\subsetmatrix{ \{2, 3\} & \{3, 4\} \\ \{2, 3\} & \{2, 3\} }$ in Table~\ref{links}, respectively.
The values we computed of $m^4 c_S(m, n)$ are as follows.
\small
\[
	\begin{array}{r|rrrr}
		& n = 4 & 5 & 6 & 7 \\
		\hline
		m = 4 & -2304 & -5040 & -7200 & -6552 \\
		5 & -2880 & 0 \\
		6 & 0 \\
		7 & 10080
	\end{array}
\]
\normalsize
This is consistent with $m^4 c_S(m, n) = (m - m n) (n - m n) (2 m + n - m n) (2 m + 3 n - m n)$, which is the product of the formulas in Example~\ref{type 1 link example} and Table~\ref{links}.
\end{example}

\begin{conjecture}\label{connected components}
Let $m \geq 1$ and $n \geq 1$.
For every $S \subseteq D(m, n)$,
\[
	m^{\size{S}} c_S(m, n) = \prod_T m^{\size{T}} c_T(m, n)
\]
where the product is over the connected components $T$ of $S$.
Moreover, since $\size{S} = \sum_T \size{T}$, this implies $c_S(m, n) = \prod_T c_T(m, n)$.
\end{conjecture}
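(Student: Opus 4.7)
The plan is to induct on the number of connected components of $G_S$. The base case, in which $G_S$ is connected, is tautological. For the inductive step, decompose $S = S_1 \sqcup S_2$, where $S_1$ is the union of one or more components of $G_S$ and $S_2$ is the union of the remaining components, so that no element of $S_1$ is linked to any element of $S_2$. Since $\size{S} = \size{S_1} + \size{S_2}$, the conjecture reduces to proving $c_S(m,n) = c_{S_1}(m,n)\, c_{S_2}(m,n)$ for any such partition; iterating then yields the full product over connected components.

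The cleanest route would go through Conjecture~\ref{general polynomial}, which predicts that $m^{\size{S}} c_S(m,n) = \det \adj_{S,\sigma(S)}(m,n)$ after normalizing $c_{\{\}}(m,n) = 1$. Since $\adj_{S,\sigma(S)}(m,n)$ is described as an adjacency-type matrix on the vertex set $S$, its $(s,t)$ entry with $s \neq t$ should vanish whenever $s$ and $t$ are not linked. Simultaneously permuting the rows and columns to list all of $S_1$ before all of $S_2$ would then reveal a block-diagonal structure with blocks $\adj_{S_1,\sigma(S_1)}(m,n)$ and $\adj_{S_2,\sigma(S_2)}(m,n)$, provided the sign alteration $\sigma$ respects the decomposition; the determinant factors accordingly. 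The main obstacle with this approach is that the main conjecture is itself open and the sign alteration $\sigma(S)$ has not been identified in closed form, so a proof along this line must simultaneously establish the matrix description of $c_S(m,n)$ and the compatibility of $\sigma$ with disjoint unions of components.

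A more self-contained strategy is to argue directly from the scaling equations and the Gr\"obner basis elimination that produced Theorem~\ref{3 by 3 polynomial - large} and its conjectural generalization. When two minor specifications are unlinked, their row and column index sets are in sufficiently generic position (neither a type-1 nor type-2 link) that the variables involved in one should not interact with those involved in the other during elimination. One would want to formalize this independence and show that it causes the corresponding contribution to the coefficient of $x^{\size{S}}$ in the Sinkhorn polynomial to factor as a product over components. Making this precise is where I expect the primary technical difficulty to lie: it requires tracking the elimination of the variables $s_{ij}, r_i, c_j$ in a way that respects the local combinatorial link structure, and bridging the gap between the pairwise definition of a link and a global algebraic factorization.

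As a weaker but still useful intermediate step, granting Conjecture~\ref{general polynomial form} together with the empirical observation (borne out by Conjectures~\ref{coefficient for size-1 subsets} and~\ref{coefficient for size-2 subsets}) that $m^{\size{S}} c_S(m,n)$ is a polynomial in $m$ and $n$ of degree $\size{S}$ in each variable, the identity for any \emph{fixed} $S$ reduces to a polynomial identity in $(m,n)$ of bounded degree. This can be verified by computing $c_S$, $c_{S_1}$, and $c_{S_2}$ at sufficiently many lattice points using the interpolation pipeline already developed in Section~\ref{Square matrices}. Such a verification yields the conjecture case-by-case but not uniformly in $S$, and closing the uniform gap appears to require one of the two structural routes above.
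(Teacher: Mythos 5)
This statement is a conjecture, not a theorem; the paper offers no proof of it. The authors state explicitly that Conjecture~\ref{connected components} ``is supported by all the rational coefficients we computed'' and use it as a heuristic to reduce the problem to connected $S$, but they never establish it. So there is no ``paper's own proof'' to compare against, and your proposal, which correctly and candidly presents strategies rather than a proof, is already doing the honest thing.

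Your first strategy (block-diagonalize $\adj_{S,\sigma(S)}(m,n)$ after reordering $S$ so that the components are contiguous, then factor the determinant) is essentially the same heuristic the paper gives as motivation just before introducing $\adj_S(m,n)$: ``Since the determinant of a block diagonal matrix is the product of the determinants of the blocks, determinant formulas are good candidates for functions that decompose as products over connected components.'' You correctly identify the two gaps in turning this into a proof: the determinant description of $c_S(m,n)$ is itself only Conjecture~\ref{general polynomial}, and one must check that the sign alteration $\sigma(S)$ can be chosen compatibly with disjoint unions. The second gap is actually benign: unlinked pairs contribute a $0$ off-diagonal entry to $\adj_S(m,n)$ regardless of sign, so any choice of $\sigma(S_1)$ and $\sigma(S_2)$ assembles to a valid $\sigma(S)$ giving the block-diagonal matrix, and switching classes restrict correctly to components. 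The first gap is the real one, and it is circular at present: the paper builds Conjecture~\ref{general polynomial} in part by assuming Conjecture~\ref{connected components}, so you cannot cleanly deduce the latter from the former without untangling that dependence. Your other two routes (tracking variable interactions in the Gr\"obner elimination, or verifying finitely many polynomial identities in $(m,n)$ per fixed $S$) are sensible but, as you say, do not close the uniform gap. In short, no error, but also no proof exists yet, in the paper or in your proposal.
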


Conjecture~\ref{connected components} is supported by all the rational coefficients we computed.
Assuming it is true, it suffices to determine $m^{\size{S}} c_S(m, n)$ for subsets $S$ consisting of a single connected component.

When a connected component consists of more than one linked pair, the corresponding polynomial is not necessarily a product of linear factors.

\begin{example}\label{component example}
Let $S = \subsetmatrix{ \{2\} & \{3\} & \{2, 3\} \\ \{2\} & \{3\} & \{2, 3\} }$.
Here is the diagram for $S$ on a $5 \times 5$ matrix:
\[
	\vcenter{\hbox{\includegraphics{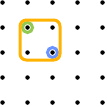}}}
\]
The minor specifications $(\{2\}, \{2\})$ and $(\{3\}, \{3\})$ are not linked, but each of the other two pairs is.
Therefore $G_S$ consists of a single connected component.
Here are several values of $m^3 c_S(m, n)$:
\small
\[
	\begin{array}{r|rrrr}
		& n = 3 & 4 & 5 & 6 \\
		\hline
		m = 3 & -27 & -70 & -161 & -324 \\
		4 & -70 & -256 & -682 & -1456 \\
		5 & -161 & -682 & -1875 & -4028 \\
		6 & -324 & -1456 & -4028
	\end{array}
\]
\normalsize
Without the value for $6 \times 6$ matrices, we cannot interpolate a cubic polynomial in $m$ and $n$.
However, by interpolating cubic polynomials in $n$ for the first three rows, we see that $m + n - m n$ is likely a factor.
Dividing each value in the table by this factor, we then interpolate a quadratic polynomial to obtain $m^3 c_S(m, n) = (m + n - m n) \left(2 m^2 + 2 n^2 + 6 m n - 3 m^2 n - 3 m n^2 + m^2 n^2\right)$.
The quadratic factor is irreducible.
\end{example}

An obvious question is whether there is a better way to write such polynomials, so that we can see the general structure.
In fact there is, using determinants.
Since the determinant of a block diagonal matrix is the product of the determinants of the blocks, determinant formulas are good candidates for functions that decompose as products over connected components.
In this direction, we next rewrite Conjecture~\ref{coefficient for size-2 subsets} using determinant formulas.

\begin{conjecture}\label{coefficient for size-2 subsets - determinants}
Let $m \geq 1$ and $n \geq 1$, and let
$
	S =
	\subsetmatrix{
		R_1 & R_2\vphantom{\{} \\
		C_1 & C_2\vphantom{\{}
	}
	\subseteq D(m, n)
$.
\begin{itemize}
\item
If $(R_1, C_1)$ and $(R_2, C_2)$ form a type-1 link with $\size{R_1} + 1 = \size{R_2}$, then
\[
	m^2 c_S(m, n)
	= \det \begin{bmatrix}
		\size{R_1} (m + n) - m n & m \\
		-n & \size{R_2} (m + n) - m n
	\end{bmatrix}.
\]
\item
If $(R_1, C_1)$ and $(R_2, C_2)$ form a type-2 link with $R_1 = R_2$, then
\[
	m^2 c_S(m, n)
	= \det \begin{bmatrix}
		\size{R_1} (m + n) - m n & m \\
		m & \size{R_2} (m + n) - m n
	\end{bmatrix}.
\]
\item
If $(R_1, C_1)$ and $(R_2, C_2)$ form a type-2 link with $C_1 = C_2$, then
\[
	m^2 c_S(m, n)
	= \det \begin{bmatrix}
		\size{R_1} (m + n) - m n & n \\
		n & \size{R_2} (m + n) - m n
	\end{bmatrix}.
\]
\item
If $(R_1, C_1)$ and $(R_2, C_2)$ are not linked, then
\[
	m^2 c_S(m, n)
	= \det \begin{bmatrix}
		\size{R_1} (m + n) - m n & 0 \\
		0 & \size{R_2} (m + n) - m n
	\end{bmatrix}.
\]
\end{itemize}
\end{conjecture}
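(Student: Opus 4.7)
The proposed conjecture is really a reformulation of Conjecture~\ref{coefficient for size-2 subsets}, so my plan is to deduce it directly from that conjecture by expanding each $2 \times 2$ determinant and matching the result with the corresponding factored expression. Set $a = \size{R_1}(m + n) - m n$ and $b = \size{R_2}(m + n) - m n$, so that $a$ and $b$ are the diagonal entries in each of the four candidate matrices; only the off-diagonal pattern changes between cases.

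For the type-1 case, the hypothesis $\size{R_2} = \size{R_1} + 1$ gives $b = a + (m + n)$, and the determinant evaluates to
\[
	a(a + m + n) - (m)(-n) = a^2 + a(m + n) + m n = (a + m)(a + n),
\]
which matches $(\size{R_1}(m + n) + m - m n)(\size{R_1}(m + n) + n - m n)$. In the row-same type-2 case we have $\size{R_1} = \size{R_2}$ and hence $a = b$, so the determinant collapses to $a^2 - m^2 = (a - m)(a + m)$, again matching Conjecture~\ref{coefficient for size-2 subsets}; the column-same case is identical with $n$ in place of $m$. The unlinked case is immediate since the matrix is diagonal. Writing out these four short computations is the whole proof of the equivalence. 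The only substantive observation is that the signs chosen for the off-diagonal entries ($+m, -n$ in type~1; $-m, -m$ and $-n, -n$ in type~2) are precisely what is needed to turn $a b$ or $a(a + m + n)$ into the desired products of shifted linear factors, and one should pause on this: a different sign choice would produce the same determinant only up to a swap that the asymmetry of the type-1 matrix rules out, so the determinantal form actually encodes which of the two minor specifications has the larger size.

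The main obstacle is not the algebraic verification but the fact that Conjecture~\ref{coefficient for size-2 subsets} itself remains conjectural, having been inferred by interpolating computed values of $c_S(m, n)$. An unconditional proof of the present determinantal reformulation would require first establishing the size-$2$ formulas, presumably by analyzing the elimination procedure of Section~\ref{3 by 3} in the rectangular setting and showing that each linked pair of minor specifications contributes a $2 \times 2$ block to an $\size{S} \times \size{S}$ matrix whose determinant equals $m^{\size{S}} c_S(m, n)$. This is exactly the structure anticipated by $\adj_{S, \sigma(S)}(m, n)$ in Conjecture~\ref{general polynomial}, and I would pursue it by studying either a resultant presentation of $c_S(m, n)$ or the Jacobian of the Sinkhorn scaling map. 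I expect this structural identification --- rather than the bookkeeping that converts factorizations into determinants --- to be the genuinely hard step.
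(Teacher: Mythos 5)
Your expansion of the four $2 \times 2$ determinants is correct and matches exactly what the paper is asserting when it restates Conjecture~\ref{coefficient for size-2 subsets} as Conjecture~\ref{coefficient for size-2 subsets - determinants}: setting $a = \size{R_1}(m+n) - mn$ and $b = \size{R_2}(m+n) - mn$, the type-1 determinant is $ab + mn = (a+m)(a+n)$ since $b = a + (m+n)$, the two type-2 determinants are $a^2 - m^2$ and $a^2 - n^2$ since $a = b$, and the unlinked case is $ab$. The paper itself offers no proof --- the statement is a conjectural reformulation of an interpolated formula --- so the only thing there is to verify is this algebraic equivalence, which you do, and you are also appropriately explicit that this reduces the determinantal form to Conjecture~\ref{coefficient for size-2 subsets} rather than establishing either unconditionally. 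In short, you take the same approach the paper implicitly takes.

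One small correction to a side remark: you write that the asymmetric off-diagonal signs $(+m, -n)$ in the type-1 case are forced because a different sign choice would change the determinant "up to a swap." In fact any of the four sign patterns with off-diagonal product $-mn$ (namely $(+m,-n)$, $(-m,+n)$, $(+n,-m)$, $(-n,+m)$) yields the identical determinant $ab + mn$, so at the level of a single $2 \times 2$ block the sign choice is not uniquely determined by the value. What actually pins down the paper's convention is the intended role of these entries as off-diagonal blocks in the larger matrix $\adj_S(m,n)$ of Conjecture~\ref{general polynomial}, where the signs of different entries interact with each other (and are then further subject to the unresolved sign alteration $\sigma(S)$, which itself only alters signs up to the switching equivalence described in Section~\ref{Open questions}). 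Your final paragraph correctly identifies the genuinely hard problem --- explaining why the determinant of such a block matrix computes $m^{\size{S}} c_S(m,n)$ at all --- and that remains open.
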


For size-$1$ subsets
$
	S =
	\subsetmatrix{
		R \vphantom{\{} \\
		C \vphantom{\{}
	}
	\subseteq D(m, n)
$, we can rewrite Conjecture~\ref{coefficient for size-1 subsets} as the $1 \times 1$ determinant $m c_S(m, n) = \det \begin{bmatrix} \size{R} (m + n) - m n \end{bmatrix}$.
For the size-$0$ subset $S = \{\}$, the definition $c_{\{\}}(m, n) = 1$ is consistent with $c_{\{\}}(m, n)$ being the determinant of the $0 \times 0$ matrix.

Along with Conjecture~\ref{connected components}, this suggests a possible determinant formula for $m^{\size{S}} c_S(m, n)$ for an arbitrary subset $S$.
Since the matrix in this determinant formula resembles an adjacency matrix, we introduce the following notation.

\begin{notation*}
For each
$
	S =
	\subsetmatrix{
		R_1 & R_2 & \cdots & R_k\vphantom{\{} \\
		C_1 & C_2 & \cdots & C_k\vphantom{\{}
	}
$, define $\adj'_S(m, n)$ to be the $k \times k$ matrix with the property that, for all $i, j$ satisfying $1 \leq i < j \leq k$, the $2 \times 2$ submatrix $(\adj'_S(m, n))_{\{i, j\}, \{i, j\}}$ is the matrix in Conjecture~\ref{coefficient for size-2 subsets - determinants} for the subset
$
	\subsetmatrix{
		R_i & R_j\vphantom{\{} \\
		C_i & C_j\vphantom{\{}
	}
$, with the following additional condition.
We are building a matrix from a set $S$, and the determinant of this matrix should not depend on the order in which we write the elements of $S$.
Three of the cases in Conjecture~\ref{coefficient for size-2 subsets - determinants} are symmetric in $(R_i, C_i)$ and $(R_j, C_j)$, but the first case is not.
Therefore, if $(R_i, C_i)$ and $(R_j, C_j)$ form a type-1 link with $\size{R_i} = \size{R_j} + 1$, we permute the rows and columns relative to the first case in Conjecture~\ref{coefficient for size-2 subsets - determinants}, so that $(\adj'_S(m, n))_{\{i, j\}, \{i, j\}}$ is
\[
	\begin{bmatrix}
		\size{R_i} (m + n) - m n & -n \\
		m & \size{R_j} (m + n) - m n
	\end{bmatrix}.
\]
\end{notation*}

\begin{example}
As in Example~\ref{component example}, let $S = \subsetmatrix{ \{2\} & \{3\} & \{2, 3\} \\ \{2\} & \{3\} & \{2, 3\} }$.
We have
\[
	\adj'_S(m, n)
	= \begin{bmatrix}
		m + n - m n & 0 & m \\
		0 & m + n - m n & m \\
		-n & -n & 2 m + 2 n - m n
	\end{bmatrix}.
\]
Indeed, $\det \adj'_S(m, n)$ is equivalent to the formula in Example~\ref{component example} for $m^3 c_S(m, n)$.
\end{example}

In defining $\adj'_S(m, n)$, we inadvertently made choices about the signs of the off-diagonal entries.
Namely, negating the off-diagonal entries of the $2 \times 2$ matrices in Conjecture~\ref{coefficient for size-2 subsets - determinants} doesn't change their determinants, but of course negating a pair of off-diagonal entries in a larger matrix can change its determinant.
Therefore there is no reason to expect $\det \adj'_S(m, n)$ to give the intended coefficient for all subsets $S$, and in fact it does not.

\begin{example}
Let $S = \subsetmatrix{ \{2\} & \{2\} & \{2, 3\} \\ \{2\} & \{3\} & \{2, 3\} }$.
The minor specifications $(\{2\}, \{2\})$ and $(\{2\}, \{3\})$ form a type-2 link, and each forms a type-1 link with $(\{2, 3\}, \{2, 3\})$.
We have
\[
	\adj'_S(m, n)
	= \begin{bmatrix}
		m + n - m n & m & m \\
		m & m + n - m n & m \\
		-n & -n & 2 m + 2 n - m n
	\end{bmatrix}.
\]
The formula $\det \adj'_S(m, n)$ does not produce the following values we computed for $m^3 c_S(m, n)$.
\small
\[
	\begin{array}{r|rrrr}
		& n = 3 & 4 & 5 & 6 \\
		\hline
		m = 3 & 0 & -16 & -80 & -216 \\
		4 & -6 & -128 & -490 & -1200 \\
		5 & -36 & -432 & -1500 & -3528 \\
		6 & -108 & -1024 & -3380
	\end{array}
\]
\normalsize
However, if we alter the signs of the off-diagonal entries, we can get a determinant formula that produces these values, namely
\[
	m^3 c_S(m, n)
	= \det \begin{bmatrix}
		m + n - m n & m & m \\
		m & m + n - m n & -m \\
		-n & n & 2 m + 2 n - m n
	\end{bmatrix}.
\]
\end{example}

Further examples suggest that the signs of the off-diagonal entries in $\adj'_S(m, n)$ can always be altered in such a way that its determinant gives the value of $m^{\size{S}} c_S(m, n)$.
However, the graph $G_S$ does not determine the signs, as the following example shows.

\begin{example}\label{5-cycle}
For the subset $S = \subsetmatrix{ \{2, 3, 4\} & \{2, 3, 4\} & \{2, 3, 4\} & \{2, 3, 4\} & \{2, 3, 4\} \\ \{2, 3, 4\} & \{2, 3, 5\} & \{2, 5, 7\} & \{2, 6, 7\} & \{2, 4, 6\} }$, the graph $G_S$ is a $5$-cycle.
The determinant of
\small
\[
	\begin{bmatrix}
		3 m + 3 n - m n & m & 0 & 0 & -m \\
		m & 3 m + 3 n - m n & -m & 0 & 0 \\
		0 & -m & 3 m + 3 n - m n & m & 0 \\
		0 & 0 & m & 3 m + 3 n - m n & -m \\
		-m & 0 & 0 & -m & 3 m + 3 n - m n
	\end{bmatrix}
\]
\normalsize
gives the values of $m^5 c_S(m, n)$.
For the subset $T = \subsetmatrix{ \{2, 3, 4\} & \{2, 3, 4\} & \{2, 3, 4\} & \{2, 3, 4\} & \{2, 3, 4\} \\ \{2, 3, 4\} & \{2, 3, 5\} & \{3, 5, 6\} & \{4, 5, 6\} & \{2, 4, 6\} }$, the graph $G_T$ is also a $5$-cycle, but we need to alter one pair of signs; the determinant of
\small
\[
	\begin{bmatrix}
		3 m + 3 n - m n & m & 0 & 0 & -m \\
		m & 3 m + 3 n - m n & +m & 0 & 0 \\
		0 & +m & 3 m + 3 n - m n & m & 0 \\
		0 & 0 & m & 3 m + 3 n - m n & -m \\
		-m & 0 & 0 & -m & 3 m + 3 n - m n
	\end{bmatrix}
\]
\normalsize
gives the values of $m^5 c_T(m, n)$.
\end{example}

To determine the signs of the off-diagonal entries in general, we used ChatGPT o3-mini-high~\cite{ChatGPT}.
We presented the entire framework of the question, including our conjectures about the determinant formula for the coefficients, along with the previous example of two subsets with the same graph that require two different sign assignments.\footnote{For the conversation, see \url{https://chatgpt.com/share/67aba51d-c0f0-800d-a9cb-d3533aadc17c}.}
ChatGPT's responses were self-contradictory (it claimed that the product around cycles must be positive) and not entirely prescriptive, but it suggested that, for a linked pair of minor specifications $(R_i, C_i)$ and $(R_j, C_j)$, the relative positions of the row and column indices are relevant.
Namely, the signs of the corresponding off-diagonal entries should reflect the permutations mapping $R_i$ to $R_j$ and $C_i$ to $C_j$ as ordered lists.
We hadn’t considered this ourselves since the row and column orders play no role up to this point.
This was enough direction for us to find rules for the signs that agreed with all numeric data we had computed.
For a type-1 link, the index sets have different sizes, so to obtain a permutation we append the missing element to the smaller list.
For a type-2 link, the index sets have the same sizes but can differ in two elements, so to obtain a permutation we require that those two elements get mapped to each other.

\begin{example}
The subsets $S$ and $T$ in Example~\ref{5-cycle} each consist of $5$ type-2 links.
For $S$, we have $C_1 = \{2, 3, 4\}$ to $C_2 = \{2, 3, 5\}$.
The element $4 \in C_1$ plays the role of $5 \in C_2$, so we consider the permutation mapping $(2, 3, s)$ to $(2, 3, s)$.
This is the identity permutation, which has sign $1$, and indeed the $(1, 2)$ and $(2, 1)$ entries in the first matrix in Example~\ref{5-cycle} have sign $1$.
Analogously, the signs in the $5$-cycle
\[
	\{2, 3, 4\} \mapsto
	\{2, 3, 5\} \mapsto
	\{2, 5, 7\} \mapsto
	\{2, 6, 7\} \mapsto
	\{2, 4, 6\} \mapsto
	\{2, 3, 4\}
\]
are $1, -1, 1, -1, -1$.
These agree with the signs of the remaining off-diagonal entries.
On the other hand, for the subset $T$, the signs in the $5$-cycle
\[
	\{2, 3, 4\} \mapsto
	\{2, 3, 5\} \mapsto
	\{3, 5, 6\} \mapsto
	\{4, 5, 6\} \mapsto
	\{2, 4, 6\} \mapsto
	\{2, 3, 4\}
\]
are $1, 1, 1, -1, -1$.
These agree with the signs in the second matrix in Example~\ref{5-cycle}.
\end{example}

The sign of each permutation can be encoded more simply by the position where the differing index $s$ appears in each ordered list.
This formulation gives the definition of $\adj_S(m, n)$ in Section~\ref{Introduction} and results in Conjecture~\ref{general polynomial}.

We conclude this section by specializing Conjecture~\ref{general polynomial} to $2 \times n$ matrices.
For each exponent $k$, there are at most $2$ equivalence classes of subsets --- an equivalence class containing subsets $S$ such that $(\{\}, \{\}) \in S$ and another containing the rest.
Evaluating the two determinant formulas gives the following.

\begin{conjecture}
Let $n \geq 1$.
Let $S_k = \subsetmatrix{ \{\} & \{2\} & \{2\} & \cdots & \{2\} \\ \{\} & \{2\} & \{3\} & \cdots & \{k\} }$ for each $k \in \{1, \dots, n - 1, n\}$, let $T_k = \subsetmatrix{ \{2\} & \{2\} & \{2\} & \cdots & \{2\} \\ \{2\} & \{3\} & \{4\} & \cdots & \{k + 1\} }$ for each $k \in \{0, 1, \dots, n - 1\}$, and define the associated coefficients by
\begin{align*}
	2^k c_{S_k}(2, n) &=
	\begin{cases}
		(-n)^{k - 1} (2 k - 2 n - 2)	& \text{if $1 \leq k \leq n$} \\
		0					& \text{if $k = 0$}
	\end{cases} \\
	2^k c_{T_k}(2, n) &=
	\begin{cases}
		(-n)^{k - 1} (2 k - n)	& \text{if $0 \leq k \leq n - 1$} \\
		0				& \text{if $k = n$.}
	\end{cases}
\end{align*}
(We have defined $c_{S_0}(2, n) = 0$ and $c_{T_n}(2, n) = 0$ despite $S_0$ and $T_n$ being undefined; this allows us to write the following sum simply.)
For every positive $2 \times n$ matrix~$A$, the top-left entry $x$ of $\Sink(A)$ satisfies
\[
	\sum_{k = 0}^n \big(
		c_{S_k}(2, n) \Sigma(S_k)
		+
		c_{T_k}(2, n) \Sigma(T_k)
	\big) x^k
	= 0.
\]
\end{conjecture}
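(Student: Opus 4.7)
The plan is to derive a polynomial equation for $x$ directly from the $2 \times n$ Sinkhorn structure and match it against the conjectured form. Writing $\Sink(A) = RAC$ with $R = \mathrm{diag}(r_1, r_2)$, the column-sum condition $s_{1j} + s_{2j} = 2/n$ forces $s_{1j} = \tfrac{2}{n} \cdot \tfrac{\lambda \alpha_j}{1 + \lambda \alpha_j}$, where $\alpha_j = a_{1j}/a_{2j}$ and $\lambda = r_1/r_2$. Normalizing $u = \lambda \alpha_1$ and $\beta_j = \alpha_j/\alpha_1 = \tfrac{a_{1j} a_{21}}{a_{11} a_{2j}}$ (so $\beta_1 = 1$), the row-sum condition becomes $\sum_{j=1}^n (1 + u\beta_j)^{-1} = n/2$, while $x = s_{11} = \tfrac{2u}{n(1+u)}$; equivalently $w := u/(1+u) = nx/2$.

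Next I would rephrase the constraint using $P(u) = \prod_{j=1}^n (1 + u \beta_j)$. The logarithmic-derivative identity $\sum_j (1+u\beta_j)^{-1} = n - uP'(u)/P(u)$ reduces the Sinkhorn equation to $nP(u) = 2uP'(u)$. Factoring $P(u) = (1+u)^n R(w)$ with $R(w) = \prod_{j=2}^n (1 - w z_j)$ and $z_j := 1 - \beta_j$, a direct chain-rule calculation transforms this into
\[
n(1 - 2w)\,R(w) = 2w(1-w)\,R'(w),
\]
evaluated at $w = nx/2$. Expanding $R(w) = \sum_k (-1)^k e_k(z_2, \dots, z_n)\, w^k$, collecting powers of $w$, and substituting $w = nx/2$ yields the polynomial equation
\[
\sum_{k=0}^n \left(-\tfrac{n}{2}\right)^{\!k} \bigl[(n-2k)\,e_k(z) + 2(n-k+1)\,e_{k-1}(z)\bigr]\,x^k = 0.
\]

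The final step is to match this against $Q(x) := \sum_k \bigl(c_{S_k}(2,n)\,\Sigma(S_k) + c_{T_k}(2,n)\,\Sigma(T_k)\bigr) x^k$. The ratio $1 - \beta_j = (a_{11}a_{2j} - a_{1j}a_{21})/(a_{11}a_{2j})$ equals $\Delta\!\left(\!\begin{smallmatrix}\{2\}\\ \{j\}\end{smallmatrix}\!\right) / \Gamma\!\left(\!\begin{smallmatrix}\{2\}\\ \{j\}\end{smallmatrix}\!\right)$, and since the equivalence class of $T_k$ under column permutations fixing column~$1$ ranges over all size-$k$ subsets of $\{2, \dots, n\}$, one immediately obtains
\[
\Sigma(T_k) = a_{11}\left(\prod_{j \geq 2} \Gamma_j\right) e_k(z), \qquad \Sigma(S_k) = a_{11}\left(\prod_{j \geq 2} \Gamma_j\right) e_{k-1}(z),
\]
where $\Gamma_j$ abbreviates $\Gamma\!\left(\!\begin{smallmatrix}\{2\}\\ \{j\}\end{smallmatrix}\!\right)$. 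A routine ratio check then shows the derived polynomial equals $\tfrac{n}{a_{11}\prod_{j \geq 2}\Gamma_j}$ times $Q(x)$; since this prefactor is positive, we conclude $Q(x) = 0$ with exactly the stated closed forms for $c_{S_k}(2,n)$ and $c_{T_k}(2,n)$. No step poses a serious obstacle; the one pleasant simplification that makes the $2 \times n$ coefficients so clean is that the substitution $w = u/(1+u)$ reduces to $w = nx/2$ without any residual denominators.
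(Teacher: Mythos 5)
The paper does not actually prove this conjecture; it only reports that Gr\"obner basis computations verify it for $n \le 12$, so your argument would upgrade the statement from a conjecture to a theorem. The chain of reductions is correct: the column- and row-sum constraints collapse to the single scalar equation $nP(u) = 2uP'(u)$ for $P(u) = \prod_{j=1}^n(1+u\beta_j)$; the M\"obius substitution $w = u/(1+u) = nx/2$ (using $\beta_1 = 1$) converts this into $n(1-2w)R(w) = 2w(1-w)R'(w)$ with $R(w) = \prod_{j\ge 2}(1 - wz_j)$; and the rest is bookkeeping. I checked the coefficient match: with $r_k = (-1)^k e_k(z)$, one has $n(1-2w)R(w) - 2w(1-w)R'(w) = \sum_k \bigl[(n-2k)r_k - (2n-2k+2)r_{k-1}\bigr]w^k$, while $\Sigma(T_k) = a_{11}\bigl(\prod_{j\ge 2}\Gamma_j\bigr)e_k(z)$ and $\Sigma(S_k) = a_{11}\bigl(\prod_{j\ge 2}\Gamma_j\bigr)e_{k-1}(z)$, the latter relying on the coincidence $\Delta\!\left(\!\begin{smallmatrix}\{\}\\ \{\}\end{smallmatrix}\!\right) = \Gamma\!\left(\!\begin{smallmatrix}\{\}\\ \{\}\end{smallmatrix}\!\right) = a_{11}$ so that prepending $(\{\},\{\})$ to a subset changes nothing in $M$. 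Substituting $w = nx/2$ and clearing the positive prefactor $n/\bigl(a_{11}\prod_{j\ge 2}\Gamma_j\bigr)$ yields precisely the stated $2^k c_{S_k}(2,n)$ and $2^k c_{T_k}(2,n)$, including the boundary conventions $c_{S_0} = 0$ (via $e_{-1}=0$) and $c_{T_n} = 0$ (via $e_n(z_2,\dots,z_n) = 0$). This is a clean and complete argument and should be presented as a proved theorem rather than a conjecture.
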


Gr\"obner basis computations are feasible for $2 \times n$ matrices and establish that the previous conjecture is true for $n \leq 12$.

\section{Open questions}\label{Open questions}

We mention several open questions.
The main question is how to prove Conjecture~\ref{general polynomial}.
Until we understand why the two types of links arise, a simple proof seems unlikely.
Alternatively, toward an algebraic proof, one could interpolate polynomial equations satisfied by the entries of diagonal matrices $R$ and $C$, as we did for the entries of $\Sink(A)$, and hope to prove that all three matrices are correct by checking that they satisfy $\Sink(A) = R A C$.
This is how Nathanson established the Sinkhorn limit of a $2 \times 2$ matrix~\cite[Theorem~3]{Nathanson 2x2}.
However, this works for $2 \times 2$ matrices because we have explicit expressions for the entries of $\Sink(A)$, rather than specifications as roots.
For $3 \times 3$ matrices, the diagonal entries of $R$ and $C$ are roots of degree-$6$ polynomials, and generically the product of two such roots has degree $6^2$;
this is too big, since the entries of $\Sink(A)$ have degree $6$.
Therefore we would need to find a single degree-$6$ field extension that contains all entries of $\Sink(A)$, $R$, and $C$ so that we can perform arithmetic on them symbolically.
What is this extension?

Second, as we saw in Section~\ref{3 by 3}, the lack of a unique representation of the coefficient of $x^3$ for $3 \times 3$ matrices is due to a relation among $12$ monomials $M(S)$ with $\size{S} = 3$.
What is the combinatorial structure of such relations?
Similarly, is there structure in relations among $\Sigma(S)$?
For example, the coefficient of $x^5$ for $3 \times 4$ matrices has a $1$-dimensional family of representations due to the relation
\small
\begin{multline*}
	\Sigma\!\left(\!\subsetmatrix{
		\{\} & \{2\} & \{2\} & \{3\} & \{2, 3\} \\
		\{\} & \{2\} & \{3\} & \{4\} & \{2, 4\}
	}\!\right)
	+ \Sigma\!\left(\!\subsetmatrix{
		\{\} & \{2\} & \{2\} & \{2, 3\} & \{2, 3\} \\
		\{\} & \{2\} & \{3\} & \{2, 3\} & \{2, 4\}
	}\!\right)
	+ \Sigma\!\left(\!\subsetmatrix{
		\{2\} & \{2\} & \{3\} & \{3\} & \{2, 3\} \\
		\{2\} & \{3\} & \{2\} & \{4\} & \{2, 3\}
	}\!\right) \\
	+ 2 \Sigma\!\left(\!\subsetmatrix{
		\{\} & \{2\} & \{3\} & \{2, 3\} & \{2, 3\} \\
		\{\} & \{2\} & \{3\} & \{2, 4\} & \{3, 4\}
	}\!\right)
	+ 2 \Sigma\!\left(\!\subsetmatrix{
		\{2\} & \{2\} & \{2\} & \{3\} & \{2, 3\} \\
		\{2\} & \{3\} & \{4\} & \{2\} & \{3, 4\}
	}\!\right)
	+ 2 \Sigma\!\left(\!\subsetmatrix{
		\{2\} & \{2\} & \{3\} & \{2, 3\} & \{2, 3\} \\
		\{2\} & \{3\} & \{4\} & \{2, 4\} & \{3, 4\}
	}\!\right) \\
	=
	\Sigma\!\left(\!\subsetmatrix{
		\{\} & \{2\} & \{3\} & \{2, 3\} & \{2, 3\} \\
		\{\} & \{2\} & \{3\} & \{2, 3\} & \{2, 4\}
	}\!\right)
	+ \Sigma\!\left(\!\subsetmatrix{
		\{2\} & \{2\} & \{2\} & \{3\} & \{2, 3\} \\
		\{2\} & \{3\} & \{4\} & \{2\} & \{2, 3\}
	}\!\right)
	+ \Sigma\!\left(\!\subsetmatrix{
		\{2\} & \{2\} & \{3\} & \{2, 3\} & \{2, 3\} \\
		\{2\} & \{3\} & \{4\} & \{2, 3\} & \{2, 4\}
	}\!\right) \\
	+ 2 \Sigma\!\left(\!\subsetmatrix{
		\{\} & \{2\} & \{2\} & \{3\} & \{2, 3\} \\
		\{\} & \{2\} & \{3\} & \{4\} & \{2, 3\}
	}\!\right)
	+ 2 \Sigma\!\left(\!\subsetmatrix{
		\{\} & \{2\} & \{2\} & \{2, 3\} & \{2, 3\} \\
		\{\} & \{2\} & \{3\} & \{2, 4\} & \{3, 4\}
	}\!\right)
	+ 2 \Sigma\!\left(\!\subsetmatrix{
		\{2\} & \{2\} & \{3\} & \{3\} & \{2, 3\} \\
		\{2\} & \{3\} & \{2\} & \{4\} & \{3, 4\}
	}\!\right).
\end{multline*}
\normalsize
It would be interesting to understand these better.

Third, how does Corollary~\ref{3 by 3 matrix with row multiples} generalize to $m \times n$ matrices with linear dependencies among their rows or columns?
The coefficients in Corollary~\ref{3 by 3 matrix with row multiples} have unique representations as linear combinations of class sums $\Sigma(S)$ where $S \subseteq D(2, 3)$, namely
\begin{align*}
	e_3 &=
		\Sigma\!\left(\!\subsetmatrix{
			\{\} & \{2\} & \{2\} \\
			\{\} & \{2\} & \{3\}
		}\!\right) \\
	e_2 &=
		-2 \Sigma\!\left(\!\subsetmatrix{
			\{\} & \{2\} \\
			\{\} & \{2\}
		}\!\right)
		+ \Sigma\!\left(\!\subsetmatrix{
			\{2\} & \{2\} \\
			\{2\} & \{3\}
		}\!\right) \\
	e_1 &=
		3 \Sigma\!\left(\!\subsetmatrix{
			\{\} \\
			\{\}
		}\!\right) \\
	e_0 &=
		-\Sigma\!\left(\!\subsetmatrix{
			\vphantom{\{} \\
			\vphantom{\{}
		}\!\right).
\end{align*}
What is the general formula?

Fourth, why does Conjecture~\ref{general polynomial} imply Conjectures~\ref{symmetry} and \ref{binomial sum}?

Finally, there is a generalization of Conjecture~\ref{general polynomial} for which the determinant formula is not known.
Decades before Sinkhorn's paper, the iterative scaling process was introduced by Kruithof~\cite{Kruithof} in the context of predicting telephone traffic.
In this application, rather than scaling to obtain row and column sums of $1$, each row and column has a potentially different target sum.
Sinkhorn~\cite{Sinkhorn 1967} showed that the limit exists.
We call this limit the \emph{Kruithof limit}.

\begin{example}
Kruithof~\cite[Appendix~3d]{Kruithof} considered the matrix
\[
	A =
	\begin{bmatrix}
		2000 & 1030 & 650 & 320 \\
		1080 & 1110 & 555 & 255 \\
		720 & 580 & 500 & 200 \\
		350 & 280 & 210 & 160
	\end{bmatrix}
\]
with target row sums $V = \begin{bmatrix} 6000 & 4000 & 2500 & 1000 \end{bmatrix}\tr$ and target column sums $W = \begin{bmatrix} 6225 & 4000 & 2340 & 935 \end{bmatrix}$.
Let $x$ be the top-left entry of the Kruithof limit.
Numerically, $x \approx 3246.38700234$.
A Gr\"obner basis computation gives the equation
\small
\begin{align*}
	62&11170485642866385308015185014605806684592592997303612 x^{20} \\
	&- 1911288675240357642608985257264441863326549355081446688219995 x^{19} \\
	&+ \cdots \\
	&- 980316295756763597938629190043558577216660563425441394040234375 \cdot 10^{72} x \\
	&+ 60077293526471262201893650291744622440239260152893558984375 \cdot 10^{79}
	= 0
\end{align*}
\normalsize
satisfied by $x$.
In particular, $x$ has degree $20$.
\end{example}

The previous example suggests that the degrees of entries of Kruithof limits are the same as those for Sinkhorn limits with the same dimensions.

\begin{conjecture}\label{Kruithof limit degree}
Let $m \geq 1$ and $n \geq 1$.
Let $A$ be a positive $m \times n$ matrix, let $V$ be a positive $m \times 1$ matrix, and let $W$ be a positive $1 \times n$ matrix such that the sum of the entries of $V$ equals the sum of the entries of $W$.
The top-left entry $x$ of the Kruithof limit of $A$ with target row sums $V$ and target column sums $W$ is algebraic over the field generated by the entries of $A$, $V$, and $W$, with degree at most $\binom{m + n - 2}{m - 1}$.
\end{conjecture}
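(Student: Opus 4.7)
The plan is to reformulate the problem as intersecting a Segre variety with linear hyperplanes, so that the classical degree formula for the Segre embedding gives the bound directly, simultaneously for Sinkhorn and Kruithof limits.

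First I would observe that the equation $K = RAC$ for positive diagonal $R = \mathrm{diag}(r_i)$ and $C = \mathrm{diag}(c_j)$ is equivalent to saying that the Hadamard-quotient matrix $M = (K_{ij}/a_{ij})_{i,j}$ has rank at most one, namely $M_{ij} = r_i c_j$. Thus the Kruithof limit corresponds to a rank-$1$ matrix $M$ lying on the Segre variety $\Sigma_{m,n} \cong \mathbb{P}^{m-1} \times \mathbb{P}^{n-1} \hookrightarrow \mathbb{P}^{mn-1}$, the classical variety cut out by the $2 \times 2$ minors of the matrix of indeterminates $(M_{ij})$. It is standard that $\dim \Sigma_{m,n} = m + n - 2$ and $\deg \Sigma_{m,n} = \binom{m + n - 2}{m - 1}$.

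Next I would translate the row- and column-sum constraints into linear conditions on the $M_{ij}$: $\sum_j K_{ij} = V_i$ becomes $\sum_j a_{ij} M_{ij} = V_i$, and $\sum_i K_{ij} = W_j$ becomes $\sum_i a_{ij} M_{ij} = W_j$. The compatibility $\sum_i V_i = \sum_j W_j$ makes exactly one of these $m + n$ equations redundant, leaving $m + n - 1$ independent affine-linear constraints on $M$. The third step is Bezout. Homogenizing and intersecting $\Sigma_{m,n}$ with the $m + n - 1$ hyperplanes in $\mathbb{P}^{mn-1}$ yields, for generic parameter values, a $0$-dimensional scheme of length $\binom{m + n - 2}{m - 1}$; by conservation of number the total intersection multiplicity is always at most $\binom{m + n - 2}{m - 1}$. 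Projecting onto the $M_{11}$ coordinate produces a polynomial annihilating $M_{11}$ of degree at most $\binom{m + n - 2}{m - 1}$ over $\mathbb{Q}(a_{ij}, V_i, W_j)$, and $K_{11} = a_{11} M_{11}$ inherits this degree bound. Sinkhorn's $1967$ theorem guarantees that exactly one of the complex intersection points is the positive real Kruithof limit, completing the argument.

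The hard part will be treating degenerate triples $(A, V, W)$ for which the projective intersection of $\Sigma_{m,n}$ with the $m + n - 1$ hyperplanes fails to be proper, for example when several minors of $A$ vanish (as in Corollary~\ref{3 by 3 matrix with row multiples}) or when partial sums of $V$ and $W$ align in a way that creates excess positive-dimensional components on $\Sigma_{m,n}$. Bezout still gives the upper bound in spirit, but extracting a concrete annihilating polynomial of the claimed degree is cleanest via a specialization argument: run the elimination with $a_{ij}$, $V_i$, $W_j$ as symbolic parameters, obtain a polynomial of degree $\le \binom{m + n - 2}{m - 1}$ in $K_{11}$ over $\mathbb{Q}(a, V, W)$, and then specialize. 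A secondary technicality is confirming that no excess intersection occurs at the hyperplane at infinity of $\mathbb{P}^{mn-1}$, which is routine since the affine cone of $\Sigma_{m,n}$ meets each of our hyperplanes properly at the Kruithof limit.
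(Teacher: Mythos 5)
You are attempting to prove a statement that the paper leaves open. Conjecture~\ref{Kruithof limit degree} is supported in the paper only by numerical evidence (a single $4 \times 4$ example yielding a degree-$20$ polynomial), and the authors explicitly list proving even the Sinkhorn-limit special case as an open question in Section~\ref{Open questions}. There is no paper proof to compare against, so a complete argument along your lines would be a genuine advance, and your Segre/Bezout framework is the natural geometric route. The core facts you invoke are all correct: the rank-one condition $M_{ij}=r_i c_j$ places the scaled limit on the affine cone over $\Sigma_{m,n}\cong\mathbb{P}^{m-1}\times\mathbb{P}^{n-1}$, this Segre variety has dimension $m+n-2$ and degree $\binom{m+n-2}{m-1}$, and the row/column sum constraints are $m+n-1$ independent affine-linear conditions once the compatibility relation $\sum V_i=\sum W_j$ retires one of them. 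Eliminating down to $M_{11}$ over the rational function field $\mathbb{Q}(a_{ij},V_i,W_j)$ then gives the stated bound for symbolic parameters, and $K_{11}=a_{11}M_{11}$ has the same degree.

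There are, however, real gaps beyond a small bookkeeping slip. The bookkeeping slip: after homogenizing the marginal constraints with a variable $t$, you should be intersecting the \emph{cone} $C\Sigma_{m,n}\subseteq\mathbb{P}^{mn}$, of dimension $m+n-1$, with $m+n-1$ hyperplanes, not $\Sigma_{m,n}\subseteq\mathbb{P}^{mn-1}$ of dimension $m+n-2$ with $m+n-1$ hyperplanes (the latter would be generically empty). The two issues you flag at the end are the substantive ones and I would not call either of them routine. On the locus at infinity: setting $t=0$, the restricted constraints on $\Sigma_{m,n}$ become bilinear conditions of the form $r_i\bigl(\sum_j a_{ij}c_j\bigr)=0$ and $c_j\bigl(\sum_i a_{ij}r_i\bigr)=0$, which split into unions of components indexed by which factor vanishes; showing that every branch is empty for generic $a_{ij}$ requires a genuine combinatorial dimension count over all these choices. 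On specialization: for specific positive $(A,V,W)$ the symbolic eliminant can vanish identically. The paper's own Corollary~\ref{3 by 3 matrix with row multiples} shows exactly this phenomenon in the Sinkhorn case when a minor of $A$ vanishes, and the authors have to deal with it there by a limiting/continuity argument that tracks a non-vanishing cofactor. Your proof needs an analogous step, not merely a reference to specializing; otherwise the degenerate cases are not covered. Neither gap is fatal, but both must be filled before this counts as a proof of Conjecture~\ref{Kruithof limit degree}.
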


Since the Kruithof limit specializes to the Sinkhorn limit when $V = \begin{bmatrix} 1 & 1 & \cdots & 1 \end{bmatrix}\tr$ and $W = \begin{bmatrix} \frac{m}{n} & \frac{m}{n} & \cdots & \frac{m}{n} \end{bmatrix}$, we suspect that the entries of $\adj_S(m, n)$ generalize in some way to involve entries of $V$ and $W$, and this should give a generalization of Conjecture~\ref{general polynomial} to Kruithof limits.
What are the generalized entries?

We mention that Conjecture~\ref{binomial sum} seems to generalize to Kruithof limits.
For example, the top-left entry $x$ of the Kruithof limit of a general positive $2 \times 3$ matrix $A$ with target row sums $V = \begin{bmatrix} r_1 & r_2 \end{bmatrix}\tr$ and target column sums $W = \begin{bmatrix} c_1 & c_2 & r_1 + r_2 - c_1 - c_2 \end{bmatrix}$ satisfies $f_3 x^3 + f_2 x^2 + f_1 x + f_0 = 0$, where
\begin{align*}
	f_3 &=
		M\!\left(\!\subsetmatrix{
			\{\} & \{2\}& \{2\} \\
			\{\} & \{2\}& \{3\}
		}\!\right) \\
	f_2 &=
		(c_2 - r_1 - r_2) M\!\left(\!\subsetmatrix{
			\{\} & \{2\} \\
			\{\} & \{2\}
		}\!\right)
		- (c_1 + c_2) M\!\left(\!\subsetmatrix{
			\{\} & \{2\} \\
			\{\} & \{3\}
		}\!\right)
		+ (r_2 - c_1) M\!\left(\!\subsetmatrix{
			\{2\} & \{2\} \\
			\{2\} & \{3\}
		}\!\right) \\
	f_1 &=
		c_1 (r_1 + r_2) M\!\left(\!\subsetmatrix{
			\{\} \\
			\{\}
		}\!\right)
		+ c_1 (r_1 - c_2) M\!\left(\!\subsetmatrix{
			\{2\} \\
			\{2\}
		}\!\right)
		+ c_1 (c_1 + c_2 - r_2) M\!\left(\!\subsetmatrix{
			\{2\} \\
			\{3\}
		}\!\right) \\
	f_0 &=
		-r_1 c_1^2 M\!\left(\!\subsetmatrix{
			\vphantom{\{} \\
			\vphantom{\{}
		}\!\right).
\end{align*}
The coefficients of the monomials $M(S)$ in the expressions for $f_k$ satisfy
\begin{multline*}
	x^3
	+ \big(
		(c_2 - r_1 - r_2)
		- (c_1 + c_2)
		+ (r_2 - c_1)
	\big) x^2 \\
	+ \big(
		c_1 (r_1 + r_2)
		+ c_1 (r_1 - c_2)
		+ c_1 (c_1 + c_2 - r_2)
	\big) x
	-r_1 c_1^2 \\
	= (x - r_1) (x - c_1)^2.
\end{multline*}
Additional Gr\"obner basis computations suggest the following.

\begin{conjecture}
With the notation of Conjecture~\ref{Kruithof limit degree}, write $V = \begin{bmatrix} r_1 & r_2 & \cdots & r_m \end{bmatrix}\tr$ and $W = \begin{bmatrix} c_1 & c_2 & \cdots & c_n \end{bmatrix}$ where $r_1 + r_2 + \dots + r_m = c_1 + c_2 + \dots + c_n$.
If the general equation satisfied by the top-left entry $x$ of the Kruithof limit of $A$ with target row sums $V$ and target column sums $W$ is
\[
	\sum_{S \subseteq D(m, n)} c_S(V, W) M(S) x^{\size{S}} = 0,
\]
where the equation is scaled so that $c_{D(m, n)}(V, W) = 1$, then
\[
	\sum_{S \subseteq D(m, n)} c_S(V, W) x^{\size{S}}
	= \left(x - r_1\right)^{\binom{m + n - 3}{n - 1}} \left(x - c_1\right)^{\binom{m + n - 3}{m - 1}}.
\]
In particular, this sum is independent of $r_2, \dots, r_m$ and $c_2, \dots, c_n$.
\end{conjecture}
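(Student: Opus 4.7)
The plan is to reduce the conjectured factorization of $P(x) := \sum_{S \subseteq D(m, n)} c_S(V, W)\, x^{\size{S}}$ to three verifiable claims: (a) the degree of $P$ is $\binom{m+n-2}{m-1}$ with leading coefficient $1$; (b) $r_1$ is a root of $P$ with multiplicity $\binom{m+n-3}{n-1}$; and (c) $c_1$ is a root of $P$ with multiplicity $\binom{m+n-3}{m-1}$. The total degree matches by Pascal's rule, since $\binom{m+n-3}{n-1} + \binom{m+n-3}{m-1} = \binom{m+n-3}{n-1} + \binom{m+n-3}{n-2} = \binom{m+n-2}{n-1} = \binom{m+n-2}{m-1}$, and the leading coefficient is $c_{D(m,n)}(V, W) = 1$ by the normalization convention extending Conjecture~\ref{polynomial form} to the Kruithof setting.

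The key idea for (b) and (c) is a degeneration argument that realizes $P(x)$ as the limit polynomial in a one-parameter family. Fix a positive matrix $A_0$ and set $A_\epsilon$ equal to $A_0$ with its $(1,1)$ entry replaced by $\epsilon^{-1}$. Cofactor expansion along the first row shows that both $\Delta\!\left(\!\begin{smallmatrix} R \\ C \end{smallmatrix}\!\right)(A_\epsilon)$ and $\Gamma\!\left(\!\begin{smallmatrix} R \\ C \end{smallmatrix}\!\right)(A_\epsilon)$ share the leading term $\epsilon^{-1} \det (A_0)_{R, C}$, so their ratio tends to $1$ uniformly in $(R, C)$ as $\epsilon \to 0$. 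Every monomial $M(S)(A_\epsilon)$ therefore has a common leading value $M_0(\epsilon)$, and dividing the polynomial equation $\sum_S c_S(V, W)\, M(S)(A_\epsilon)\, x^{\size{S}} = 0$ by $M_0(\epsilon)$ turns it into the identity $P(x) + O(\epsilon) = 0$. Sending $\epsilon \to 0$ realizes $P(x)$ as the polynomial equation satisfied by the top-left entry in the degenerate Kruithof regime $a_{11} = \infty$, where the Kruithof limit decouples at the corner: weight concentrates at position $(1,1)$ from either row~$1$ or column~$1$ depending on $\min(r_1, c_1)$. Because the polynomial equation can have extraneous roots beyond the actual corner value, both $r_1$ and $c_1$ appear as roots of $P$, as one checks by a direct $2 \times 2$ Kruithof calculation yielding $P(x) = (x - r_1)(x - c_1)$.

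The main obstacle is establishing the correct multiplicities. My plan is to proceed by induction on $m + n$: under the corner degeneration, the residual Kruithof problem on the $(m-1) \times (n-1)$ complementary submatrix has modified target row and column sums, and its polynomial equation contributes a residual factor $P'$ to $P$; this decomposes off one factor $(x - r_1)$ or $(x - c_1)$ per degeneration direction, and iterating along all $\binom{m+n-3}{n-1}$ row-decoupling degenerations and $\binom{m+n-3}{m-1}$ column-decoupling degenerations (matching Pascal's triangle applied to $\binom{m+n-2}{m-1}$) should produce the full factorization. A complementary, likely cleaner, route is first to formulate and prove a Kruithof analogue of Conjecture~\ref{coefficient for size-2 subsets - determinants} expressing $c_S(V, W)$ as a determinant $\det \widetilde{\adj}_{S, \sigma(S)}(V, W)$ with entries linear in the components of $V$ and $W$, then to evaluate $\sum_S \bigl(\det \widetilde{\adj}_{S, \sigma(S)}(V, W)\bigr) x^{\size{S}}$ by exchanging summation with determinant expansion. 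The claim that $P$ is independent of $r_2, \dots, r_m$ and $c_2, \dots, c_n$ --- itself worth proving separately and amounting to a cofactor-expansion identity for $\widetilde{\adj}$ --- would then allow specializing $V$ and $W$ (in a suitable limiting sense) so that only $r_1, r_2, c_1, c_2$ remain nonzero, reducing the general case to the $2 \times 2$ Kruithof calculation and closing the argument.
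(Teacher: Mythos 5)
The statement you set out to prove is stated as a \emph{conjecture} in the paper, supported only by Gr\"obner basis computations; the authors give no proof, so there is nothing of theirs to compare your argument against, and your degeneration idea is a genuinely new angle worth pursuing. As written, though, it has a serious gap at its core. The degeneration $a_{11} \to \infty$ (under the necessary, unstated hypothesis that $A_0$ has no zero minors, since otherwise the ratio $\Delta/\Gamma$ is indeterminate) shows at most that $\min(r_1, c_1)$ is a root of $P$, and nothing more: the limiting $(1,1)$ entry converges to the smaller of the two targets, and the other value of $\{r_1, c_1\}$ simply is not seen by this single degeneration. Citing the $2 \times 2$ computation $P(x) = (x-r_1)(x-c_1)$ does not lift to general $m, n$; you would need, e.g., a row--column transposition symmetry of the coefficients $c_S(V, W)$ or a continuity-in-$(V,W)$ argument crossing the locus $r_1 = c_1$, and neither is actually made.

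The multiplicity claim is the heart of the conjecture and is where the proposal is weakest. You assert that iterating the degeneration along $\binom{m+n-3}{n-1}$ row-decoupling and $\binom{m+n-3}{m-1}$ column-decoupling directions ``should'' produce the full factorization, but no reason is given why distinct degenerations peel off distinct linear factors of $P$ rather than repeatedly rediscovering the same root. Multiplicity of a root is not accumulated merely by exhibiting that root along several limits; one must prove the degenerations are independent in a precise (e.g.\ transversal) sense, and that is exactly the hard step the sketch omits. The Pascal arithmetic makes the numerology consistent but is not evidence of multiplicity. The ``cleaner route'' you mention presupposes a Kruithof determinant formula whose sign alterations $\sigma(S)$ the paper itself could not identify, so it trades one open conjecture for another. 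One bookkeeping point you should flag: the paper's Sinkhorn normalization $c_{\{\}}(m,n) = 1$ is incompatible with the conjectured $P(0) = (-r_1)^{\binom{m+n-3}{n-1}}(-c_1)^{\binom{m+n-3}{m-1}}$, so the Kruithof normalization in force must be $c_{D(m,n)}(V,W) = 1$, which is what you silently assume; say so explicitly.
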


\end{document}